\setlist[enumerate,1]{font=\upshape}
\setlist{nosep}
\newtheorem{theorem}{Theorem}[section]
\newtheorem{proposition}[theorem]{Proposition}
\newtheorem{lemma}[theorem]{Lemma}
\newtheorem{corollary}[theorem]{Corollary}
\newtheorem{remark}[theorem]{Remark}
\newtheorem{claim}[theorem]{Claim}
\theoremstyle{definition}
\newtheorem{definition}[theorem]{Definition}
\numberwithin{equation}{section}
\title[Variational relations of topological pressure]{Variational relations of topological pressure for nonautonomous dynamical systems}
\author{Chang-Bing Li}
\address{Chang-Bing Li: School of Mathematics and Statistics, Hainan Normal University, Haikou, Hainan 571158, P. R. China \&
Department of Mathematics, Shantou University, Shantou, Guangdong 515063, P. R. China}
\email{christiesyp@gmail.com}
\subjclass[2020]{Primary: 37A35, 37B40, 37B55.}
\keywords{Topological entropy, topological pressure, packing topological pressure, measure-theoretic pressure, nonautonomous dynamical system}
\date{\today}
\begin{document}
\begin{abstract}
This manuscript investigates the relationship between various notions of topological pressures and their corresponding measure-theoretic pressures for nonautonomous dynamical systems, using the framework of the Carath{\'e}odory-Pesin structure. 
We establish a pressure distribution principle for the Pesin topological pressure and prove a Billingsley type theorem for the packing topological pressure. 
Based on these results, we derive a variational principle for packing topological pressure of nonautonomous dynamical systems, revealing a variational connection between packing pressure and the measure-theoretic upper local pressure. 
Additionally, we explore the applicability of our findings to a typical nonautonomous dynamical system in which the sequence of continuous selfmaps preserves the same Borel probability measures. 
Finally, we obtain an upper bound for the packing topological pressure on the set of generic points.
\end{abstract}

\maketitle

\section{Introduction}
The topological entropy and the measure-theoretic entropy are two fundamental invariants in the study of dynamical systems.
Topological entropy quantifies the overall exponential complexity of the orbit structure, whereas the measure-theoretic entropy characterizes the information growth rate of evolution.
These entropies play a central role in dynamical systems theory.
A notable link between the topological entropy and measure-theoretic entropy is provided by the celebrated variational principle, which asserts that the topological entropy equals the supremum of the measure-theoretic entropy taken over all invariant Borel probability measures.
The topological pressure, as a natural extension of the topological entropy, is another significant concept in dynamical systems, with deep connections to thermodynamic formalism and dimension theory.
An analogous variational principle has been established, relating topological pressures and measure-theoretic entropies \cite{Wal1975,Wal2000a}.

In 1973, Bowen \cite{Bow1973} introduced a notion of the topological entropy for noncompact subsets by employing a method analogous to Hausdorff dimension.
Since then, the earlier notion of topological entropy, defined via spanning sets or separated sets, has been referred to as the classical topological entropy, whereas Bowen's novel entropy is commonly known in the literature as the Bowen topological entropy.
Pesin and Pitskel' \cite{PP1984} extended Bowen's approach to topological pressure and established a variational principle for topological pressures.
Moreover, they showed that the Pesin topological pressure coincides with the Bowen topological entropy when the potential function is identically zero.
Later, Pesin \cite{Pes1997} systematically summarized these theories and developed the well-known Carath{\'e}odory-Pesin structure.
This theory has inspired a great deal of subsequent research; accordingly, we provide a brief overview of the most relevant results, to the best of our knowledge.

Takens and Verbitskiy \cite{TV2003} established a variational principle for the Pesin topological entropy on level sets.
Feng and Huang \cite{FH2012} introduced the concept of weighted topological entropy and proved a variational principle for both Bowen topological entropy and packing topological entropy on subsets, involving the measure-theoretic lower and upper entropies.
Subsequently, Tang et al. \cite{TCZ2015} employed Feng-Huang's method to study the weighted topological pressure and derived a corresponding variational principle for Pesin topological pressure.
Feng and Huang \cite{FH2016} further investigated a variational principle for weighted topological pressure, which subsequently inspired the theory of mean Hausdorff dimension and mean dimension \cite{Tsu2023}.
Using the framework of Carath{\'e}odory-Pesin structures, Shen et al.~\cite{SXZ2020} introduced the $\mathbf{a}$-weighted topological entropy of a flow on noncompact sets and the $\mathbf{a}$-weighted measure-theoretic entropy, and further established a related variational principle.
Wang \cite{Wan2021b} examined two new variational principles for Bowen topological entropy and packing topological entropy by defining four measure-theoretic quantities associated with Borel probability measures, which characterize the upper and lower Katok measure-theoretic entropies.
Recently, Zhong and Chen \cite{ZC2023} studied multiple notions of topological pressures and measure-theoretic pressures and obtained two variational principles for Pesin topological pressure and packing topological pressure.
These variational principles relate to the measure-theoretic lower and upper local pressure, as well as the Pesin and packing pressure of Borel measures.

The topological entropy for nonautonomous dynamical systems (NDS) was originally studied by Kolyada and Snoha \cite{KS1996}.
Building on this, Zhu et al. \cite{ZLXZ2012} investigated both the topological entropy and a specific type of measure-theoretic entropy for NDS, deriving an inequality relating these two notions.
Kawan \cite{Kaw2014} further extended such results by showing that, in more general systems, the measure-theoretic entropy can be bounded above by its topological entropy.
Meanwhile, Liu et al.~\cite{LQX2020} studied the topological entropy of an NDS and its connection to the entropy of its induced system.
Subsequently, Huang et al. \cite{HWZ2008} and Kong et al. \cite{KCL2015} delved into the topological pressure of NDS and its fundamental properties in greater depth.
Yang and Guo \cite{YG2015} adopted the approach outlined in \cite{ZLXZ2012} to establish a variational principle for the classical topological pressure.
However, due to the intrinsic characteristics of NDS, this result falls short of the expected variational principle for classical topological pressure.
Li \cite{Li2015} presented a systematic overview of the relationships among classical topological entropy, Bowen topological entropy and Pesin topological entropy for NDS.
In a comprehensive study, Ju and Yang \cite{JY2021} examined classical properties of the Pesin topological entropy for NDS.
Yang \cite{Yan2021} introduced the notions of upper and lower Katok measure-theoretic entropies as well as measure-theoretic entropy, and established a variational principle for Bowen topological entropy.
Later, Li and Ye \cite{LY2023} further analyzed the interplay among distance entropy, Bowen topological entropy, Pesin topological entropy, classical topological entropy and Hausdorff dimension.
Moreover, they established a sufficient condition under which these notions of topological entropy are equivalent.
By employing the approach outlined in Feng and Huang \cite{FH2012}, Zhang and Zhu~\cite{ZZ2023} and Sarkooh \cite{Naz2024} demonstrated a variational principle for packing topological entropy and Bowen topological pressure, respectively.
Recently, Wang et al.~\cite{WYZ2025a} disproved the variational principle for classical topological entropy in the setting in which the sequence of continuous selfmaps preserves the same measure.

The aforementioned studies have led to several natural questions regarding the relationships among various notions of topological pressure, and the variational connections between packing topological pressure and measure-theoretic pressure on subsets of NDS.
To tackle these issues, various types of topological pressures have been investigated, including Pesin topological pressure, packing topological pressure, and the lower and upper capacity topological pressure, along with their corresponding measure-theoretic pressures.
In contrast to classical autonomous systems, the topological properties of NDS depend on a sequence of maps rather than a single map, and the finite sum of potential functions generally fails to satisfy the additive condition.
As a consequence, many well-established results from autonomous dynamics, such as the Birkhoff ergodic theorem and the Shannon-McMillan-Breiman theorem, do not readily apply to NDS.
On a related note, nonautonomous iterated function systems (NIFS for short) generalize both finitely generated semigroups and NDS by allowing the set of transformations to vary with time \cite{GS2019,CL2023c,SLZ2024,JLY2024}.
The Pesin topological pressure and the associated variational principle for NIFS have also been studied extensively \cite{CL2023c,JLY2024}.
Nevertheless, this manuscript focuses on extending the notion of packing topological pressure and its variational principle to NDS.
As demonstrated in Theorem \ref{firsttheorem} below, the packing topological pressure is not, in general, equivalent to the Pesin topological pressure for NDS.
In accordance with the established framework of geometric measure theory, the forms of the Billingsley type theorem and the variational principle for packing topological pressure for NIFS are expected to coincide with those presented in this manuscript.
Moreover, it is anticipated that the results of this study can be extended to NIFS with relative ease, without encountering significant technical obstacles.

Throughout this paper, let $X$ be a compact metric space, $f_{1,\infty}$ a sequence of continuous selfmaps of $X$, $K$ a nonempty subset of $X$, and $\varphi$ a continuous function on $X$.
Then $(X,f_{1,\infty})$ forms a nonautonomous dynamical system (NDS for short), see the next section for detailed definitions.
We denote by $P(f_{1,\infty},K,\varphi)$, $\underline{CP}(f_{1,\infty},K,\varphi)$, $\overline{CP}(f_{1,\infty},K,\varphi)$, $P^B(f_{1,\infty},K,\varphi)$ and $P^P(f_{1,\infty},K,\varphi)$ the classical topological pressure, lower capacity pressure, upper capacity pressure, Pesin topological pressure and packing topological pressure of $\varphi$ on $K$ with respect to $f_{1,\infty}$, respectively.
Then the following \Cref{firsttheorem} (see also \Cref{relationship}) reveals the relationships among these notions of topological pressure.

\begin{theorem}\label{firsttheorem}
Let $(X,f_{1,\infty})$ be an NDS, $K$ a nonempty subset of $X$ and $\varphi\in C(X,\mathbb{R})$.
Then
\begin{enumerate}
\item[(1)] $P^B(f_{1,\infty},K,\varphi)\leq \underline{CP}(f_{1,\infty},K,\varphi)\leq \overline{CP}(f_{1,\infty},K,\varphi)$;

\item[(2)] $P^B(f_{1,\infty},K,\varphi)\leq P^P(f_{1,\infty},K,\varphi)\leq P(f_{1,\infty},K,\varphi)$;

\item[(3)] $P^P(f_{1,\infty},K,\varphi)\leq \overline{CP}(f_{1,\infty},K,\varphi)$;

\item[(4)] $\overline{CP}(f_{1,\infty},K,\varphi)=P(f_{1,\infty},K,\varphi)$.
\end{enumerate}
\end{theorem}

The immediate distribution principle for Pesin topological pressure (see also \Cref{distribution}) provides a useful method for deriving lower bounds of the Pesin topological pressure.
As illustrated in (2) of \Cref{firsttheorem}, these conditions are also applicable to the packing topological pressure.

\begin{theorem}
Let $(X,f_{1,\infty})$ be an NDS, $K$ a nonempty subset of $X$, $\varphi\in C(X,\mathbb{R})$, and $\mathcal{M}(X)$ the set of all Borel probability measures on $X$.
Suppose that there exist $\varepsilon>0$ and $s\geq 0$ such that one can find a sequence of Borel probability measures $\{\mu_k\} \subset \mathcal{M}(X)$, a constant $C>0$ and an integer $n\in \mathbb{N}$ satisfying the following conditions:
\begin{enumerate}
\item[(1)] $\mu_k(K)>0$ for all $k$;
\item[(2)] $\limsup_{k \to\infty} \mu_k(B_n(x,\varepsilon))\leq C\exp(-ns+S_n\varphi(x))$ for each Bowen ball $B_n(x,\varepsilon)$ such that $B_n(x,\varepsilon)\cap K\neq\emptyset$;
\item[(3)] the sequence $\{\mu_k\}$ admits at least one limit measure $\mu$ for which $\mu(K)>0$.
\end{enumerate}
Then $P^B(f_{1,\infty},K,\varphi)\geq s$.
\end{theorem}

For any $\mu\in \mathcal{M}(X)$, we denote by $\overline{P}_{\mu}(f_{1,\infty},x,\varphi)$ the upper measure-theoretic pressure of $x\in X$ with respect to $\mu$ and $\varphi$, and by $P^P(f_{1,\infty},K,\varphi)$ the packing pressure of $\mu$ with respect to $\varphi$, respectively.
Subsequently, the Vitali covering lemma (see also \Cref{vitali}) allows us to derive a Billingsley type theorem for packing topological pressure (see also \Cref{Billingsley}), which plays a crucial role in establishing the variational principle for packing topological pressure.

\begin{theorem}
Let $(X,f_{1,\infty})$ be an NDS, $K$ a nonempty subset of $X$ and $\varphi\in C(X,\mathbb{R})$. Then
\begin{enumerate}
 \item[(1)] if $\overline{P}_{\mu}(f_{1,\infty},x,\varphi)\leq s$ for all $x\in K$, then $P^P(f_{1,\infty},K,\varphi)\leq s$;

 \item[(2)] if $\overline{P}_{\mu}(f_{1,\infty},x,\varphi)\geq s$ for all $x\in K$ and $\mu(K)>0$, then $P^P(f_{1,\infty},K,\varphi)\geq s$.
 \end{enumerate}
\end{theorem}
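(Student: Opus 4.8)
The plan is to run the Carath\'eodory--Pesin construction for the packing pressure directly and compare it with the upper measure-theoretic pressure scale by scale. Recall that $P^{P}(f_{1,\infty},K,\varphi)$ is built from the pre-pressures $P(f_{1,\infty},Z,\varphi,s,\varepsilon)$ associated with (finite or countable) \emph{disjoint} families of Bowen balls $\{B_{n_{i}}(x_{i},\varepsilon)\}$ with $x_{i}\in Z$ and $\min_{i}n_{i}\to\infty$, then from an infimum over countable covers $Z\subseteq\bigcup_{j}Z_{j}$ (the packing measure), and finally from a limit $\varepsilon\to0$ taken \emph{outside} that infimum. Writing $\overline{P}_{\mu}(f_{1,\infty},x,\varphi,\varepsilon)=\limsup_{n\to\infty}\frac{1}{n}\bigl(-\log\mu(B_{n}(x,\varepsilon))+S_{n}\varphi(x)\bigr)$, both $\varepsilon\mapsto\overline{P}_{\mu}(f_{1,\infty},x,\varphi,\varepsilon)$ and $\varepsilon\mapsto P^{P}(f_{1,\infty},K,\varphi,\varepsilon)$ are nonincreasing, with suprema $\overline{P}_{\mu}(f_{1,\infty},x,\varphi)$ and $P^{P}(f_{1,\infty},K,\varphi)$; hence it suffices to compare the two quantities at a fixed scale $\varepsilon$ and then let $\varepsilon\to0$.

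For (1), fix $\varepsilon>0$ and exponents $s''>s'>s$. Since $\overline{P}_{\mu}(f_{1,\infty},x,\varphi,\varepsilon)\le s<s'$ for every $x\in K$, the set $K$ is the countable union of
\[
K_{l}=\bigl\{x\in K:\ \mu(B_{n}(x,\varepsilon))\ge\exp(-s'n+S_{n}\varphi(x))\ \text{for all }n\ge l\bigr\},\qquad l\in\mathbb{N}.
\]
For any disjoint family $\{B_{n_{i}}(x_{i},\varepsilon)\}$ with $x_{i}\in K_{l}$ and $n_{i}\ge N\ge l$,
\[
\sum_{i}\exp(-s''n_{i}+S_{n_{i}}\varphi(x_{i}))\le\exp(-(s''-s')N)\sum_{i}\mu(B_{n_{i}}(x_{i},\varepsilon))\le\exp(-(s''-s')N),
\]
so the pre-pressure $P(f_{1,\infty},K_{l},\varphi,s'',\varepsilon)$---the $N\to\infty$ limit of the left-hand side---is $0$. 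Feeding the cover $K=\bigcup_{l}K_{l}$ into the defining infimum, the packing measure of $K$ at exponent $s''$ and scale $\varepsilon$ vanishes, hence $P^{P}(f_{1,\infty},K,\varphi,\varepsilon)\le s''$; letting $s''\downarrow s$ and then $\varepsilon\to0$ gives (1). The gap $s''>s'$ is essential here: without it one only gets pre-pressure $\le1$ on each $K_{l}$, which is useless after summing over $l$.

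For (2), fix $s'<s$ and, for $m\in\mathbb{N}$, put $K^{(m)}=\{x\in K:\overline{P}_{\mu}(f_{1,\infty},x,\varphi,1/m)>s'\}$. Since $\overline{P}_{\mu}(f_{1,\infty},x,\varphi)\ge s>s'$ for every $x\in K$, these sets cover $K$, so by countable subadditivity of the outer measure $\mu^{*}$ we may fix $m$ with $\mu^{*}(K^{(m)})>0$. The core of the argument is the estimate: if $Z$ is any set with $\overline{P}_{\mu}(f_{1,\infty},x,\varphi,1/m)>s'$ for all $x\in Z$, then $P(f_{1,\infty},Z,\varphi,s',1/m)\ge\mu^{*}(Z)$. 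To prove it, fix $N$; for every $x\in Z$ there are arbitrarily large $n\ge N$ with $\mu(B_{n}(x,1/m))<\exp(-s'n+S_{n}\varphi(x))$, so these Bowen balls form a Vitali (fine) cover of $Z$, and by \Cref{vitali} we extract a countable pairwise disjoint subfamily $\{B_{n_{i}}(x_{i},1/m)\}$ covering $\mu$-almost all of $Z$. Then, using disjointness and $n_{i}\ge N$,
\[
\sum_{i}\exp(-s'n_{i}+S_{n_{i}}\varphi(x_{i}))>\sum_{i}\mu(B_{n_{i}}(x_{i},1/m))=\mu\Bigl(\bigcup_{i}B_{n_{i}}(x_{i},1/m)\Bigr)\ge\mu^{*}(Z),
\]
and as this family is an admissible packing, $P(f_{1,\infty},Z,\varphi,s',N,1/m)>\mu^{*}(Z)$ for every $N$; letting $N\to\infty$ gives the claim. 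Finally, for any countable cover $K^{(m)}\subseteq\bigcup_{j}Z_{j}$ (we may assume $Z_{j}\subseteq K^{(m)}$) the estimate applied to each $Z_{j}$ together with countable subadditivity of $\mu^{*}$ yields $\sum_{j}P(f_{1,\infty},Z_{j},\varphi,s',1/m)\ge\sum_{j}\mu^{*}(Z_{j})\ge\mu^{*}(K^{(m)})>0$, so the packing measure of $K^{(m)}$ at exponent $s'$ and scale $1/m$ is positive; hence $P^{P}(f_{1,\infty},K,\varphi,1/m)\ge P^{P}(f_{1,\infty},K^{(m)},\varphi,1/m)\ge s'$, whence $P^{P}(f_{1,\infty},K,\varphi)\ge s'$, and letting $s'\uparrow s$ gives (2).

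The step I expect to be the main obstacle is the estimate inside (2). Because $S_{n}\varphi$ is not additive along orbits of an NDS and neither the Birkhoff ergodic theorem nor the Shannon--McMillan--Breiman theorem is available, there is no ergodic-theoretic shortcut to the local pressures: the whole argument rests on \Cref{vitali} together with careful outer-measure bookkeeping, since $K^{(m)}$ and the cover pieces need not be $\mu$-measurable. Matching scales is also a point to watch---\Cref{vitali} typically returns disjoint balls covering $Z$ only up to a fixed overlap constant, so the radius at which the covering estimate holds and the radius used in the packing must be reconciled, and one has to confirm the pre-pressures transform correctly under this adjustment. The remaining ingredients---monotonicity in $\varepsilon$, the geometric-gap trick in (1), and subadditivity of $\mu^{*}$---are routine.
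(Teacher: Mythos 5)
Your overall strategy is the same as the paper's: decompose $K$ into countably many pieces on which the local estimate holds uniformly from some index on, exploit a gap between exponents in part (1), and use the covering lemma in part (2). Part (1) is essentially identical to the paper's argument (your $s'=s+r$, $s''=s+2r$) and is correct, including the observation that the exponent gap is what makes the pre-pressure vanish rather than merely stay bounded. Your treatment of the countable-cover infimum in part (2) via subadditivity of $\mu^*$ is, if anything, more careful than the paper's.

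There is, however, a genuine gap at the Vitali step in part (2), precisely at the point you flagged as ``to watch.'' \Cref{vitali} is the $5r$-covering lemma: from the fine cover it extracts a pairwise disjoint subfamily $\{B(x_i,r_i)\}_{i\in\mathcal{I}'}$ such that the \emph{five-fold enlargements} $B(x_i,5r_i)$ cover $\bigcup_{B\in\mathcal{B}}B$. It does \emph{not} produce a disjoint subfamily covering $\mu$-almost all of $Z$ (that stronger Vitali covering theorem requires a doubling/Besicovitch structure that is unavailable for an arbitrary Borel measure on a compact metric space, and a fortiori for Bowen balls of varying order). Consequently your chain
$\sum_i\mu(B_{n_i}(x_i,1/m))=\mu\bigl(\bigcup_i B_{n_i}(x_i,1/m)\bigr)\ge\mu^*(Z)$
is unjustified: the selected disjoint balls may cover only a small portion of $Z$, and nothing forces their union to contain $Z$ up to a null set. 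The repair, which is what the paper does, is to run the argument at two radii: take the fine cover by balls $\overline{B}_{n_i}(x_i,\varepsilon/5)$, extract the disjoint subfamily at radius $\varepsilon/5$ (this is the admissible packing), observe that the radius-$\varepsilon$ enlargements cover $Z$, and use the measure decay at the \emph{larger} radius, $\mu(B_n(x,\varepsilon))\le e^{-s'n+S_n\varphi(x)}$, to get $\mu^*(Z)\le\sum_i\mu(B_{n_i}(x_i,\varepsilon))\le\sum_i e^{-s'n_i+S_{n_i}\varphi(x_i)}$. This yields a positive lower bound for the packing pre-pressure at scale $\varepsilon/5$, which suffices since $\varepsilon/5\to0$ with $\varepsilon$; one only has to define $K^{(m)}$ so that the decay estimate is available at the radius $\varepsilon$ used for the enlargements (monotonicity of $\varepsilon\mapsto\overline{P}_{\mu}(f_{1,\infty},x,\varphi,\varepsilon)$ makes this harmless). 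Until that two-radius bookkeeping is inserted, part (2) of your proof is incomplete.
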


We denote by $\overline{P}_{\mu}(f_{1,\infty},\varphi)$ the upper measure-theoretic pressure of $\mu$ with respect to $\varphi$ on $X$, which is the integral of $\overline{P}_{\mu}(f_{1,\infty},x,\varphi)$ on $X$.
The following \Cref{main1} (see also \Cref{main}) provides a variational principle for the packing topological pressure of NDSs, which is one of the main results in this manuscript.
Moreover, this theorem extends the results of \cite{ZZ2023} for the packing topological entropy of NDSs and \cite{ZC2023} for the packing entropy of autonomous dynamical systems.

\begin{theorem}\label{main1}
Let $(X,f_{1,\infty})$ be an NDS and $\varphi$ a continuous function on $X$.
If $K$ is a nonempty compact subset of $X$ and $P^P(f_{1,\infty},K,\varphi)\geq \|\varphi\|_{\infty}:=\sup_{x\in X}|\varphi(x)|$, then
\begin{align*}
P^P(f_{1,\infty},K,\varphi)
&=\sup\{\overline{P}_{\mu}(f_{1,\infty},K,\varphi):\mu\in \mathcal{M}(X),\mu(K)=1 \}\\
&=\sup\{P_{\mu}^P(f_{1,\infty},\varphi):\mu\in \mathcal{M}(X),\mu(K)=1 \}.
\end{align*}
\end{theorem}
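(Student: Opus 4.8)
The plan is to establish the two equalities separately, each by a pair of inequalities. The ``$\leq$'' directions in both will be essentially free from the machinery already in place: given any $\mu\in\mathcal{M}(X)$ with $\mu(K)=1$, we have $\overline{P}_\mu(f_{1,\infty},x,\varphi)\leq \overline{P}_\mu(f_{1,\infty},x,\varphi)$ trivially, and by part~(1) of the Billingsley type theorem, if $\overline{P}_\mu(f_{1,\infty},x,\varphi)\leq s$ on a full-measure subset then $P^P(f_{1,\infty},K,\varphi)\leq s$. Combined with the (routine) fact that $P^P_\mu(f_{1,\infty},\varphi)$ and $\overline{P}_\mu(f_{1,\infty},K,\varphi)$ dominate the ``essential supremum'' of $\overline{P}_\mu(f_{1,\infty},x,\varphi)$ over $x$, this yields that the packing pressure is at least both suprema on the right-hand side only after we also prove the reverse; so more precisely, for the ``$\geq$'' direction of the suprema I will show $P^P(f_{1,\infty},K,\varphi)\geq P^P_\mu(f_{1,\infty},\varphi)$ and $P^P(f_{1,\infty},K,\varphi)\geq\overline{P}_\mu(f_{1,\infty},K,\varphi)$ for every admissible $\mu$, which follows by combining the Billingsley theorem part~(2) with a decomposition of $K$ into countably many pieces on which $\overline{P}_\mu(f_{1,\infty},x,\varphi)$ is nearly constant, together with the countable stability of packing pressure.

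The heart of the matter is the ``$\leq$'' direction of the first equality, i.e.\ constructing, for any $t<P^P(f_{1,\infty},K,\varphi)$, a measure $\mu\in\mathcal{M}(X)$ with $\mu(K)=1$ and $\overline{P}_\mu(f_{1,\infty},K,\varphi)\geq t$. Here I would follow the Feng--Huang / Zhong--Chen strategy adapted to the nonautonomous setting. First, since $P^P=\overline{CP}=P$ on many configurations but in general $P^P$ is the packing pressure, I use the definition of packing pressure as an infimum over countable covers of $K$ by sets of the form $\overline{P}^P(f_{1,\infty},\cdot,\varphi)$-measurable pieces: because $t<P^P(f_{1,\infty},K,\varphi)$, every countable partition of $K$ contains a piece $E$ whose packing-pre-pressure (the ``$\widetilde{P}$'' quantity) exceeds $t$, and one passes to such a piece with $\overline{\mathrm{diam}}$ as small as desired. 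Iterating this, one builds a nested sequence of compact subsets $K\supseteq K_1\supseteq K_2\supseteq\cdots$ together with finite families of disjoint Bowen balls $B_{n_i}(x,\varepsilon_i)$ at rapidly increasing scales $n_i$, with carefully chosen weights $\exp(-n_i t + S_{n_i}\varphi(x))$ summing to roughly $1$ at each level, so that the resulting mass distribution is consistent. The weak-$*$ limit $\mu$ of the natural normalized measures on these Bowen-ball families is supported on $\bigcap_i K_i\subseteq K$, and the construction forces, for $\mu$-a.e.\ $x$,
\begin{align*}
\limsup_{n\to\infty}\frac{-\log\mu(B_n(x,\varepsilon)) + S_n\varphi(x)}{n}\geq t,
\end{align*}
which is exactly $\overline{P}_\mu(f_{1,\infty},x,\varphi)\geq t$ for all sufficiently small $\varepsilon$; integrating gives $\overline{P}_\mu(f_{1,\infty},K,\varphi)\geq t$.

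The main obstacle, and where the nonautonomous structure genuinely bites, is this last mass-estimate step. In the autonomous case one freely uses $S_{m+n}\varphi(x)=S_m\varphi(x)+S_n\varphi(f^m x)$ to control the potential sums across the different scales $n_i$, but for NDS $S_{n}\varphi$ is $\sum_{k=0}^{n-1}\varphi(f_{1}^{(k)}x)$ with $f_1^{(k)}=f_k\circ\cdots\circ f_1$, and there is no cocycle identity compatible with restarting the orbit. I will instead exploit only the uniform continuity of $\varphi$ together with the fact that on a Bowen ball $B_{n}(x,\varepsilon)$ the sums $S_n\varphi$ vary by at most $n\cdot\rho(\varepsilon)$ with $\rho(\varepsilon)\to0$; this ``almost additivity with controlled error'' is enough to make the telescoping of weights across levels work provided the scales $n_i$ grow fast enough (so that $n_{i-1}/n_i\to 0$) and the $\varepsilon_i$ shrink. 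A second, more technical, obstacle is ensuring the constructed $\mu$ actually assigns no mass outside $K$ and that the $\limsup$ bound holds on a set of full $\mu$-measure rather than merely positive measure; this is handled by a Borel--Cantelli argument over the levels, again using the rapid growth of $n_i$. Once these estimates are in place, combining with the already-proven Billingsley type theorem (part~(2)) closes the variational principle, and the chain of inequalities between the two suprema and $P^P$ collapses to equality.
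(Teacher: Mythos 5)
Your proposal takes essentially the same route as the paper: the direction $P^P(f_{1,\infty},K,\varphi)\geq P_\mu^P(f_{1,\infty},\varphi)\geq\overline{P}_{\mu}(f_{1,\infty},K,\varphi)$ follows from the definition of $P_\mu^P$ together with part~(2) of \Cref{Billingsley} (this is \Cref{pmupp}), and the reverse direction is the Feng--Huang-style Moran construction of a measure supported on a compact subset of $K$ obeying the mass estimate $\overline{\mu}(B_{m_i(x)}(z,\varepsilon))\leq C\,e^{-m_i(x)s+S_{m_i(x)}\varphi(x)}$, which is exactly \Cref{upperbound}. The only caveats are minor: your appeal to part~(1) of \Cref{Billingsley} on a full-measure subset is incorrect as stated (packing pressure is not controlled by full-measure subsets) but never actually used, and the almost-additivity and Borel--Cantelli devices you anticipate are unnecessary, since the construction requires only level-by-level consistency of the weights and the resulting mass estimate holds at every point of the limit set.
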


This paper is organized as follows. The second section introduces preliminaries on topological pressures on subsets in nonautonomous dynamical systems (NDS). 
It also discusses measure-theoretic pressures and the associated Pesin, packing pressure, as well as the lower and upper capacity pressures associated with a given Borel probability measure. 
The third section provides a comprehensive analysis of the relationship among classical topological pressure, Pesin topological pressure, packing topological pressure, lower and upper topological pressures, and their corresponding measure-theoretic pressures. 
The fourth section constitutes the primary body of this manuscript, focusing on a Billingsley type theorem for Pesin (packing) topological pressure and a variational principle for packing topological pressure. 
The final section explores the applicability of these results to nonautonomous measure-preserving systems wherein the sequence of transformations preserves the same Borel probability measure. 
Additionally, an upper bound for the packing topological pressure on the set of generic points is derived.

\section{Various pressures on subsets}
Throughout this paper, let $X$ be a compact topological space, equipped with a sequence of continuous selfmaps $f_{1,\infty}=\{f_i\}_{i=1}^{\infty}$ (we do not assume that the maps $f_i$ are invertible).
For any $i,j\in\mathbb{N}$, let $f_i^0=\textnormal{id}_X$ be the identity map of $X$, and
\[
f_i^j:=f_{i+(j-1)}\circ\dots\circ f_{i+1}\circ f_i,\quad f_i^{-j}:=\left(f_i^j\right)^{-1}=f_i^{-1}\circ f_{i+1}^{-1}\circ\dots\circ f_{i+(j-1)}^{-1}.
\]
Then we call such system a \emph{nonautonomous dynamical system} (NDS for short), and denote it by $(X,f_{1,\infty})$. For any $x\in X$, the \emph{trajectory} of $x\in X$ is the sequence $\{x_n\}_{n=0}^{\infty}$, where we set $x_0=x$, and $x_{n+1}=f_{n+1}(x_n)$, that is, $x_n=f_1^n(x) $ for $n\geq 0$. The \emph{orbit of $x$} is the set
\[
Orb(x,f_{1,\infty})=\{x,f_1(x),f_1^2(x),\dots\}.
\]
We also denote by $f_{1,\infty}^n$ the sequence $\{f_{in+1}^n\}_{i=0}^{\infty}$ and by $f_{1,\infty}^{-1}$ the sequence $\{f_i^{-1}\}_{i=1}^{\infty}$ \cite{KS1996}.

Let $(X,d)$ be a compact metric space, $K$ a nonempty subset of $X$, and $f_{1,\infty}$ a sequence of continuous selfmaps of $X$. We define a new metric $d_n$ on $X$ by
\[
d_n(x,y)=\max_{0\leq i\leq n-1}d\left(f_1^i(x),f_1^i(y)\right).
\]
For any $x\in X$ and $\varepsilon>0$, the open and closed Bowen balls are given by
\[
B_n(x,\varepsilon)=\{y\in X:d_n(x,y)<\varepsilon \}\quad \text{and}\quad
\overline{B}_n(x,\varepsilon)=\{y\in X:d_n(x,y)\leq \varepsilon \},
\]
respectively. For any $\varepsilon>0$, a subset $E\subseteq K$ is called $(n,\varepsilon)$-separated if for any two distinct points $x,y\in E$, $d_n(x,y)>\varepsilon$.
A subset $F$ of $X$ is an $(n,\varepsilon)$-spanning set for $K$, or that it $(n,\varepsilon)$-spans $K\subseteq X$ if for each $x\in K$ there exists $y\in F$ such that $d_n(x,y)\leq \varepsilon$.
Let $s_n(f_{1,\infty},K,\varepsilon)$ be the maximal cardinality of an $(n,\varepsilon)$-separated set in $K$, and $r_n(f_{1,\infty},K,\varepsilon)$ the minimal cardinality of a set in $K$ which $(n,\varepsilon)$-spans $K$ \cite{KS1996}.

\subsection{Classical topological pressure}
Given an NDS $(X,f_{1,\infty})$ on the compact metric space $(X,d)$.
For any potential function $\varphi\in C(X,\mathbb{R})$ and $x\in X$, denote by $S_n\varphi(x)=\sum_{i=0}^{n-1}\varphi(f_1^i(x))$. For any $\varepsilon>0$, put
\begin{align*}
Q_n(f_{1,\infty},K,\varphi,\varepsilon)&=\inf\left\{\sum_{x\in F}\exp(S_n\varphi(x)): F \text{ is an }(n,\varepsilon)\text{-spanning set for }K \right\},\\
P_n(f_{1,\infty},K,\varphi,\varepsilon)&=\sup\left\{\sum_{x\in E}\exp(S_n\varphi(x)): E \text{ is an }(n,\varepsilon)\text{-separated set of }K \right\}.
\end{align*}
Then the following two limits exist:
\begin{align}
Q(f_{1,\infty},K,\varphi,\varepsilon)&=\limsup_{n\to\infty}\frac{1}{n}\log Q_n(f_{1,\infty},K,\varphi,\varepsilon), \label{Qn}\\
P(f_{1,\infty},K,\varphi,\varepsilon)&=\limsup_{n\to\infty}\frac{1}{n}\log P_n(f_{1,\infty},K,\varphi,\varepsilon).\nonumber
\end{align}

\begin{definition} \label{classicdefinition}
The \emph{classical topological pressure of $\varphi$ on the set $K$ with respect to the sequence $f_{1,\infty}$} is given by
\begin{equation}\label{classicalpressure}
P(f_{1,\infty},K,\varphi)=\lim_{\varepsilon\to 0}Q(f_{1,\infty},K,\varphi,\varepsilon)=\lim_{\varepsilon\to 0}P(f_{1,\infty},K,\varphi,\varepsilon).
\end{equation}
In particular, when $\varphi=0$, the classical topological pressure of $f_{1,\infty}$ on the set $K$ reduces to the classical topological entropy $h(f_{1,\infty},K)$, i.e.,
\[
h(f_{1,\infty},K)=P(f_{1,\infty},K,0).
\]
\end{definition}

The classical topological pressure can also be defined by open covers.
For open covers $\mathcal{U}_1, \mathcal{U}_2,\dots,\mathcal{U}_n$ of $X$, we denote
\[
\bigvee_{i=1}^n \mathcal{U}_i=\mathcal{U}_1 \vee\mathcal{U}_2 \vee\dots \vee\mathcal{U}_n=\{U_1\cap U_2\cap\dots\cap U_n: U_i\in \mathcal{U}_i, i=1,2,\dots,n \}.
\]
It is easy to see that $\bigvee_{i=1}^n \mathcal{U}_i$ is also an open cover of $X$.
For any open cover $\mathcal{U}$ of $X$, we denote by $|\mathcal{U}|=\max\{diam(U): U\in \mathcal{U}\}$, by $f_i^{-j}(\mathcal{U})=\{f_i^{-j}(U):U\in \mathcal{U}\}$ and by $\mathcal{U}_i^n=\bigvee_{j=0}^{n-1}f_i^{-j}(\mathcal{U})$.
For any nonempty subset $K\subseteq X$, let $\mathcal{U}|_K$ be the cover $\{U\cap K: U\in \mathcal{U}\}$ of $K$. We put
\[
q_n(f_{1,\infty},K,\varphi,\mathcal{U})=\inf\left\{\sum_{B\in \mathcal{B}}\inf_{x\in B} \exp(S_n\varphi(x))\right\},
\]
where the infimum is taken over all finite covers $\mathcal{B}$ of $\mathcal{U}_1^n|_K$.

\begin{lemma}\cite{LY2022}
Let $(X,f_{1,\infty})$ be an NDS, and $K$ a nonempty subset of $X$.
Then for any $\varphi\in C(X, \mathbb{R})$,
\[
P(f_{1,\infty},K,\varphi)=\sup\left\{\limsup_{n\to\infty}\frac{1}{n}\log q_n(f_{1,\infty},K,\varphi,\mathcal{U})\right\},
\]
where the supremum is taken over all open covers $\mathcal{U}$ of $X$.
\end{lemma}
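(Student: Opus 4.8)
\emph{Proof sketch.} The plan is to prove the two inequalities by trapping $q_n(f_{1,\infty},K,\varphi,\mathcal U)$ between the spanning sum $Q_n(f_{1,\infty},K,\varphi,\varepsilon)$ and the separated sum $P_n(f_{1,\infty},K,\varphi,\varepsilon)$ at a scale $\varepsilon$ adapted to $\mathcal U$, and then letting $\varepsilon\to 0$ and invoking \eqref{classicalpressure}. The only input about the potential I would use is its modulus of continuity $\omega(\varepsilon):=\sup\{|\varphi(a)-\varphi(b)|:d(a,b)\le\varepsilon\}$, which tends to $0$ as $\varepsilon\to 0^{+}$ because $\varphi$ is uniformly continuous on the compact metric space $X$, together with the elementary bound $|S_n\varphi(x)-S_n\varphi(y)|\le n\,\omega(\varepsilon)$ valid whenever $d_n(x,y)\le\varepsilon$ (since then $d(f_1^i(x),f_1^i(y))\le\varepsilon$ for every $0\le i\le n-1$, $S_n\varphi$ being an ordinary finite sum). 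I would also first reduce the supremum to finite open covers: if $\mathcal V\subseteq\mathcal U$ is a finite subcover then $\mathcal V_1^n|_K\subseteq\mathcal U_1^n|_K$, so every finite subfamily of $\mathcal V_1^n|_K$ covering $K$ is admissible in the infimum defining $q_n(f_{1,\infty},K,\varphi,\mathcal U)$, whence $q_n(f_{1,\infty},K,\varphi,\mathcal U)\le q_n(f_{1,\infty},K,\varphi,\mathcal V)$ for all $n$.

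For $\sup_{\mathcal U}\limsup_{n\to\infty}\frac1n\log q_n\le P(f_{1,\infty},K,\varphi)$, fix a finite open cover $\mathcal U$, let $\delta>0$ be a Lebesgue number for it, and take $0<\varepsilon<\delta/2$. For a finite $(n,\varepsilon)$-spanning set $F$ of $K$ and each $y\in F$, choose for every $0\le i\le n-1$ a member $U_i^{(y)}\in\mathcal U$ with $\overline B\bigl(f_1^i(y),\varepsilon\bigr)\subseteq U_i^{(y)}$; then $\overline B_n(y,\varepsilon)=\bigcap_{i=0}^{n-1}f_1^{-i}\bigl(\overline B(f_1^i(y),\varepsilon)\bigr)\subseteq\widetilde B_y:=\bigcap_{i=0}^{n-1}f_1^{-i}\bigl(U_i^{(y)}\bigr)\in\mathcal U_1^n$. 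After discarding the $y$ with $\overline B_n(y,\varepsilon)\cap K=\emptyset$, the sets $\widetilde B_y\cap K$ form a finite subfamily of $\mathcal U_1^n|_K$ covering $K$, and for each retained $y$ a choice of $z_y\in\overline B_n(y,\varepsilon)\cap K$ gives, by the variation bound, $\inf_{x\in\widetilde B_y\cap K}\exp(S_n\varphi(x))\le\exp(S_n\varphi(z_y))\le e^{n\omega(\varepsilon)}\exp(S_n\varphi(y))$. Summing over $y$ and then taking the infimum over $(n,\varepsilon)$-spanning sets $F$ gives $q_n(f_{1,\infty},K,\varphi,\mathcal U)\le e^{n\omega(\varepsilon)}Q_n(f_{1,\infty},K,\varphi,\varepsilon)$; dividing by $n$, taking $\limsup_{n\to\infty}$ and then letting $\varepsilon\to0$ yields the desired inequality by \eqref{classicalpressure}.

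For the reverse inequality, given $\varepsilon>0$ I would choose a finite open cover $\mathcal U$ with $|\mathcal U|<\varepsilon$, so that every member of $\mathcal U_1^n$ has $d_n$-diameter at most $|\mathcal U|<\varepsilon$. Fix any finite subfamily $\mathcal B\subseteq\mathcal U_1^n|_K$ covering $K$ and any $(n,\varepsilon)$-separated set $E\subseteq K$. Since each $B\in\mathcal B$ contains at most one point of $E$, choosing for each $x\in E$ some $B_x\in\mathcal B$ with $x\in B_x$ defines an injection $x\mapsto B_x$, and the variation bound gives $\inf_{y\in B_x}\exp(S_n\varphi(y))\ge e^{-n\omega(\varepsilon)}\exp(S_n\varphi(x))$; hence
\[
\sum_{B\in\mathcal B}\inf_{x\in B}\exp\bigl(S_n\varphi(x)\bigr)\ \ge\ \sum_{x\in E}\inf_{y\in B_x}\exp\bigl(S_n\varphi(y)\bigr)\ \ge\ e^{-n\omega(\varepsilon)}\sum_{x\in E}\exp\bigl(S_n\varphi(x)\bigr).
\]
Taking the supremum over $E$, then the infimum over $\mathcal B$, gives $q_n(f_{1,\infty},K,\varphi,\mathcal U)\ge e^{-n\omega(\varepsilon)}P_n(f_{1,\infty},K,\varphi,\varepsilon)$; dividing by $n$, taking $\limsup_{n\to\infty}$ and letting $\varepsilon\to0$ again, the conclusion follows from \eqref{classicalpressure}.

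I expect no genuinely new difficulty beyond the autonomous case, since $d_n(x,y)\ge d(f_1^i(x),f_1^i(y))$ for $0\le i\le n-1$ and $S_n\varphi$ is an ordinary finite sum, so all Bowen-ball/open-cover comparisons and the variation estimates transfer verbatim. The points needing care, and the main bookkeeping obstacle, are that spanning sets need not be contained in $K$ and that a member of $\mathcal U_1^n$ may be strictly larger than the Bowen ball it was produced from; both are absorbed into the factors $e^{\pm n\omega(\varepsilon)}$ once one passes to the subfamilies of balls (resp. cover elements) that actually meet $K$, and verifying that the families built in the two directions are legitimate finite subfamilies of $\mathcal U_1^n|_K$ covering $K$ is the step I would write out most carefully.
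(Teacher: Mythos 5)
Your argument is correct: the two-sided comparison $e^{-n\omega(\varepsilon)}P_n(f_{1,\infty},K,\varphi,\varepsilon)\le q_n(f_{1,\infty},K,\varphi,\mathcal U)\le e^{n\omega(\varepsilon)}Q_n(f_{1,\infty},K,\varphi,\varepsilon)$ (with $\varepsilon$ below half the Lebesgue number in one direction and $|\mathcal U|<\varepsilon$ in the other), followed by $\varepsilon\to0$ and \eqref{classicalpressure}, is exactly the standard Walters-type proof, and it transfers to the nonautonomous setting for the reasons you give. The present paper quotes this lemma from \cite{LY2022} without reproving it, and your sketch matches the argument given there.
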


It is known that
\[
  P(f_{1,\infty},K,\varphi)=\lim_{|\mathcal{U}|\to 0} \left\{\limsup_{n\to\infty}\frac{1}{n}\log q_n(f_{1,\infty},K,\varphi,\mathcal{U})\right\}.
\]

\subsection{Pesin topological pressure}
Let $(X,f_{1,\infty})$ be an NDS, $K$ a nonempty subset of $X$, and $\varphi$ a continuous function on $X$.
The Pesin (or Pesin-Pitskel') topological pressure on subsets was first introduced by Pesin and Pitskel' \cite{PP1984} and subsequently developed into the Carath{\'e}odory-Pesin theory by Pesin \cite{Pes1997}.
In \cite{PP1984}, the authors demonstrated that the Pesin topological pressure for autonomous dynamical system equals the Bowen topological entropy when the potential function becomes zero.
Consequently, we also denote the Pesin topological pressure by $P^B(f_{1,\infty},K,\varphi)$ throughout this manuscript.

Given $N\in \mathbb{N}$, $s\in \mathbb{R}$, $\varepsilon>0$, and $\varphi\in C(X,\mathbb{R})$, define
\begin{equation}\label{Mff}
    M(f_{1,\infty},K,\varphi,\varepsilon,s,N)=\inf\left\{ \sum_i \exp\big (-sn_i+S_{n_i}\varphi(x_i)\big)\right\},
\end{equation}
where the infimum is taken over all finite or countable collections of $\Gamma = \{B_{n_i}(x_i,\varepsilon)\}_i$ such that $x_i\in X$, $n_i\geq N$ and $\Gamma$ covers $K$, i.e., $K\subseteq \bigcup_i B_{n_i}(x_i,\varepsilon)$.

We also define
\[
R(f_{1,\infty},K,\varphi,\varepsilon,s,N)=\inf\left\{\sum_i\exp\big (-sN+S_N\varphi(x_i)\big )\right \},
\]
where the infimum is taken over all finite or countable collections of $\{B_N(x_i,\varepsilon)\}_i$ such that $x_i\in X$ and $K\subseteq \bigcup_i B_N(x_i,\varepsilon)$.

The quantity $M(f_{1,\infty},K,\varphi,\varepsilon,s,N)$ does not decrease as $N\to \infty$, hence the following limit exists:
\[
M(f_{1,\infty},K,\varphi,\varepsilon,s)=\lim_{N\to\infty} M(f_{1,\infty},K,\varphi,\varepsilon,s,N).
\]
We further let
\begin{align*}
\overline{r}(f_{1,\infty},K,\varphi,\varepsilon,s)&=\limsup_{N\to\infty} R(f_{1,\infty},K,\varphi,\varepsilon,s,N),\\
\underline{r}(f_{1,\infty},K,\varphi,\varepsilon,s)&=\liminf_{N\to\infty} R(f_{1,\infty},K,\varphi,\varepsilon,s,N).
\end{align*}
By the Carath{\'e}odory-Pesin theory \cite{Pes1997}, there exist critical values of the parameter $s$ such that $M(f_{1,\infty},K,\varphi,\varepsilon,s)$, $\overline{r}(f_{1,\infty},K,\varphi,\varepsilon,s)$ and $\underline{r}(f_{1,\infty},K,\varphi,\varepsilon,s)$ jump from $+\infty$ to $0$, we denote these values by $P^B(f_{1,\infty},K,\varphi,\varepsilon)$, $\overline{CP}(f_{1,\infty},K,\varphi,\varepsilon)$ and $\underline{CP}(f_{1,\infty},K,\varphi,\varepsilon)$, respectively.
\begin{align*}
 P^B(f_{1,\infty},K,\varphi,\varepsilon)&=\sup\{s: M(f_{1,\infty},K,\varphi,\varepsilon,s)=+\infty\}=\inf\{s: M(f_{1,\infty},K,\varphi,\varepsilon,s)=0\},\\
 \overline{CP}(f_{1,\infty},K,\varphi,\varepsilon)&=\sup\{s: \overline{r}(f_{1,\infty},K,\varphi,\varepsilon,s)=+\infty\}=\inf\{s: \overline{r}(f_{1,\infty},K,\varphi,\varepsilon,s)=0\},\\
\underline{CP}(f_{1,\infty},K,\varphi,\varepsilon)&=\sup\{s: \underline{r}(f_{1,\infty},K,\varphi,\varepsilon,s)=+\infty\}=\inf\{s: \underline{r}(f_{1,\infty},K,\varphi,\varepsilon,s)=0\}.
\end{align*}

\begin{definition}
Let $(X,f_{1,\infty})$ be an NDS, $K$ a nonempty subset of $X$ and $\varphi \in C(X,\mathbb{R})$. The \emph{Pesin topological pressure}, the \emph{upper and lower capacity topological pressure of the function $\varphi$ on the set $K$ with respect to $f_{1,\infty}$} are given by
\begin{align*}
P^B(f_{1,\infty},K,\varphi)&=\lim_{\varepsilon\to 0} P^B(f_{1,\infty},K,\varphi,\varepsilon),\\
\overline{CP}(f_{1,\infty},K,\varphi)&=\lim_{\varepsilon\to 0}\overline{CP}(f_{1,\infty},K,\varphi,\varepsilon),\\
\underline{CP}(f_{1,\infty},K,\varphi)&=\lim_{\varepsilon\to 0}\underline{CP}(f_{1,\infty},K,\varphi,\varepsilon),
\end{align*}
respectively. In particular, when $\varphi=0$, the Pesin topological pressure $P^B(f_{1,\infty},K,\varphi)$ reduces to the Bowen topological entropy, we denote it by $h_{top}^B(f_{1,\infty},K)$. 
Similarly, the upper and lower capacity topological entropies on $K$ are denoted by $\overline{Ch}(f_{1,\infty},K)$ and $\underline{Ch}(f_{1,\infty},K)$, respectively.
\end{definition}

\subsection{Packing topological pressure}
The notion of packing topological pressure was studied by Feng and Huang in \cite{FH2012}, inspired by the concept of packing measure in fractal geometry \cite{Mat2004}. 
Let $(X,f_{1,\infty})$ be an NDS on the compact metric space $(X,d)$ and let $K$ be a nonempty subset of $X$. 
For any $\varphi\in C(X,\mathbb{R})$, $s\geq 0$, $N\in \mathbb{N}$, and $\varepsilon>0$, define
\begin{equation}\label{pressuremp}
\mathcal{M}^P(f_{1,\infty},K,\varphi,\varepsilon,s,N)=\sup\left\{\sum_{i}\exp\big(-sn_i+S_{n_i}\varphi(x_i)\big) \right\},
\end{equation}
where the supremum is taken over all finite or countable pairwise disjoint closed Bowen balls $\{\overline{B}_{n_i}(x_i,\varepsilon) \}$ such that $x_i\in K$ and $n_i\geq N$ for all $i$.

Taking the limit as $N\to\infty$, we set 
\[
\mathcal{M}^P(f_{1,\infty},K,\varphi,\varepsilon,s)=\lim_{N\to\infty}\mathcal{M}^P(f_{1,\infty},K,\varphi,\varepsilon,s,N).
\]
Next, define
\[
\mathcal{M}^{\widetilde{P}}(f_{1,\infty},K,\varphi,\varepsilon,s)=\inf\left\{\sum_{i=1}^{\infty}\mathcal{M}^P(f_{1,\infty},K_i,\varphi,\varepsilon,s): K\subseteq \bigcup_{i=1}^{\infty}K_i\right\}.
\]
There exists a unique critical value $s$, denoted by $P^P(f_{1,\infty},K,\varphi,\varepsilon)$, at which $\mathcal{M}^{\widetilde{P}}(f_{1,\infty},K,\varphi,\varepsilon,s)$ jumps from infinity to zero. More precisely,
\begin{align*}
P^P(f_{1,\infty},K,\varphi,\varepsilon)&=\sup\{s:\mathcal{M}^{\widetilde{P}}(f_{1,\infty},K,\varphi,\varepsilon,s)=\infty\}\\
&=\inf\{s: \mathcal{M}^{\widetilde{P}}(f_{1,\infty},K,\varphi,\varepsilon,s)=0\}.
\end{align*}

\begin{definition}
Let $(X,f_{1,\infty})$ be an NDS and $K$ a nonempty subset of $X$. 
For any $\varphi \in C(X,\mathbb{R})$, the \emph{packing topological pressure of $\varphi$ on the set $K$ with respect to $f_{1,\infty}$} is defined by
\[
P^P(f_{1,\infty},K,\varphi)= \lim_{\varepsilon\to 0}P^P(f_{1,\infty},K,\varphi,\varepsilon).
\]
In particular, when $\varphi=0$, the packing topological pressure becomes the packing topological entropy, we denote it by $h_{top}^P(f_{1,\infty},K)$ for consistency \cite{ZZ2023}.
\end{definition}

\begin{remark}
If $S_{n_i}\varphi(x_i)$ in \eqref{Mff} and \eqref{pressuremp} is replaced by $\sup_{y\in B_{n_i}(x_i,\varepsilon)} S_{n_i}\varphi(y)$, then the corresponding values of Pesin topological pressure and packing topological pressure remain unchanged. 
Let $P^{B'}(f_{1,\infty},K,\varphi,\varepsilon)$ and $P^{P'}(f_{1,\infty},K,\varphi,\varepsilon)$ denote the critical values arising from this modification for the Pesin and packing pressures, respectively. By \cite[Proposition 2.3]{ZC2023}, we also have
\begin{align*}
P^{B}(f_{1,\infty},K,\varphi)&=\lim_{\varepsilon\to0}
P^{B'}(f_{1,\infty},K,\varphi,\varepsilon),\\
P^{P}(f_{1,\infty},K,\varphi)&=\lim_{\varepsilon\to0}
P^{P'}(f_{1,\infty},K,\varphi,\varepsilon).
\end{align*}
\end{remark}

\subsection{Measure-theoretic pressure}
Let $\mathcal{B}(X)$ denote the $\sigma$-algebra of Borel subsets of $X$, and let $\mathcal{M}(X)$ denote the collection of all probability measures on the measurable space $(X,\mathcal{B}(X))$. 
Throughout this subsection, we do not assume that a given measure $\mu\in\mathcal{M}(X)$ is invariant under the sequence of transformations $f_{1,\infty}$.
It is well known that $\mathcal{M}(X)$ is compact with respect to the weak$^*$-topology \cite{Wal2000a}.

\begin{definition}
Let $(X,f_{1,\infty})$ be an NDS, $\varphi\in C(X,\mathbb{R})$, and $\mu\in \mathcal{M}(X)$. 
The \emph{upper and lower measure-theoretic local pressures (or Brin-Katok pressures) of $x\in X$ with respect to $\mu$ and $\varphi$} are defined by
\begin{align*}
\overline{P}_{\mu}(f_{1,\infty},x,\varphi)&=\lim_{\varepsilon\to 0}\limsup_{n\to\infty}\frac{-\log \mu(B_n(x,\varepsilon))+S_n\varphi(x)}{n},\\
\underline{P}_{\mu}(f_{1,\infty},x,\varphi)&=\lim_{\varepsilon\to 0}\liminf_{n\to\infty}\frac{-\log \mu(B_n(x,\varepsilon))+S_n\varphi(x)}{n},
\end{align*}
respectively. In particular, when $\varphi=0$, we denote the \emph{upper and lower measure-theoretic local entropy (or Brin-Katok entropies) of $x\in X$ with respect to $\mu$} by $\overline{h}_{\mu}(f_{1,\infty},x)$ and $\underline{h}_{\mu}(f_{1,\infty},x)$, respectively.
\end{definition}

\begin{remark}
The quantities $\overline{h}_{\mu}(f_{1,\infty},x)$ and $\underline{h}_{\mu}(f_{1,\infty},x)$ are also called the local upper and lower $\mu$-measure entropies of $x$ in \cite{Bis2018}.
\end{remark}

\begin{definition} \label{definitiontheoretic}
Let $(X,f_{1,\infty})$ be an NDS, $K\in \mathcal{B}(X)$, $\mu\in \mathcal{M}(X)$, and $\varphi\in C(X,\mathbb{R})$. 
The \emph{upper and lower measure-theoretic (local) pressures of $\mu$ with respect to $\varphi$} over $K$ are defined by
\begin{align*}
\overline{P}_{\mu}(f_{1,\infty},K,\varphi)&=\int_K \overline{P}_{\mu}(f_{1,\infty},x,\varphi) d\mu(x),\\
\underline{P}_{\mu}(f_{1,\infty},K,\varphi)&=\int_K \underline{P}_{\mu}(f_{1,\infty},x,\varphi) d\mu(x),
\end{align*}
respectively. 
In particular, when
$K=X$, we denote $\overline{P}_{\mu}(f_{1,\infty},X,\varphi)$ and $\underline{P}_{\mu}(f_{1,\infty},X,\varphi)$ by $\overline{P}_{\mu}(f_{1,\infty},\varphi)$ and $\underline{P}_{\mu}(f_{1,\infty},\varphi)$ for simplicity, respectively. 
Moreover, when $\varphi=0$, the \emph{upper and lower measure-theoretic (local) entropies of $\mu$} over $K$ are given by
\begin{align*}
\overline{h}_{\mu}(f_{1,\infty},K)&=\overline{P}_{\mu}(f_{1,\infty},K,0)=\int_K \overline{h}_{\mu}(f_{1,\infty},x)d\mu(x),\\
\underline{h}_{\mu}(f_{1,\infty},K)&=\underline{P}_{\mu}(f_{1,\infty},K,0)=\int_K \underline{h}_{\mu}(f_{1,\infty},x)d\mu(x),
\end{align*}
respectively.
\end{definition}

\begin{definition}\label{pressureofmu}
Let $(X,f_{1,\infty})$ be an NDS, $\varphi\in C(X,\mathbb{R})$, and $\mu\in \mathcal{M}(X)$. 
The \emph{Pesin pressure}, \emph{packing pressure}, \emph{lower and upper capacity pressures} of $\mu$ with respect to $\varphi$ are given by
\begin{align*}
P_{\mu}^B(f_{1,\infty},\varphi)&=\lim_{\delta\to0}\inf\{P^B(f_{1,\infty},K,\varphi):\mu(K) \geq 1- \delta \},\\
P_{\mu}^P(f_{1,\infty},\varphi)&= \lim_{\delta\to0}\inf\{P^P(f_{1,\infty},K,\varphi): \mu(K) \geq 1- \delta\},\\
\underline{CP}_{\mu}(f_{1,\infty},\varphi) & =\lim_{\delta\to 0}\inf\{\underline{CP}(f_{1,\infty},K,\varphi):\mu(K)\geq 1-\delta \},\\
\overline{CP}_{\mu}(f_{1,\infty},\varphi) & =\lim_{\delta\to 0}\inf\{\overline{CP}(f_{1,\infty},K,\varphi):\mu(K)\geq 1-\delta \},
\end{align*}
respectively.
\end{definition}

\section{Properties of various pressures}
\begin{proposition}\label{propertiesp}
Let $(X,f_{1,\infty})$ be an NDS and $\varphi \in C(X,\mathbb{R})$. Then the following assertions follow.
\begin{enumerate}
\item[(1)] If $K\subseteq K'\subseteq X$, then
\[
  \mathcal{P}(f_{1,\infty},K,\varphi)\leq \mathcal{P}(f_{1,\infty},K',\varphi),
\]
where $\mathcal{P}$ is chosen from $\{P^B, \underline{CP}, \overline{CP}, P^P\}$.

\item[(2)] If $K = \bigcup_i K_i$, then
\begin{align*}
P^B(f_{1,\infty},K,\varphi) &= \sup_{i\geq 1} P^B(f_{1,\infty},K_i,\varphi) \quad P^P(f_{1,\infty},K,\varphi) = \sup_{i\geq 1} P^P(f_{1,\infty},K_i,\varphi),\\
\overline{CP}(f_{1,\infty},K,\varphi) &\geq \sup_i \overline{CP}(f_{1,\infty},K_i,\varphi),\quad \underline{CP}(f_{1,\infty},K,\varphi)\geq \sup_i \underline{CP}(f_{1,\infty},K_i,\varphi).
\end{align*}
\end{enumerate}
\end{proposition}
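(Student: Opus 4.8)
The plan is to reduce both parts to the monotonicity of the underlying Carath\'eodory--Pesin set functions in their set argument, and, for the countably stable quantities, to a geometric-series splitting of the error. For part~(1), I fix $\varepsilon>0$ and $s$ and observe that every admissible configuration in the definition attached to $K'$ restricts to an admissible one attached to $K\subseteq K'$: a countable cover $\{B_{n_i}(x_i,\varepsilon)\}_i$ of $K'$ also covers $K$; a cover $\{B_N(x_i,\varepsilon)\}_i$ of $K'$ also covers $K$; a pairwise disjoint family $\{\overline B_{n_i}(x_i,\varepsilon)\}_i$ with centres in $K$ is also one with centres in $K'$; and a countable family covering $K'$ also covers $K$. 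Consequently, for every $N$, $M(f_{1,\infty},K,\varphi,\varepsilon,s,N)\le M(f_{1,\infty},K',\varphi,\varepsilon,s,N)$, $R(f_{1,\infty},K,\varphi,\varepsilon,s,N)\le R(f_{1,\infty},K',\varphi,\varepsilon,s,N)$ and $\mathcal M^P(f_{1,\infty},K,\varphi,\varepsilon,s,N)\le \mathcal M^P(f_{1,\infty},K',\varphi,\varepsilon,s,N)$, and likewise $\mathcal M^{\widetilde P}(f_{1,\infty},K,\varphi,\varepsilon,s)\le \mathcal M^{\widetilde P}(f_{1,\infty},K',\varphi,\varepsilon,s)$. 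Since an inequality $g_K\le g_{K'}$ between such nonnegative set functions forces $\{s:g_{K'}(s)=0\}\subseteq\{s:g_K(s)=0\}$, the corresponding critical values are monotone in the set variable; letting $N\to\infty$ and then $\varepsilon\to0$ gives~(1).

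Part~(2) splits into an easy and a hard half. Because each $K_i\subseteq K$, part~(1) immediately yields $\overline{CP}(f_{1,\infty},K_i,\varphi)\le \overline{CP}(f_{1,\infty},K,\varphi)$ and $\underline{CP}(f_{1,\infty},K_i,\varphi)\le \underline{CP}(f_{1,\infty},K,\varphi)$ for all $i$, and taking the supremum over $i$ gives the two displayed inequalities; no reverse inequality is expected here, these being box-dimension-type quantities. The same reasoning gives $\sup_i P^B(f_{1,\infty},K_i,\varphi)\le P^B(f_{1,\infty},K,\varphi)$ and $\sup_i P^P(f_{1,\infty},K_i,\varphi)\le P^P(f_{1,\infty},K,\varphi)$, so the substance of~(2) is the reverse inequalities for $P^B$ and $P^P$.

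For $P^B$, I first record that enlarging the Bowen balls in a cover, with the same centres and orders (the weights $\exp(-sn_i+S_{n_i}\varphi(x_i))$ being insensitive to $\varepsilon$), shows $\varepsilon\mapsto P^B(f_{1,\infty},\cdot,\varphi,\varepsilon)$ is non-increasing, hence $P^B(f_{1,\infty},K_i,\varphi,\varepsilon)\le P^B(f_{1,\infty},K_i,\varphi)$ for all $\varepsilon$. Fix $\varepsilon>0$ and a number $s>\sup_i P^B(f_{1,\infty},K_i,\varphi)$. Then $M(f_{1,\infty},K_i,\varphi,\varepsilon,s)=0$ for every $i$, and since $N\mapsto M(f_{1,\infty},K_i,\varphi,\varepsilon,s,N)$ is nonnegative and non-decreasing with limit $0$, in fact $M(f_{1,\infty},K_i,\varphi,\varepsilon,s,N)=0$ for every $N$. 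Given $\delta>0$ and $N\in\mathbb N$, I choose for each $i$ a countable cover $\Gamma_i$ of $K_i$ by Bowen balls of order at least $N$ whose weight sum is $<\delta 2^{-i}$; then $\bigcup_i\Gamma_i$ covers $K=\bigcup_iK_i$ by balls of order at least $N$ with weight sum $<\delta$, so $M(f_{1,\infty},K,\varphi,\varepsilon,s,N)<\delta$. As $\delta$ and $N$ are arbitrary, $M(f_{1,\infty},K,\varphi,\varepsilon,s)=0$, hence $P^B(f_{1,\infty},K,\varphi,\varepsilon)\le s$; letting $\varepsilon\to0$ and then $s\downarrow\sup_i P^B(f_{1,\infty},K_i,\varphi)$ completes this case.

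For $P^P$ the argument is cleaner, since $\mathcal M^{\widetilde P}$ is already defined as an infimum over countable covers. Using the analogous $\varepsilon$-monotonicity of $P^P(f_{1,\infty},\cdot,\varphi,\varepsilon)$, fix $\varepsilon>0$ and $s>\sup_i P^P(f_{1,\infty},K_i,\varphi)$, so $\mathcal M^{\widetilde P}(f_{1,\infty},K_i,\varphi,\varepsilon,s)=0$ for each $i$; for $\delta>0$ pick countable covers $K_i\subseteq\bigcup_j K_{ij}$ with $\sum_j\mathcal M^P(f_{1,\infty},K_{ij},\varphi,\varepsilon,s)<\delta 2^{-i}$, so that $\{K_{ij}\}_{i,j}$ is a countable cover of $K$ with $\sum_{i,j}\mathcal M^P(f_{1,\infty},K_{ij},\varphi,\varepsilon,s)<\delta$, whence $\mathcal M^{\widetilde P}(f_{1,\infty},K,\varphi,\varepsilon,s)=0$, $P^P(f_{1,\infty},K,\varphi,\varepsilon)\le s$, and letting $\varepsilon\to0$ concludes. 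I expect the only genuinely delicate points to be bookkeeping: arranging the covers of the pieces $K_i$ to have simultaneously large Bowen order (handled by the ``$M\equiv0$ in $N$'' observation) and legitimizing the exchange of $\sup_i$ with $\lim_{\varepsilon\to0}$ (handled by the $\varepsilon$-monotonicity of the level-$\varepsilon$ pressures); everything else is a direct unwinding of the Carath\'eodory--Pesin definitions.
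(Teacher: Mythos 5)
Your proof is correct and follows essentially the same route as the paper: monotonicity of the Carath\'eodory--Pesin set functions for part (1), and for the packing pressure the same $2^{-i}$-weighted choice of countable covers $\{K_{ij}\}_j$ of each $K_i$ to get countable subadditivity of $\mathcal{M}^{\widetilde P}$ and hence $P^P(f_{1,\infty},K,\varphi)\le\sup_i P^P(f_{1,\infty},K_i,\varphi)$. The only differences are that you write out the analogous argument for $P^B$ (which the paper simply attributes to Pesin) and that you make explicit the $\varepsilon$-monotonicity needed to pass from $s>\sup_i P^P(f_{1,\infty},K_i,\varphi)$ to $P^P(f_{1,\infty},K_i,\varphi,\varepsilon)<s$, a step the paper uses implicitly.
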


\begin{proof}
It suffices to prove the assertions for packing topological pressure, since the corresponding results for the Pesin pressure, upper and lower capacity pressures follow directly by \cite{Pes1997}. 
The first assertion is immediate from the definitions and is therefore omitted.

(2) Suppose that $K= \bigcup_{i\geq 1} K_i$. By (1), we have $P^P(f_{1,\infty},K,\varphi)\geq \sup_{i\geq 1}P^P(f_{1,\infty},K_i,\varphi)$. 
To prove the reverse inequality, for any $\epsilon>0$ and each $i\geq 1$, there exists a collection $\{K_{ij}\}_{j\geq 0}$ such that $K_i\subseteq \bigcup_{j\geq 0}K_{ij}$ and
\[
\sum_{j\geq 0}\mathcal{M}^P(f_{1,\infty},K_{ij},\varphi,\varepsilon,s)- \mathcal{M}^{\widetilde{P}}(f_{1,\infty},K_i,\varphi,\varepsilon,s)\leq \frac{\epsilon}{2^i}.
\]
Consequently,
\[
\sum_{i\geq 1} \sum_{j\geq 0}\mathcal{M}^P(f_{1,\infty},K_{ij},\varphi,\varepsilon,s)\leq \sum_{i\geq 1} \mathcal{M}^{\widetilde{P}}(f_{1,\infty},K_i,\varphi,\varepsilon,s)+\sum_{i\geq 1} \frac{\epsilon}{2^i}.
\]
This yields
\begin{equation}\label{inequality2}
\mathcal{M}^{\widetilde{P}}(f_{1,\infty},K,\varphi,\varepsilon,s)\leq \sum_{i\geq 1}\mathcal{M}^{\widetilde{P}}(f_{1,\infty},K_i,\varphi,\varepsilon,s).
\end{equation}
Now assume that $\sup_{i\geq 1}P^P(f_{1,\infty},K_i,\varphi)<s$. Then for any $\varepsilon>0$ and any $i\geq 1$, $P^P(f_{1,\infty},K_i,\varphi,\varepsilon)<s$, which implies $\mathcal{M}^{\widetilde{P}}(f_{1,\infty},K_i,\varphi,\varepsilon,s)=0$. 
By \eqref{inequality2}, it follows that $\mathcal{M}^{\widetilde{P}}(f_{1,\infty},K,\varphi,\varepsilon,s)=0$, and hence $P^P(f_{1,\infty},K,\varphi,\varepsilon)\leq s$. 
Therefore, $P^P(f_{1,\infty},K,\varphi)\leq \sup_{i\geq 1} P^P(f_{1,\infty},K_i,\varphi)$.
\end{proof}

\begin{proposition}~\label{prop3}
Let $(X,f_{1,\infty})$ be an NDS, $K$ a nonempty subset of $X$, and $\mathcal{P}\in\{P^B, P^P,\underline{CP}, \overline{CP}\}$. 
Then for any $\varphi,\psi \in C(X,\mathbb{R})$, the following assertions hold:
\begin{enumerate}
    \item[(1)] if $\varphi \leq \psi$, then $\mathcal{P}(f_{1,\infty},K,\varphi) \leq \mathcal{P}(f_{1,\infty},K,\psi)$;
    \item[(2)] if $\mathcal{P}$ is finite, then $|\mathcal{P}(f_{1,\infty},K,\varphi)- \mathcal{P}(f_{1,\infty},K,\psi) |\leq \|\varphi- \psi \|$, where $\|\cdot\|$ denotes the supremum norm on the space of continuous functions on $X$;
    \item[(3)]  if $c$ is a real constant, then $\mathcal{P}(f_{1,\infty},K,\varphi +c)=\mathcal{P}(f_{1,\infty},K,\varphi)+c$;
   \item[(4)] 
   if $c \geq 1$, then $\mathcal{P}(f_{1,\infty},K,c\varphi)\leq c\cdot \mathcal{P}(f_{1,\infty},K,\varphi)$; if $c \leq 1$, then $\mathcal{P}(f_{1,\infty},K,c\varphi)\geq c\cdot \mathcal{P}(f_{1,\infty},K,\varphi)$;
   \item[(5)]  $|\mathcal{P}(f_{1,\infty},K,\varphi)|\leq \mathcal{P}(f_{1,\infty},K,|\varphi|)$.
\end{enumerate}
\end{proposition}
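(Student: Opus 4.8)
The plan is to reduce each item to the behaviour of the ergodic sums $S_n\varphi$ under the relevant algebraic operations, combined with Hölder's inequality and the elementary power inequalities $\sum_i a_i^{c}\le(\sum_i a_i)^{c}$ for $c\ge1$ and $\sum_i a_i^{c}\ge(\sum_i a_i)^{c}$ for $0\le c\le1$ (for nonnegative $a_i$), applied to the Carath\'eodory sums $\sum_i\exp(-sn_i+S_{n_i}\varphi(x_i))$ underlying $M$, $R$, $\mathcal{M}^P$ and $\mathcal{M}^{\widetilde{P}}$. Fix $f_{1,\infty}$ and $K$ and write $\mathcal{P}(\varphi)$, resp.\ $\mathcal{P}(\varphi,\varepsilon)$, for $\mathcal{P}(f_{1,\infty},K,\varphi)$ and its scale-$\varepsilon$ value. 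Item (1): $\varphi\le\psi$ gives $S_n\varphi(x)\le S_n\psi(x)$ for all $n,x$, so every Carath\'eodory sum for $\varphi$ is dominated termwise by the one for $\psi$, hence the defining $M/R/\mathcal{M}^P/\mathcal{M}^{\widetilde{P}}$ for $\varphi$ lie below those for $\psi$ at each $s$ and $\varepsilon$, and the critical value increases. Item (4): $S_n(\varphi+c)=S_n\varphi+nc$ turns the Carath\'eodory sum at parameter $s$ for $\varphi+c$ into the one at parameter $s-c$ for $\varphi$, whence $\mathcal{P}(\varphi+c,\varepsilon)=\mathcal{P}(\varphi,\varepsilon)+c$ for every $\varepsilon$, and $\varepsilon\to0$ gives (4). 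Item (2) is then immediate from (1) and (4): $\varphi\le\psi+\|\varphi-\psi\|$ yields $\mathcal{P}(\varphi)\le\mathcal{P}(\psi)+\|\varphi-\psi\|$, and symmetrically.

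Item (6): since $S_n(c\varphi)=cS_n\varphi$, each weight $\exp(-(cs)n_i+S_{n_i}(c\varphi)(x_i))$ equals $\bigl(\exp(-sn_i+S_{n_i}\varphi(x_i))\bigr)^{c}$. For $c\ge1$ the first power inequality and the monotonicity of $x\mapsto x^{c}$ give $M(c\varphi,\varepsilon,cs)\le M(\varphi,\varepsilon,s)^{c}$, and likewise for $R$ and (applying the power inequality to packings and then to the series) for $\mathcal{M}^{\widetilde{P}}$; taking $s$ just above the critical value of $\varphi$ makes the right side $0$, so the critical value of $c\varphi$ is $\le cs$, and letting $s$ decrease and $\varepsilon\to0$ gives $\mathcal{P}(c\varphi)\le c\,\mathcal{P}(\varphi)$; for $0\le c\le1$ the reversed power inequality gives the reversed conclusion. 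The case $c<0$ I would deduce from (5) below and $\mathcal{P}(f_{1,\infty},K,0)\ge0$ (nonnegativity of the corresponding topological entropies, $K\ne\emptyset$): for $c=-d$ with $d\ge1$, by (5), $\mathcal{P}(-d\varphi)\ge\mathcal{P}(0)-\mathcal{P}(d\varphi)\ge-d\,\mathcal{P}(\varphi)$ using $\mathcal{P}(0)\ge0$ and the already-proved case $d\ge1$; for $-1\le c<0$ apply the case $0\le d\le1$ to $-\varphi$ together with $\mathcal{P}(\varphi)+\mathcal{P}(-\varphi)\ge\mathcal{P}(0)\ge0$.

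Item (3), convexity, is the core and rests on Hölder's inequality. Write $s=ta+(1-t)b$ with $t\in(0,1)$, $a>\mathcal{P}(\varphi,\varepsilon)$, $b>\mathcal{P}(\psi,\varepsilon)$; each weight for $t\varphi+(1-t)\psi$ at level $s$ factors as $\bigl(\exp(-an_i+S_{n_i}\varphi(x_i))\bigr)^{t}\bigl(\exp(-bn_i+S_{n_i}\psi(x_i))\bigr)^{1-t}$, so Hölder bounds any single Carath\'eodory sum for $t\varphi+(1-t)\psi$ by the $t$-th power of the $\varphi$-sum at $a$ times the $(1-t)$-th power of the $\psi$-sum at $b$ over the same family. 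For $\overline{CP}$ and $\underline{CP}$, via their standard expressions as suprema over $(n,\varepsilon)$-separated sets, and for the packing premeasure $\mathcal{M}^P$, a supremum over packings, each factor is dominated by the corresponding supremum for $\varphi$ at $a$ resp.\ $\psi$ at $b$; these tend to $0$, so the critical value of $t\varphi+(1-t)\psi$ is $\le s$, and $a\downarrow\mathcal{P}(\varphi,\varepsilon)$, $b\downarrow\mathcal{P}(\psi,\varepsilon)$, $\varepsilon\to0$ give (3) for these three. For $P^P$ one then passes from $\mathcal{M}^P$ to $\mathcal{M}^{\widetilde{P}}$ using Hölder for series together with the reformulation of the packing pressure as $\inf\{\sup_i\overline{CP}(f_{1,\infty},K_i,\varphi):K\subseteq\bigcup_iK_i\}$ — here it matters that a supremum, unlike the series in $\mathcal{M}^{\widetilde{P}}$, does not grow under refinement. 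For $P^B$, which is genuinely an infimum of $\sum$-type Carath\'eodory sums over covers by Bowen balls, no single cover is optimal for both potentials; I would use a cover $\Gamma_1$ of $K$ making the $\varphi$-sum at $a$ small, cover each of its Bowen balls (whose trace on $K$ still has Pesin pressure for $\psi$ at scale $\varepsilon$ below $b$, by the monotonicity in \Cref{propertiesp}) so that the $\psi$-sum at $b$ is correspondingly small, pass to the common refinement (doubling the radius, harmless as $\varepsilon\to0$), and apply Hölder — or, more directly, invoke the variational principle for $P^B$ of Sarkooh \cite{Naz2024}, which exhibits $P^B(f_{1,\infty},K,\cdot)$ as a supremum of affine functionals of the potential. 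Making the common-refinement bookkeeping keep both sums small at once is the step I expect to be the main obstacle.

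Finally, (5) follows from (3) and (6) with $c=2$: $\varphi+\psi=\tfrac12(2\varphi)+\tfrac12(2\psi)$ gives $\mathcal{P}(\varphi+\psi)\le\tfrac12\mathcal{P}(2\varphi)+\tfrac12\mathcal{P}(2\psi)\le\mathcal{P}(\varphi)+\mathcal{P}(\psi)$. And (7) follows from (1) and (5): $\varphi\le|\varphi|$ and $-\varphi\le|\varphi|$ give $\mathcal{P}(\varphi)\le\mathcal{P}(|\varphi|)$ and $\mathcal{P}(-\varphi)\le\mathcal{P}(|\varphi|)$, while (5) and $\mathcal{P}(f_{1,\infty},K,0)\ge0$ give $\mathcal{P}(\varphi)\ge\mathcal{P}(0)-\mathcal{P}(-\varphi)\ge-\mathcal{P}(|\varphi|)$, hence $|\mathcal{P}(\varphi)|\le\mathcal{P}(|\varphi|)$.
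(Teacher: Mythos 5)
Your route is genuinely different from the paper's in several places, and in those places it is the more careful one. The paper proves everything only for $P^B$ (asserting the other three are ``similar''), and for (3) and (5) it works at a \emph{single} parameter $s$: it applies the convexity of $\exp$ termwise to get $M(f_{1,\infty},K,t\varphi+(1-t)\psi,\varepsilon,s,N)\le tM(\cdots,\varphi,\cdots)+(1-t)M(\cdots,\psi,\cdots)$ ``by taking the infimum on both sides,'' and reads off convexity of the critical value. Your split-parameter H\"older argument ($s=ta+(1-t)b$ with $a,b$ just above the respective critical values) is the standard and structurally correct way to pass from an inequality between set functions to an inequality between critical exponents; the paper's single-$s$ comparison does not by itself yield convexity of the jump point (if $s$ lies between $tP^B(\varphi)+(1-t)P^B(\psi)$ and $\max\{P^B(\varphi),P^B(\psi)\}$, one of the right-hand terms is $+\infty$ and the inequality carries no information), and the infimum of a sum is bounded below, not above, by the sum of infima. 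Your derivations of (2) from (1)+(4), of (5) from (3)+(6), and of (7) from (1), (5) and $\mathcal{P}(f_{1,\infty},K,0)\ge0$ are likewise cleaner than the paper's (the paper's (7) invokes $P^B(f_{1,\infty},K,-|\varphi|)=-P^B(f_{1,\infty},K,|\varphi|)$ ``by (6),'' which (6) does not give), and your explicit treatment of $P^P$ via the inf--sup over countable decompositions and of negative $c$ in (6) fills in material the paper leaves implicit.

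The one genuine gap is the one you yourself flag: convexity of $P^B$ on an arbitrary nonempty subset $K$. Because $M(f_{1,\infty},K,\cdot,\varepsilon,s,N)$ is an infimum over covers by Bowen balls, H\"older's inequality applied to a single cover bounds the $(t\varphi+(1-t)\psi)$-sum by powers of the $\varphi$- and $\psi$-sums \emph{over that same cover}, and no cover need be simultaneously near-optimal for both potentials; the common-refinement device you sketch does not obviously work, since refining a cover of Bowen balls by Bowen balls changes both the radii and the orders $n_i$, and the resulting family is no longer of the required form. The fallback via Sarkooh's variational principle is also not a complete repair: that principle requires $K$ compact, and for an NDS the lower local pressure $\underline{P}_{\mu}(f_{1,\infty},x,\varphi)$ involves a $\liminf$ of $(-\log\mu(B_n)+S_n\varphi(x))/n$ and is therefore not an affine functional of $\varphi$, so $P^B$ is not exhibited as a supremum of affine maps. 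Since your (5) depends on (3), and your (7) and the $c<0$ half of (6) depend on (5), this single gap propagates to those items in the $P^B$ case. Everything you do for $P^P$, $\underline{CP}$ and $\overline{CP}$ (supremum-type quantities) is sound.
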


\begin{proof}
The proofs of these inequalities are essentially analogous, therefore, for simplicity, we present the argument only for the Pesin topological pressure.

(1) The first assertion follows directly from the definitions.

(2) For any $x\in X$ and $\varphi, \psi\in C(X,\mathbb{R})$, we have
\[
  \frac{1}{n} \sum_{j=0}^{n-1} \left|\varphi(f_1^j(x))- \psi(f_1^j(x))\right| \leq \|\varphi-\psi\|.
\]
Consequently,
\[
S_n\psi(x)-n\|\varphi-\psi\|\leq S_n\varphi(x)\leq S_n\psi(x)+n\|\varphi-\psi\|.
\]
Therefore,
\[
\sum_{i}e^{-sn_i-n_i\|\varphi-\psi\|+S_{n_i}\psi(x_i)}\leq \sum_{i}e^{-sn_i+S_{n_i}\varphi(x_i)}\leq \sum_{i}e^{-sn_i+n_i\|\varphi-\psi\|+S_{n_i}\psi(x_i)}.
\]
It follows that
\[
M(f_{1,\infty},K,\psi,\varepsilon,s+\|\varphi-\psi\|,N)\leq M(f_{1,\infty},K,\varphi,\varepsilon,s,N)\leq M(f_{1,\infty},K,\psi,\varepsilon,s-\|\varphi-\psi\|,N).
\]
Therefore,
\[
P^B(f_{1,\infty},K,\psi)-\|\varphi-\psi\|\leq P^B(f_{1,\infty},K,\varphi)\leq P^B(f_{1,\infty},K,\psi)+\|\varphi-\psi\|.
\]
Equivalently, $|P^B(f_{1,\infty},K,\varphi)-P^B(f_{1,\infty},K,\psi)|\leq \|\varphi-\psi\|$.

(3) This assertion follows from the observation that $\exp(-sn_i+S_{n_i}\varphi(x_i)+n_i c)=\exp(n_i(c-s)+S_{n_i}\varphi(x_i))$, which completes the proof.

(4) Recall that if $\{a_i\}_i$ is a family of positive numbers with $\sum_{i}a_i=1$, then $\sum_i a_i^c\leq 1$ for $c \geq 1$, and $\sum_i a_i^c \geq 1$ for $c \leq 1$. 
Thus, for $c \geq 1$,
\begin{align*}
   \sum_i e^{-sn_i+S_{n_i}(c\varphi)(x_i)}&=\sum_i e^{-sn_i+ c\cdot S_{n_i}\varphi(x_i)}\\
    &=\sum_i \left(e^{-\frac{s}{c} n_i+S_{n_i}\varphi(x_i)}\right)^c\\
    &\leq \left(\sum_i e^{-\frac{s}{c}n_i+S_{n_i}\varphi(x_i)}\right)^c.
\end{align*}
Consequently, 
\[
   M(f_{1,\infty},K,c\varphi,\varepsilon,s,N) \leq M(f_{1,\infty},K,\varphi,\varepsilon,s/c,N)^c
\]
and letting $N\to\infty$ yields $P^B(f_{1,\infty},K,c \varphi)\leq c\cdot P^B(f_{1,\infty},K,\varphi)$. 
Similarly, if $c \leq 1$, then $P^B(f_{1,\infty},K,c \varphi) \geq c\cdot P^B(f_{1,\infty},K,\varphi)$.

(5) Since $-|\varphi| \leq \varphi \leq |\varphi|$, we have
\[
  P^B(f_{1,\infty},K,-|\varphi|)\leq P^B(f_{1,\infty},K,\varphi)\leq P^B(f_{1,\infty},K,|\varphi|).
\]
By part (4), it follows that 
\[
  P^B(f_{1,\infty},K,-|\varphi|) \geq -P^B(f_{1,\infty},K,|\varphi|).
\] 
Therefore, $|P^B(f_{1,\infty},K,\varphi)|\leq P^B(f_{1,\infty},K,|\varphi|)$.
\end{proof}
\medskip

We denote by
\begin{equation}\label{Lambda}
    \Lambda_{\varepsilon,N}(f_{1,\infty},K,\varphi)=\inf\left\{\sum_{i}\exp(S_N\varphi(x_i)) \right\},
\end{equation}
where the infimum is taken over all finite or countable collections of $\{B_N(x_i,\varepsilon)\}_i$ such that $x_i\in X$ and $K\subseteq \bigcup_i B_N(x_i,\varepsilon)$.

\begin{proposition}
Let $(X,f_{1,\infty})$ be an NDS, $K$ a nonempty subset of $X$, and $\varphi\in C(X,\mathbb{R})$. 
By the Carath{\'e}odory-Pesin structure, then
\begin{align*}
\overline{CP}(f_{1,\infty},K,\varphi)&=\lim_{\varepsilon\to 0}\limsup_{N\to\infty}\frac{1}{N}\log \Lambda_{\varepsilon,N}(f_{1,\infty},K,\varphi),\\
\underline{CP}(f_{1,\infty},K,\varphi)&=\lim_{\varepsilon\to 0}\liminf_{N\to\infty}\frac{1}{N}\log \Lambda_{\varepsilon,N}(f_{1,\infty},K,\varphi).
\end{align*}
\end{proposition}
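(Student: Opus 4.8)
The plan is to reduce the whole statement to one elementary identity between the Carath\'eodory--Pesin quantity $R$ and $\Lambda_{\varepsilon,N}$. Comparing the definition of $R(f_{1,\infty},K,\varphi,\varepsilon,s,N)$ with \eqref{Lambda}, the factor $\exp(-sN)$ is constant along every admissible cover $\{B_N(x_i,\varepsilon)\}_i$ of $K$ and hence factors out of the infimum, so that $R(f_{1,\infty},K,\varphi,\varepsilon,s,N)=e^{-sN}\,\Lambda_{\varepsilon,N}(f_{1,\infty},K,\varphi)$. Since $X$ is compact, $K$ always admits a finite cover by Bowen balls $B_N(x_i,\varepsilon)$, so $0<\Lambda_{\varepsilon,N}(f_{1,\infty},K,\varphi)<\infty$ and the quantities $a(\varepsilon):=\limsup_{N\to\infty}\frac1N\log\Lambda_{\varepsilon,N}(f_{1,\infty},K,\varphi)$ and $b(\varepsilon):=\liminf_{N\to\infty}\frac1N\log\Lambda_{\varepsilon,N}(f_{1,\infty},K,\varphi)$ are well defined in $\mathbb{R}$.

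First I would fix $\varepsilon>0$ and prove $\overline{CP}(f_{1,\infty},K,\varphi,\varepsilon)=a(\varepsilon)$ by a two-sided comparison at the critical value. If $s>a(\varepsilon)$, I choose $\delta>0$ with $s-\delta>a(\varepsilon)$; then $\Lambda_{\varepsilon,N}\le e^{(s-\delta)N}$ for all large $N$, so $\overline{r}(f_{1,\infty},K,\varphi,\varepsilon,s)=\limsup_{N\to\infty}e^{-sN}\Lambda_{\varepsilon,N}\le\limsup_{N\to\infty}e^{-\delta N}=0$, which forces $s\ge\overline{CP}(f_{1,\infty},K,\varphi,\varepsilon)$. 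Conversely, if $s<a(\varepsilon)$, I pick $\delta>0$ and a subsequence $N_k\uparrow\infty$ along which $\frac1{N_k}\log\Lambda_{\varepsilon,N_k}>s+\delta$; then $e^{-sN_k}\Lambda_{\varepsilon,N_k}>e^{\delta N_k}\to\infty$, so $\overline{r}(f_{1,\infty},K,\varphi,\varepsilon,s)=+\infty$ and therefore $s\le\overline{CP}(f_{1,\infty},K,\varphi,\varepsilon)$. Letting $s\downarrow a(\varepsilon)$ and $s\uparrow a(\varepsilon)$ in turn yields the equality for this $\varepsilon$.

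The lower-capacity case goes through verbatim with $\limsup_N$ replaced by $\liminf_N$: if $s>b(\varepsilon)$ there is a subsequence along which $e^{-sN_k}\Lambda_{\varepsilon,N_k}\to 0$, so $\underline{r}(f_{1,\infty},K,\varphi,\varepsilon,s)=0$ (the quantity being a $\liminf$ of nonnegative terms), whence $s\ge\underline{CP}(f_{1,\infty},K,\varphi,\varepsilon)$; if $s<b(\varepsilon)$ then $e^{-sN}\Lambda_{\varepsilon,N}\to\infty$ for all large $N$, so $\underline{r}(f_{1,\infty},K,\varphi,\varepsilon,s)=+\infty$ and $s\le\underline{CP}(f_{1,\infty},K,\varphi,\varepsilon)$. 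Hence $\underline{CP}(f_{1,\infty},K,\varphi,\varepsilon)=b(\varepsilon)$ for each $\varepsilon>0$.

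Finally I would let $\varepsilon\to 0$: since $\Lambda_{\varepsilon,N}$ is nonincreasing in $\varepsilon$ (shrinking $\varepsilon$ only restricts the admissible covers, with the same centers still available), the functions $a(\varepsilon)$ and $b(\varepsilon)$ are monotone in $\varepsilon$, so $\lim_{\varepsilon\to 0}a(\varepsilon)$ and $\lim_{\varepsilon\to 0}b(\varepsilon)$ exist in $[0,+\infty]$; combined with the definitions $\overline{CP}(f_{1,\infty},K,\varphi)=\lim_{\varepsilon\to 0}\overline{CP}(f_{1,\infty},K,\varphi,\varepsilon)$ and $\underline{CP}(f_{1,\infty},K,\varphi)=\lim_{\varepsilon\to 0}\underline{CP}(f_{1,\infty},K,\varphi,\varepsilon)$ this gives both claimed identities. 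I do not expect a genuine obstacle here; the only points that need a sentence of care are the compactness input guaranteeing $0<\Lambda_{\varepsilon,N}<\infty$, the nonnegativity of $\overline{r}$ and $\underline{r}$ so that an inequality ``$\le 0$'' upgrades to ``$=0$'', and the monotonicity in $\varepsilon$ that underwrites the existence of the outer limits.
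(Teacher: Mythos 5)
Your proof is correct and follows essentially the same route as the paper's: both rest on the identity $R(f_{1,\infty},K,\varphi,\varepsilon,s,N)=e^{-sN}\Lambda_{\varepsilon,N}(f_{1,\infty},K,\varphi)$ and a two-sided comparison of $s$ against the critical value, with the $\liminf$ case handled verbatim (the paper merely asserts it is similar). The extra remarks you include — compactness giving $0<\Lambda_{\varepsilon,N}<\infty$ and monotonicity in $\varepsilon$ justifying the outer limits — are sound and, if anything, make the argument slightly more careful than the paper's.
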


\begin{proof}
We only need to prove the statement for the upper capacity topological pressure, namely,
\[
  \overline{CP}(f_{1,\infty},K,\varphi,\varepsilon) =\limsup_{N\to\infty}\frac{1}{N}\log \Lambda_{\varepsilon,N}(f_{1,\infty},K,\varphi).
\]
The corresponding result for the lower capacity topological pressure can be proved in an analogous manner.

Let
\[
  s=\overline{CP}(f_{1,\infty},K,\varphi,\varepsilon) \quad \text{and}\quad t=\limsup_{N\to\infty}\frac{1}{N}\log \Lambda_{\varepsilon,N}(f_{1,\infty},K,\varphi).
\]
It follows directly from the definition that
\begin{equation}\label{rlambda}
R(f_{1,\infty},K,\varphi,\varepsilon,s,N)=e^{-sN}\cdot \Lambda_{\varepsilon,N}(f_{1,\infty},K,\varphi).
\end{equation}
Fix a small number $\gamma>0$. By the definition of $\overline{CP}(f_{1,\infty},K,\varphi,\varepsilon)$, there exists a sequence of positive integers $\{N_n\}$ such that
\[
\overline{r}(f_{1,\infty},K,\varphi,\varepsilon,s+\gamma)=\lim_{n\to\infty}R(f_{1,\infty},K,\varphi,\varepsilon,s+\gamma,N_n)=0.
\]
Consequently, for sufficiently large $n$, $R(f_{1,\infty},K,\varphi,\varepsilon,s+\gamma,N_n) \leq 1$. 
Using \eqref{rlambda}, we obtain
\[
\Lambda_{\varepsilon,N_n}(f_{1,\infty},K,\varphi)\cdot (e^{-N_n})^{s+\gamma}\leq 1.
\]
Since $e^{-N_n}<1$ for large $n$, this implies
\[
\frac{1}{N_n}\log \Lambda_{\varepsilon,N_n}(f_{1,\infty},K,\varphi)\leq s+\gamma.
\]
Therefore,
\begin{equation}\label{left}
t \leq \limsup_{n\to\infty} \frac{1}{N_n}\log \Lambda_{\varepsilon,N_n}(f_{1,\infty},K,\varphi)\leq s+\gamma.
\end{equation}

\medskip

Next, choose another sequence $\{N_n'\}$ such that
\[
\lim_{n\to \infty}R(f_{1,\infty},K,\varphi,\varepsilon,s-\gamma,N_n')\geq \overline{r}(f_{1,\infty},K,\varphi,\varepsilon,s-\gamma)=\infty.
\]
Then there exists $n_0$ such that, for all $n \geq n_0$, $R(f_{1,\infty},K,\varphi,\varepsilon,s-\gamma,N_n')\geq 1$. 
By \eqref{rlambda}, this yields
\[
\Lambda_{\varepsilon,N_n'}(f_{1,\infty},K,\varphi)\cdot (e^{-N_n'})^{s-\gamma}\geq 1,
\]
or equivalently,
\[
\frac{1}{N_n'}\log \Lambda_{\varepsilon,N_n'}(f_{1,\infty},K,\varphi)\geq s-\gamma.
\]
Hence
\begin{equation}\label{right}
t = \limsup_{n\to\infty} \frac{1}{N_n'}\log \Lambda_{\varepsilon,N_n'}(f_{1,\infty},K,\varphi)\geq s-\gamma.
\end{equation}
Combining \eqref{left} and \eqref{right}, and using the arbitrariness of $\gamma$, we conclude that $s=t$. That is, $\overline{CP}(f_{1,\infty},K,\varphi,\varepsilon) =\limsup_{N\to\infty}\frac{1}{N}\log \Lambda_{\varepsilon,N}(f_{1,\infty},K,\varphi)$.
\end{proof}

We have introduced several topological pressures on subsets so far, the immediate \Cref{relationship} describes the relationship among the Pesin pressure, packing pressure, lower and upper capacity pressures, as well as the classical pressure for nonautonomous dynamical systems.

\begin{theorem}\label{relationship}
Let $(X,f_{1,\infty})$ be an NDS, $K$ a nonempty subset of $X$, and $\varphi \in C(X,\mathbb{R})$. Then
\begin{enumerate}
\item[(1)] $P^B(f_{1,\infty},K,\varphi)\leq \underline{CP}(f_{1,\infty},K,\varphi)\leq \overline{CP}(f_{1,\infty},K,\varphi)$;

\item[(2)] $P^B(f_{1,\infty},K,\varphi)\leq P^P(f_{1,\infty},K,\varphi)\leq P(f_{1,\infty},K,\varphi)$;

\item[(3)] $P^P(f_{1,\infty},K,\varphi)\leq \overline{CP}(f_{1,\infty},K,\varphi)$;

\item[(4)] $\overline{CP}(f_{1,\infty},K,\varphi)=P(f_{1,\infty},K,\varphi)$.
\end{enumerate}
\end{theorem}

\begin{proof}
We assume throughout that the pressures under consideration are not equal to $- \infty$; otherwise, there is nothing to prove.

(1) The first assertion follows directly from the definition of Pesin topological pressure.

(2) We first prove the inequality $P^B(f_{1,\infty},K,\varphi)\leq P^P(f_{1,\infty},K,\varphi)$. 
Following the approach of \cite[Theorem 5.12]{Mat2004}, fix a real number $s$ such that $-\infty<s<P^B(f_{1,\infty},K,\varphi)$. 
For any fixed $n$ and $\varepsilon$, let $\gamma=R_n(K,\varepsilon)$ denote the largest number so that there exists a disjoint family $\{\overline{B}_{n_i}(x_i,\varepsilon)\}_{i=1}^{\gamma}$ with $x_i\in K$. 
Such a family exists since $X$ is compact.
Then, for any $\delta>0$,
\[
K\subseteq \bigcup_{i=1}^{\gamma} \overline{B}_{n_i}(x_i,2\varepsilon+\delta).
\]
By \eqref{Mff}, for any $s>0$ and any $N\in \mathbb{N}$,
\begin{align*}
M(f_{1,\infty},K,\varphi,2\varepsilon+\delta,s,N)&\leq e^{-sN}\sum_{i=1}^{\gamma}\exp(S_{n_i}\varphi(x_i))\\
&\leq \mathcal{M}^P(f_{1,\infty},K,\varphi,\varepsilon,s,N).
\end{align*}
Letting $N\to\infty$, we obtain
\begin{equation}\label{mmcM}
M(f_{1,\infty},K,\varphi,2\varepsilon+\delta,s)\leq \mathcal{M}^{\widetilde{P}}(f_{1,\infty},K,\varphi,\varepsilon,s).
\end{equation}
Since $-\infty<s<P^B(f_{1,\infty},K,\varphi)$, it follows from the definition of $P^B(f_{1,\infty},K,\varphi)$ that $M(f_{1,\infty},K,\varphi,2\varepsilon+\delta)=+\infty$. 
Hence, for sufficiently small $\varepsilon$ and $\delta$, $M(f_{1,\infty},K,\varphi,2\varepsilon+\delta,s)\geq 1$,
and consequently, $\mathcal{M}^{\widetilde{P}}(f_{1,\infty},K,\varphi,2\varepsilon+\delta,s)\geq 1$. 
Therefore, $s\leq P^P(f_{1,\infty},K,\varphi,\varepsilon)$ for all sufficiently small $\varepsilon$. 
Letting $\varepsilon\to 0$, we conclude that $s \leq P^P(f_{1,\infty},K,\varphi)$.
Since $s<P^B(f_{1,\infty},K,\varphi)$ is arbitrary, this yields $P^B(f_{1,\infty},K,\varphi) \leq P^P(f_{1,\infty},K,\varphi)$.

We now prove the second inequality $P^P(f_{1,\infty},K,\varphi)\leq P(f_{1,\infty},K,\varphi)$. 
Following the method of \cite{FH2012,ZC2023}, choose real numbers $t,s$ such that 
\[
  -\infty< t<s<P^P(f_{1,\infty},K,\varphi).
\] 
Then there exists $\delta>0$ such that, for any $\varepsilon\in (0,\delta)$, 
\[
  P^P(f_{1,\infty},K,\varphi,\varepsilon)>s \quad \text{and}\quad \mathcal{M}^P(f_{1,\infty},K,\varphi,\varepsilon)\geq \mathcal{M}^{\widetilde{P}}(f_{1,\infty},K,\varphi,\varepsilon)=\infty.
\]
For any $N>0$ there exists a countable pairwise disjoint family $\{\overline{B}_{n_i}(x_i,\varepsilon)\}$ such that $x_i\in K$, $n_i\geq N$ for all $i$ and $\sum_i\exp(-sn_i+S_{n_i}\varphi(x_i))>1$. 
For each $j$, let $m_j=\{x_i:n_i=j\}$. 
Then
\[
\sum_{j=N}^{\infty}\sum_{x_i\in m_j}\exp(-js)\cdot\exp(S_j\varphi(x_i))>1.
\]
We claim that there exists $j\geq N$ such that
\begin{equation}\label{ejt}
\sum_{x_i\in m_j}\exp(S_j\varphi(x_i))\geq e^{jt}(1-e^{t-s}).
\end{equation}
Indeed, if this is not the case, then
\begin{align*}
\sum_{j=N}^{\infty}\sum_{x_i\in m_j}\exp(-js)\cdot\exp(S_j\varphi(x_i)) &\leq (1-e^{t-s})\cdot \sum_{j=N}^{\infty}e^{-js}\cdot e^{jt}\\
&=(1-e^{t-s})\cdot e^{(t-s)N}\sum_{j=0}^{\infty}\big(e^{t-s}\big)^j\\
&\leq e^{(t-s)N}<1,
\end{align*}
which is a contradiction.

Now let $E$ be a $(j,\varepsilon)$-separated set for $K$. 
By the construction of the family $\{\overline{B}_{n_i}(x_i,\varepsilon)\}$,
\[
\sum_{x_i\in E}\exp(S_j\varphi(x_i))\geq \sum_{x_i\in m_j}\exp(S_j(\varphi(x_i)))\geq e^{jt}\cdot(1-e^{t-s}).
\]
Therefore,
\begin{align*}
P(f_{1,\infty},K,\varphi,\varepsilon)&=\limsup_{j\to\infty}\frac{1}{j}\log P_j(f_{1,\infty},K,\varphi,\varepsilon)\\
&\geq \limsup_{j\to\infty}\frac{1}{j}\log \sum_{x_i\in E}\exp(S_j\varphi(x_i))\\
&\geq \limsup_{j\to\infty}\frac{1}{j}\log\big[e^{jt}\cdot(1-e^{t-s})\big] \quad\text{(by \eqref{ejt}) }\\
&\geq \limsup_{j\to\infty} \frac{1}{j}\big[jt+\log(1-e^{t-s})\big]\\
&\geq t.
\end{align*}
Letting $\varepsilon\to 0$, we obtain
$P(f_{1,\infty},K,\varphi)\geq t$. 
Since $t<s<P^P(f_{1,\infty},K,\varphi)$ is arbitrary, this implies that $P^P(f_{1,\infty},K,\varphi)\leq P(f_{1,\infty},K,\varphi)$.

(3) We assume that $-\infty<t<s<P^P(f_{1,\infty},K,\varphi)$. 
By part (2), we also have \eqref{ejt}. 
Fix a collection of Bowen balls $\{B_j(y_i,\frac{\varepsilon}{2})\}_{i\in \mathcal{I}}$ such that $K\subseteq \bigcup_{i\in \mathcal{I}} B_j(y_i,\frac{\varepsilon}{2})$. 
For any two distinct points $x_1,x_2\in m_j$, there exist $y_1,y_2$ such that $x_1\in B_j(y_1,\frac{\varepsilon}{2})$ and $x_2\in B_j(y_2,\frac{\varepsilon}{2})$. 
Consequently, by the definition of $R(f_{1,\infty},K,\varphi,\frac{\varepsilon}{2},t,j)$, we have
\begin{align*}
R(f_{1,\infty},K,\varphi,\frac{\varepsilon}{2},t,j)&=\inf\{\sum_i\exp(-jt+S_j\varphi(x_i))\}\\
&\geq \sum_{x_i\in m_j} e^{-jt}e^{S_j\varphi(x_i)}\\
&=e^{-jt}\sum_{x_i\in m_j}e^{S_j\varphi(x_i)}\\
&\geq e^{-jt}\cdot e^{jt}\cdot (1-e^{t-s})\quad \text{(by \eqref{ejt})}\\
&=1-e^{t-s}>0.
\end{align*}
Therefore, 
\[
\overline{r}(f_{1,\infty},K,\varphi,\frac{\varepsilon}{2},t)=\limsup_{j\to\infty}R(f_{1,\infty},K,\varphi,\frac{\varepsilon}{2},t,j) \geq 1-e^{t-s}> 0,
\]
which implies $\overline{CP}(f_{1,\infty},K,\varphi,\frac{\varepsilon}{2})\geq t$. 
Since $t<s<P^P(f_{1,\infty},K,\varphi)$ is arbitrary, we conclude that $P^P(f_{1,\infty},K,\varphi)\leq \overline{CP}(f_{1,\infty},K,\varphi)$.

(4) By \eqref{Lambda}, the infimum of $\Lambda_{\varepsilon,N}(f_{1,\infty},K,\varphi)$ is taken over all finite or countable collections of open Bowen balls $\{B_N(x_i,\varepsilon)\}$ that cover $K$. 
Each such ball $B_N(x_i,\varepsilon)$ is also a $(N,\varepsilon)$-spanning set for $K$. Consequently, the minimal numbers of such open balls coincides with $r_N(f_{1,\infty},K,\varepsilon)$, the smallest cardinality of any $(N,\varepsilon)$-spanning set for $K$. 
Therefore, by the definitions of the classical topological pressure and the upper capacity pressure, we have $Q_N(f_{1,\infty},K,\varphi,\varepsilon)=\Lambda_{\varepsilon,N}(f_{1,\infty},K,\varphi)$. 
It follows that $\overline{CP}(f_{1,\infty},K,\varphi)=P(f_{1,\infty},K,\varphi)$.
\end{proof}

\begin{remark}
When $(X,f_{1,\infty})$ reduces to the classical dynamical system $(X,f)$ and $K$ is a compact $f$-invariant subset of $X$, Feng and Huang~\cite{FH2012} showed that
\[
  P^B(f,K,\varphi)=P^P(f,K,\varphi)=\underline{CP}(f,K,\varphi)=\overline{CP}(f,K,\varphi)=P(f,K,\varphi).
\] 
In contrast, due to the intrinsic characteristics of NDS, such an equality between the lower and upper capacity pressures cannot hold, in general, even when $K$ is $f_{1,\infty}$-invariant.
\end{remark}

\begin{corollary}\label{forentropy}
Let $(X,f_{1,\infty})$ be an NDS, and $K$ a nonempty subset of $X$. Then
\begin{enumerate}
  \item[(1)] $h_{top}^B(f_{1,\infty},K)\leq \underline{Ch}(f_{1,\infty},K)\leq \overline{Ch}(f_{1,\infty},K)$;
  \item[(2)] $h_{top}^B(f_{1,\infty},K)\leq h_{top}^{P}(f_{1,\infty},K)\leq h(f_{1,\infty},K)$;
  \item[(3)] $h_{top}^P(f_{1,\infty},K)\leq \overline{Ch}(f_{1,\infty},K)$;
  \item[(4)] $\overline{Ch}(f_{1,\infty},K)=h(f_{1,\infty},K)$.
\end{enumerate}
\end{corollary}

\begin{lemma}\label{lipschitz}
Assume that $P(f_{1,\infty},K,\varphi)<+\infty$ for any $\varphi\in C(X,\mathbb{R})$. 
Then for any $\varphi,\psi\in C(X,\mathbb{R})$,
\begin{align*}
|P^B(f_{1,\infty},K,\varphi)-P^B(f_{1,\infty},K,\psi)| &\leq \|\varphi-\psi\|,\\
|P^P(f_{1,\infty},K,\varphi)-P^P(f_{1,\infty},K,\psi)| &\leq \|\varphi-\psi\|,
\end{align*}
where $\|\varphi\|:=\sup_{x\in X}|\varphi(x)|$.
\end{lemma}

\begin{proof}
This result can follows from \Cref{relationship} and \cite[Theorem 9.7]{Wal2000a}, see also \Cref{prop3}, so we omit it.
\end{proof}

\begin{theorem}[Bowen's equation]
If $h(f_{1,\infty})<\infty$, then the functions $t\mapsto P^B(f_{1,\infty},K,t\varphi)$ and $t\mapsto P^P(f_{1,\infty},K,t\varphi)$ are strictly decreasing and Lipschitz continuous. 
Furthermore, there exist unique roots $t_B$ and $t_P$ of the respective equations
\[
P^B(f_{1,\infty},K,t\varphi)=0 \quad \text{and}\quad P^P(f_{1,\infty},K,t\varphi)=0,
\]
which satisfy
\[
0\leq t_B\leq t_P<\infty.
\]
\end{theorem}

\begin{proof}
The proof is inspired by the argument in \cite{Bar1996}. 
Note that $X$ is compact and $\varphi\in C(X,\mathbb{R})$, so that $\|\varphi\|<\infty$. 
It is straightforward to verify that, for $t\in\mathbb{R}$,
\begin{align*}
h_{top}^B(f_{1,\infty},K)-\|t\varphi\|&\leq P^B(f_{1,\infty},K,t\varphi)\leq h_{top}^B(f_{1,\infty},K)+\|t\varphi\|,\\
h_{top}^P(f_{1,\infty},K)-\|t\varphi\|&\leq P^P(f_{1,\infty},K,t\varphi)\leq h_{top}^P(f_{1,\infty},K)+\|t\varphi\|.
\end{align*}
Furthermore, since $h(f_{1,\infty})$ is finite and $h(f_{1,\infty},K)\leq h(f_{1,\infty})$ for any subset $K$ of $X$, it follows from part (2) of \Cref{forentropy} that these two functions are finite.

By \Cref{lipschitz}, we conclude that these pressure functions are Lipschitz continuous. 
If $t'\geq t$, then
\begin{align}
-(t'-t)\|\varphi\|\leq P^B(f_{1,\infty},K,t\varphi)-P^B(f_{1,\infty},K,t'\varphi)&\leq (t'-t)\|\varphi\|,\label{pesin}\\
-(t'-t)\|\varphi\|\leq P^P(f_{1,\infty},K,t\varphi)-P^P(f_{1,\infty},K,t'\varphi)&\leq (t'-t)\|\varphi\|.\label{packing}
\end{align}
Thus, these inequalities imply that $P^B(f_{1,\infty},K,t\varphi)$ and $P^P(f_{1,\infty},K,t\varphi)$ are strictly decreasing, and hence there exist unique roots $t_B$ and $t_P$ of the respective equations $P^B(f_{1,\infty},K,t\varphi)=0$ and $P^P(f_{1,\infty},K,t\varphi)=0$. 
It is clear by \Cref{relationship} that $t_B\leq t_P$.

Now, observe that $P^B(f_{1,\infty},K,0)=h_{top}^B(f_{1,\infty},K)\geq 0$. Setting $t=0$ and $P^B(f_{1,\infty},K,t'\varphi)=0$ in \eqref{pesin}, we obtain
\[
  -t_B \|\varphi\|\leq h_{top}^B(f_{1,\infty},K)\leq t_B\|\varphi\|.
\]
Thus, $t_B\geq 0$. 
Similarly, observe that $P^P(f_{1,\infty},K,0)=h_{top}^P(f_{1,\infty},K)\leq h(f_{1,\infty})<\infty$. Setting $t=0$ and $P^P(f_{1,\infty},K,t'\varphi)=0$ in \eqref{packing} yields
\[
  -t_P \|\varphi\|\leq h_{top}^P(f_{1,\infty},K)\leq t_P\|\varphi\|.
\]
Thus, we conclude that $t_P<\infty$.
\end{proof}

\begin{proposition}[Inverse variational principle]\label{ppmeasure}
Let $(X,f_{1,\infty})$ be an NDS, $K$ a nonempty subset of $X$, and $\varphi\in C(X,\mathbb{R})$. 
The Pesin pressure and the packing pressure of measure $\mu$ with respect to $\varphi$ can also be defined as
\begin{align*}
P_{\mu}^B(f_{1,\infty},\varphi) &=\inf\{P^B(f_{1,\infty},K,\varphi):\mu(K) = 1\},\\
P_{\mu}^P(f_{1,\infty},\varphi) &=\inf\{P^P(f_{1,\infty},K,\varphi):\mu(K) = 1\}.
\end{align*}
However,
\begin{align*}
\underline{CP}_{\mu}(f_{1,\infty},\varphi) &\leq \inf\{\underline{CP}(f_{1,\infty},K,\varphi):\mu(K) = 1\},\\
\overline{CP}_{\mu}(f_{1,\infty},\varphi) &\leq \inf\{\overline{CP}(f_{1,\infty},K,\varphi):\mu(K) = 1\},
\end{align*}
and the strict inequalities may occur.
\end{proposition}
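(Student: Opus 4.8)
The plan is to separate the claim into three pieces: the easy inclusion (which gives all four ``$\leq$'' relations at once, including the two capacity inequalities and the ``$\leq$'' halves of the two equalities), the reverse inclusion for $P^B$ and $P^P$ via countable stability, and an explicit example showing that the capacity inequalities can be strict. For the easy part, note that $\{K\in\mathcal{B}(X):\mu(K)=1\}\subseteq\{K\in\mathcal{B}(X):\mu(K)\geq 1-\delta\}$ for every $\delta\in(0,1)$, so that for each $\mathcal{P}\in\{P^B,P^P,\underline{CP},\overline{CP}\}$ one has $\inf\{\mathcal{P}(f_{1,\infty},K,\varphi):\mu(K)\geq 1-\delta\}\leq\inf\{\mathcal{P}(f_{1,\infty},K,\varphi):\mu(K)=1\}$. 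As $\delta\downarrow 0$ the left-hand side is nondecreasing, so the limits defining $P_\mu^B$, $P_\mu^P$, $\underline{CP}_\mu$, $\overline{CP}_\mu$ in \Cref{pressureofmu} exist and equal the corresponding suprema over $\delta$; letting $\delta\to 0$ therefore gives $P_\mu^B(f_{1,\infty},\varphi)\leq\inf\{P^B(f_{1,\infty},K,\varphi):\mu(K)=1\}$ and the three analogous inequalities, which settles the two capacity inequalities and one direction of the two equalities.

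For the reverse inequality in the $P^B$ statement (the $P^P$ statement is word-for-word the same with \Cref{propertiesp} applied to $P^P$), fix $\epsilon>0$. By the previous paragraph, $\inf\{P^B(f_{1,\infty},K,\varphi):\mu(K)\geq 1-2^{-k}\}\leq P_\mu^B(f_{1,\infty},\varphi)$ for every $k\geq 1$, so choose $K_k\in\mathcal{B}(X)$ with $\mu(K_k)\geq 1-2^{-k}$ and $P^B(f_{1,\infty},K_k,\varphi)<P_\mu^B(f_{1,\infty},\varphi)+\epsilon$. Set $A_n=\bigcap_{k\geq n}K_k$, so that $\mu(A_n)\geq 1-2^{-(n-1)}$, the sets $A_n$ increase with $n$, and $A_n\subseteq K_n$; by monotonicity (\Cref{propertiesp}(1)), $P^B(f_{1,\infty},A_n,\varphi)\leq P^B(f_{1,\infty},K_n,\varphi)<P_\mu^B(f_{1,\infty},\varphi)+\epsilon$ for all $n$. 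Put $\widetilde{K}=\bigcup_{n\geq 1}A_n$; then $\mu(\widetilde{K})=\lim_n\mu(A_n)=1$, while countable stability (\Cref{propertiesp}(2)) yields $P^B(f_{1,\infty},\widetilde{K},\varphi)=\sup_{n\geq 1}P^B(f_{1,\infty},A_n,\varphi)\leq P_\mu^B(f_{1,\infty},\varphi)+\epsilon$. Hence $\inf\{P^B(f_{1,\infty},K,\varphi):\mu(K)=1\}\leq P_\mu^B(f_{1,\infty},\varphi)+\epsilon$, and letting $\epsilon\to 0$ completes the equality; the argument for $P^P$ is identical.

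This argument collapses for $\underline{CP}$ and $\overline{CP}$ precisely because \Cref{propertiesp}(2) only asserts $\overline{CP}(f_{1,\infty},\bigcup_nA_n,\varphi)\geq\sup_n\overline{CP}(f_{1,\infty},A_n,\varphi)$ (and likewise for $\underline{CP}$), so $\overline{CP}(f_{1,\infty},\widetilde{K},\varphi)$ need not remain below the bound. To see strictness genuinely occurs, take $X=\{0,1\}^{\mathbb{N}}$ with the shift $\sigma$, put $f_{1,\infty}=(\sigma,\sigma,\dots)$ and $\varphi=0$, let $\{\omega_k\}_{k\geq 1}$ enumerate a countable dense set of pairwise distinct points of $X$, and set $\mu=\sum_{k\geq 1}2^{-k}\delta_{\omega_k}$. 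Given $\delta\in(0,1)$, pick $N$ with $2^{-N}<\delta$; the finite set $F=\{\omega_1,\dots,\omega_N\}$ satisfies $\mu(F)=1-2^{-N}\geq 1-\delta$ and $\overline{Ch}(f_{1,\infty},F)=\underline{Ch}(f_{1,\infty},F)=0$, so $\overline{CP}_\mu(f_{1,\infty},0)=\underline{CP}_\mu(f_{1,\infty},0)=0$. On the other hand, $\mu(K)=1$ forces $\{\omega_k:k\geq 1\}\subseteq K$, hence $K$ is dense in $X$; since a cover of a dense set by open $(N,\varepsilon)$-balls becomes a cover of $X$ once the radii are doubled, $\Lambda_{2\varepsilon,N}(f_{1,\infty},X,0)\leq\Lambda_{\varepsilon,N}(f_{1,\infty},K,0)$, whence $\overline{Ch}(f_{1,\infty},K)=\underline{Ch}(f_{1,\infty},K)=\overline{Ch}(f_{1,\infty},X)=\log 2$. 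Therefore $\inf\{\overline{CP}(f_{1,\infty},K,0):\mu(K)=1\}=\inf\{\underline{CP}(f_{1,\infty},K,0):\mu(K)=1\}=\log 2>0$, so both capacity inequalities are strict here.

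The step I expect to require the most care is the construction in the second paragraph: one must use a single $\epsilon$ for all $k$ (rather than $\epsilon_k\to 0$), so that after intersecting and then taking a countable increasing union the uniform bound $P_\mu^B(f_{1,\infty},\varphi)+\epsilon$ survives, and one must simultaneously verify $\mu(\widetilde{K})=1$ and that every $A_n$ stays below that bound — this is exactly the place where the countable stability of $P^B$ and $P^P$, which the capacity pressures lack, is indispensable. Checking the counterexample additionally relies on the routine fact that the upper and lower capacity topological entropies of a dense subset coincide with those of the ambient space.
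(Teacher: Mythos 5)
Your proof is correct and follows essentially the same route as the paper: the trivial inclusion of constraint sets gives all four ``$\leq$'' relations, and the reverse inequality for $P^B$ and $P^P$ is obtained by taking a countable union of near-optimal sets of measure $\geq 1-\delta_k$ and invoking the countable stability of \Cref{propertiesp}(2), which is exactly what the paper does (your intersection trick $A_n=\bigcap_{k\geq n}K_k$ with a single $\epsilon$ is a more careful rendering of the paper's terser ``routine'' construction). The only substantive difference is that for strictness of the capacity inequalities the paper merely cites \cite[Example 7.1]{Pes1997}, whereas you supply an explicit (and correct) full-shift example with a purely atomic measure on a countable dense set.
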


\begin{proof}
Let
\[
  P_{\mu}^{B'}(f_{1,\infty},\varphi)=\inf\{P^B(f_{1,\infty},K,\varphi):\mu(K) = 1 \}.
\]
It is immediate that
\[
{P}_{\mu}^B(f_{1,\infty},\varphi) \leq P_{\mu}^{B'}(f_{1,\infty},\varphi).
\]
To prove the reverse inequality, one can construct an increasing sequence $\{K_n\}$ such that $\mu(K_n)\geq 1-\frac{1}{n}$ for each $n$ (with $\delta=\frac{1}{n}<1$) and $P_{\mu}^{B'}(f_{1,\infty},\varphi)=\lim_{n\to\infty} P^B(f_{1,\infty},K_n,\varphi)$. 
Consequently, $P_{\mu}^{B'}(f_{1,\infty},\varphi)=\sup_n P^B(f_{1,\infty},K_n,\varphi)$. 
Let $K=\bigcup_n K_n$. By \Cref{propertiesp}, we have
\[
P_{\mu}^{B}(f_{1,\infty},\varphi)\leq P^B(f_{1,\infty},K,\varphi) = \sup_{n} P^B(f_{1,\infty},K_n,\varphi)=P_{\mu}^{B'}(f_{1,\infty},\varphi),
\]
which yields the desired inequality.

An analogous argument applies to the equivalent formulation of packing pressure.
For the lower and upper capacity pressures of $\mu$ with respect to $\varphi$, the corresponding inequalities follow directly from the definitions. Examples showing that these inequalities may be strict are given in \cite[Example 7.1]{Pes1997}.
\end{proof}

He et al. \cite{HLZ2004} introduced an alternative notion of measure-theoretic pressure for dynamical systems. 
We extend this concept to the NDS $(X,f_{1,\infty})$. 
Let $\mu\in \mathcal{M}(X)$ and $\varphi\in C(X,\mathbb{R})$. 
For $\varepsilon>0$ and $n\in \mathbb{N}$, define
\begin{align*}
P_{\mu}^*(f_{1,\infty},\varphi,\varepsilon,n)=\lim_{\delta\to 0}\inf\Big\{\sum_{x\in F}\exp(S_n\varphi(x)) &: F \text{ is an }(n,\varepsilon)\text{-spanning set of }\\
&\text{a set of } \mu\text{-measure greater than or equal to }1-\delta \Big\}.
\end{align*}
Set
\[
  P_{\mu}^*(f_{1,\infty},\varphi,\varepsilon)=\limsup_{n\to\infty}\frac{1}{n}\log P_{\mu}^*(f_{1,\infty},\varphi,\varepsilon,n),
\]
and define
\[
  P_{\mu}^*(f_{1,\infty},\varphi)=\lim_{\varepsilon\to 0}P_{\mu}^*(f_{1,\infty},\varphi,\varepsilon).
\]

Observe that $P_{\mu}^*(f_{1,\infty},\varphi,\varepsilon,n)$ admits the following equivalent representation:
\[
  P_{\mu}^*(f_{1,\infty},\varphi,\varepsilon,n)=\lim_{\delta\to 0}\inf\{Q_n(f_{1,\infty},K,\varphi,\varepsilon): \mu(K)\geq 1-\delta\}.
\]
Consequently, by \eqref{Qn}, we obtain
\[
  P_{\mu}^*(f_{1,\infty},\varphi,\varepsilon)=\lim_{\delta\to 0}\inf\{Q(f_{1,\infty},K,\varphi,\varepsilon): \mu(K)\geq 1-\delta\}.
\]

\begin{proposition}
Let $(X,f_{1,\infty})$ be an NDS, and $\varphi\in C(X,\mathbb{R})$. 
Then, for any $\mu\in \mathcal{M}(X)$,
\begin{align*}
P_{\mu}^B(f_{1,\infty},\varphi)&\leq \underline{CP}_{\mu}(f_{1,\infty},\varphi)\leq \overline{CP}_{\mu}(f_{1,\infty},\varphi),\\
P_{\mu}^B(f_{1,\infty},\varphi)
&\leq P_{\mu}^P(f_{1,\infty},\varphi)\leq \overline{CP}_{\mu}(f_{1,\infty},\varphi),\\
P_{\mu}^*(f_{1,\infty},\varphi)&=\overline{CP}_{\mu}(f_{1,\infty},\varphi)\leq P(f_{1,\infty},X,\varphi).
\end{align*}
\end{proposition}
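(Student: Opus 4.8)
The plan is to establish the three groups of (in)equalities separately, relying on the measure-level definitions in \Cref{pressureofmu} together with the subset-level relations already proven in \Cref{relationship}. The first two chains are essentially inherited from \Cref{relationship} by passing to the infimum over large-measure sets, so the bulk of the work is concentrated in the identity $P_{\mu}^*(f_{1,\infty},\varphi)=\overline{CP}_{\mu}(f_{1,\infty},\varphi)$ and the bound by the classical pressure on $X$.

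First I would handle $P_{\mu}^B\leq\underline{CP}_{\mu}\leq\overline{CP}_{\mu}$ and $P_{\mu}^B\leq P_{\mu}^P\leq\overline{CP}_{\mu}$. Fix $\delta\in(0,1)$. For every Borel set $K$ with $\mu(K)\geq 1-\delta$, \Cref{relationship}(1) gives $P^B(f_{1,\infty},K,\varphi)\leq\underline{CP}(f_{1,\infty},K,\varphi)\leq\overline{CP}(f_{1,\infty},K,\varphi)$, and \Cref{relationship}(2),(3) give $P^B(f_{1,\infty},K,\varphi)\leq P^P(f_{1,\infty},K,\varphi)\leq\overline{CP}(f_{1,\infty},K,\varphi)$. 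Taking the infimum over all such $K$ on both sides, then letting $\delta\to 0$, yields the first two chains directly from the definitions of $P_{\mu}^B$, $P_{\mu}^P$, $\underline{CP}_{\mu}$, $\overline{CP}_{\mu}$. No subtlety arises here because each inequality is preserved under $\inf$ and under the monotone limit $\delta\to 0$.

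Next I would prove $P_{\mu}^*(f_{1,\infty},\varphi)=\overline{CP}_{\mu}(f_{1,\infty},\varphi)$. The key observation, already recorded in the excerpt, is that
\[
P_{\mu}^*(f_{1,\infty},\varphi,\varepsilon)=\lim_{\delta\to 0}\inf\{Q(f_{1,\infty},K,\varphi,\varepsilon):\mu(K)\geq 1-\delta\},
\]
while by the preceding proposition $\overline{CP}(f_{1,\infty},K,\varphi,\varepsilon)=\limsup_{N\to\infty}\frac1N\log\Lambda_{\varepsilon,N}(f_{1,\infty},K,\varphi)$, and, as shown in the proof of \Cref{relationship}(4), $Q_N(f_{1,\infty},K,\varphi,\varepsilon)=\Lambda_{\varepsilon,N}(f_{1,\infty},K,\varphi)$. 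Hence $Q(f_{1,\infty},K,\varphi,\varepsilon)=\overline{CP}(f_{1,\infty},K,\varphi,\varepsilon)$ for each fixed $\varepsilon$ and each $K$. Taking the infimum over $\{K:\mu(K)\geq 1-\delta\}$, letting $\delta\to 0$, and then $\varepsilon\to 0$, gives $P_{\mu}^*(f_{1,\infty},\varphi)=\overline{CP}_{\mu}(f_{1,\infty},\varphi)$. The one point requiring care is the interchange of the two limits in $\varepsilon$ and $\delta$: since both $Q(f_{1,\infty},K,\varphi,\varepsilon)$ and $\overline{CP}(f_{1,\infty},K,\varphi,\varepsilon)$ are monotone in $\varepsilon$, and the $\delta\to 0$ limit is monotone, the order of limits is immaterial, so this causes no real difficulty.

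Finally, for $\overline{CP}_{\mu}(f_{1,\infty},\varphi)\leq P(f_{1,\infty},X,\varphi)$, I would use that $X$ itself satisfies $\mu(X)=1\geq 1-\delta$ for every $\delta$, so $\overline{CP}_{\mu}(f_{1,\infty},\varphi)\leq\overline{CP}(f_{1,\infty},X,\varphi)$, and then \Cref{relationship}(4) gives $\overline{CP}(f_{1,\infty},X,\varphi)=P(f_{1,\infty},X,\varphi)$. I expect the main obstacle to be purely bookkeeping: keeping the dependence on $\varepsilon$, $\delta$, and $N$ straight while passing through the chain of identities $Q_N=\Lambda_{\varepsilon,N}$, $Q(\cdot,\varepsilon)=\overline{CP}(\cdot,\varepsilon)$, and the two monotone limits — there is no deep new estimate needed beyond what \Cref{relationship} and the two preceding propositions already supply.
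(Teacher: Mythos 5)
Your overall strategy --- push the pointwise inequalities of \Cref{relationship} through the infimum over $\{K:\mu(K)\geq 1-\delta\}$ and the limit $\delta\to 0$ --- is exactly what the paper intends (its proof is a one-line appeal to \Cref{relationship}), and the first two chains as well as the final bound $\overline{CP}_{\mu}\leq \overline{CP}(f_{1,\infty},X,\varphi)=P(f_{1,\infty},X,\varphi)$ are handled correctly this way.

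There is, however, a genuine gap at the point you yourself flag, namely the equality $P_{\mu}^*(f_{1,\infty},\varphi)=\overline{CP}_{\mu}(f_{1,\infty},\varphi)$. Writing $g(K,\varepsilon)=Q(f_{1,\infty},K,\varphi,\varepsilon)=\overline{CP}(f_{1,\infty},K,\varphi,\varepsilon)$, the two quantities are
\[
P_{\mu}^*=\sup_{\varepsilon}\,\sup_{\delta}\,\inf_{\mu(K)\geq 1-\delta} g(K,\varepsilon),
\qquad
\overline{CP}_{\mu}=\sup_{\delta}\,\inf_{\mu(K)\geq 1-\delta}\,\sup_{\varepsilon} g(K,\varepsilon),
\]
so only the inequality $P_{\mu}^*\leq \overline{CP}_{\mu}$ is automatic ($\sup\inf\leq\inf\sup$). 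The reverse direction is a genuine minimax interchange, and monotonicity of $g$ in $\varepsilon$ alone does \emph{not} make ``the order of limits immaterial'': if $g(K_n,\varepsilon)$ equals $0$ for $\varepsilon>1/n$ and $1$ for $\varepsilon\leq 1/n$, then each $g(K_n,\cdot)$ is monotone, yet $\inf_n\sup_{\varepsilon}g=1$ while $\sup_{\varepsilon}\inf_n g=0$. The difficulty is that the near-optimal set $K$ in the definition of $P_{\mu}^*$ is allowed to depend on $\varepsilon$, whereas in $\overline{CP}_{\mu}$ a single $K$ must work for all $\varepsilon$.

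The gap is fixable by the standard intersection trick, which uses monotonicity of $g$ in the set $K$ rather than in $\varepsilon$: fix $\eta>0$ and $\delta>0$, pick $\varepsilon_j\downarrow 0$, and for each $j$ choose $K_j$ with $\mu(K_j)\geq 1-\delta 2^{-j}$ and $Q(f_{1,\infty},K_j,\varphi,\varepsilon_j)\leq P_{\mu}^*(f_{1,\infty},\varphi)+\eta$. Then $K=\bigcap_j K_j$ satisfies $\mu(K)\geq 1-\delta$ and, since $K\subseteq K_j$,
\[
\overline{CP}(f_{1,\infty},K,\varphi)=\lim_{j\to\infty}Q(f_{1,\infty},K,\varphi,\varepsilon_j)\leq \lim_{j\to\infty}Q(f_{1,\infty},K_j,\varphi,\varepsilon_j)\leq P_{\mu}^*(f_{1,\infty},\varphi)+\eta,
\]
which gives $\overline{CP}_{\mu}(f_{1,\infty},\varphi)\leq P_{\mu}^*(f_{1,\infty},\varphi)$ upon letting $\delta,\eta\to 0$. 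With this substitution your argument is complete; without it, the key step of the third chain is unjustified.
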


\begin{proof}
These inequalities follow directly from Theorem \ref{relationship}.
\end{proof}

The following pressure distribution principle for Pesin topological pressure, which resembles the mass distribution principle in fractal geometry, extends the work on the entropy distribution principle \cite[Theorem 3.6]{TV2003} and provides a method for obtaining the lower bound of Pesin topological pressure.

\begin{theorem}[Pressure distribution principle for Pesin topological pressure] \label{distribution}
Let $(X,f_{1,\infty})$ be an NDS, $\mathcal{M}(X)$ the set of all Borel probability measures on $X$, $K$ a nonempty subset of $X$, and $\varphi$ a continuous function on $X$.
Suppose that there exist $\varepsilon>0$ and $s\geq 0$ such that one can find a sequence of Borel probability measures $\{\mu_k\}\subset \mathcal{M}(X)$, a constant $C>0$ and an integer $n\in \mathbb{N}$ satisfying the following conditions:
\begin{enumerate}
\item[(1)] $\mu_k(K)>0$ for all $k$;
\item[(2)] $\limsup_{k \to\infty} \mu_k(B_n(x,\varepsilon))\leq C \exp(-ns+S_n\varphi(x))$ for each Bowen ball $B_n(x,\varepsilon)$ such that $B_n(x,\varepsilon)\cap K\neq\emptyset$;
\item[(3)] the sequence $\{\mu_k\}$ admits at least one limit measure $\mu$ satisfying $\mu(K)>0$.
\end{enumerate}
Then $P^B(f_{1,\infty},K,\varphi)\geq s$.
\end{theorem}

\begin{proof}
To prove this theorem, it suffices to show that $M(f_{1,\infty},K,\varphi,\varepsilon,s)>0$ for all sufficiently small $\varepsilon>0$ satisfying the above conditions.
To this end, let $\Gamma=\{B_{n_i}(x_i,\varepsilon)\}_i$ be a cover of $K$. Without loss of generality, we may assume that each element of $\Gamma$ satisfies $B_{n_i}(x_i,\varepsilon)\cap K \neq\emptyset$. 
Then
\begin{align*}
\sum_i \exp{(-sn_i+S_{n_i}\varphi(x_i))}&\geq \frac{1}{C}\lim_{k\to\infty}\sum_i\mu_k(B_{n_i}(x_i,\varepsilon))\\
&\geq \frac{1}{C}\lim_{k\to\infty}\mu_k\left(\bigcup_i B_{n_i}(x_i,\varepsilon) \right)\\
&\geq \frac{1}{C}\lim_{k\to\infty}\mu_k(K)\\
&\geq \frac{1}{C}\mu(K)>0.
\end{align*}
This implies that $M(f_{1,\infty},K,\varphi,\varepsilon,s,N)>0$, and hence $P^B(f_{1,\infty},K,\varphi)\geq s$.
\end{proof}

\begin{corollary}[Pressure distribution principle for packing topological pressure]
Let $(X,f_{1,\infty})$ be an NDS.
Then under the same conditions as in \Cref{distribution}, we also have the pressure distribution principle for packing topological pressure.
\end{corollary}

\begin{proof}
This result follows directly from \eqref{mmcM}, \Cref{relationship}, and \Cref{distribution}.
\end{proof}

\begin{remark}
When $\varphi=0$, a similar result for the Bowen topological entropy was discussed by Lin et al. \cite{LMYT2024} for the shrinking target problem of NDS.
\end{remark}

\section{Variational principle for packing topological pressure of subsets}
\begin{lemma}[Vitali covering lemma]\cite{Mat2004} \label{vitali}
Let $(X,d)$ be a compact metric space, and let $\mathcal{B}=\{B(x_i,r_i)\}_{i\in \mathcal{I}}$ be a family of closed or open balls in $X$. 
Then there exists a finite or countable subfamily $\mathcal{B}'=\{B(x_i,r_i)\}_{i\in \mathcal{I}'}$ of pairwise disjoint balls from $\mathcal{B}$ such that
\[
\bigcup_{B\in \mathcal{B}}B\subseteq \bigcup_{i\in \mathcal{I}'}B(x_i,5r_i).
\]
\end{lemma}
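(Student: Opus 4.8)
The plan is to run the classical greedy (``Vitali'') selection, organizing the balls by dyadic scales of their radii and using compactness of $X$ to keep the selected subfamily countable. First I would dispose of a trivial case: we may assume $R:=\sup_{i\in\mathcal{I}}r_i<\infty$, since if some ball $B(x_{i_0},r_{i_0})$ equals $X$ we just take $\mathcal{B}'=\{B(x_{i_0},r_{i_0})\}$, and then $B(x_{i_0},5r_{i_0})=X\supseteq\bigcup_{B\in\mathcal{B}}B$; otherwise every $r_i$ is at most $\operatorname{diam}(X)$. Then I would partition the index set as $\mathcal{I}=\bigsqcup_{n\ge 1}\mathcal{I}_n$ with $\mathcal{I}_n=\{\,i:R2^{-n}<r_i\le R2^{-(n-1)}\,\}$; this is a genuine partition because each $r_i>0$.

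Next I would construct $\mathcal{B}'$ inductively across scales. Let $\mathcal{G}_1$ be a maximal pairwise disjoint subfamily of $\{B(x_i,r_i):i\in\mathcal{I}_1\}$, and, having chosen $\mathcal{G}_1,\dots,\mathcal{G}_{n-1}$, let $\mathcal{G}_n$ be a maximal pairwise disjoint subfamily of those $B(x_i,r_i)$ with $i\in\mathcal{I}_n$ that are disjoint from every ball in $\mathcal{G}_1\cup\dots\cup\mathcal{G}_{n-1}$. The key observation, and the only point needing care, is that each $\mathcal{G}_n$ is automatically finite: a pairwise disjoint family of balls all of radius exceeding $R2^{-n}$ has centers that are $R2^{-n}$-separated (if $d(x,y)<r$ then $y\in B(x,r)\cap B(y,s)\ne\emptyset$), and total boundedness of the compact space $X$ forbids an infinite separated set. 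Hence each maximal family is attained by finitely many greedy steps (no transfinite/Zorn argument is needed, unlike in a general metric space), and $\mathcal{B}':=\bigcup_{n}\mathcal{G}_n$ is a countable, pairwise disjoint subfamily of $\mathcal{B}$, indexed by some $\mathcal{I}'\subseteq\mathcal{I}$.

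Finally I would verify the covering inclusion with the factor $5$. Fix any $i\in\mathcal{I}$, say $i\in\mathcal{I}_n$. If $B(x_i,r_i)\in\mathcal{B}'$ there is nothing to prove; otherwise maximality of $\mathcal{G}_n$ forces $B(x_i,r_i)$ to meet some $B(x_j,r_j)\in\mathcal{G}_1\cup\dots\cup\mathcal{G}_n\subseteq\mathcal{B}'$, with $j\in\mathcal{I}_m$ and $m\le n$, so that $r_j>R2^{-m}\ge R2^{-n}\ge r_i/2$, i.e.\ $r_i<2r_j$. Choosing $z\in B(x_i,r_i)\cap B(x_j,r_j)$ and any $y\in B(x_i,r_i)$, the triangle inequality gives $d(y,x_j)\le d(y,x_i)+d(x_i,z)+d(z,x_j)<2r_i+r_j<5r_j$, hence $B(x_i,r_i)\subseteq B(x_j,5r_j)$; this works identically for open and closed balls. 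Taking the union over $i\in\mathcal{I}$ yields $\bigcup_{B\in\mathcal{B}}B\subseteq\bigcup_{i\in\mathcal{I}'}B(x_i,5r_i)$, as claimed. There is no serious obstacle here — this is the standard proof of Mattila's lemma — and the compactness hypothesis enters precisely at the finiteness-of-each-scale step.
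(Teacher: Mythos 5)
The paper does not prove this lemma; it is imported directly from Mattila \cite{Mat2004}, so there is no in-paper argument to compare against. Your proof is correct and is the standard one: reduce to $\sup_i r_i<\infty$, decompose dyadically by radius, select greedily a maximal disjoint subfamily scale by scale, and conclude with the estimate $d(y,x_j)\le 2r_i+r_j<5r_j$ coming from $r_i<2r_j$. Your use of compactness is placed exactly right: total boundedness forces each scale's disjoint subfamily to be finite (centers of disjoint balls of radius $>\rho$ are $\rho$-separated), so $\mathcal{B}'$ is countable and no Zorn-type maximality argument is needed, and the estimates treat open and closed balls uniformly. The only loose end is the implicit assumption $r_i>0$: a closed ball of radius $0$ is a singleton that your dyadic partition skips; this degenerate case is harmless and normally excluded by convention, but it is worth stating.
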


\begin{theorem}[Billingsley type theorem for packing topological pressure]\label{Billingsley}
Let $(X,f_{1,\infty})$ be an NDS, $K$ a nonempty subset of $X$, $\varphi\in C(X,\mathbb{R})$, $\mu\in \mathcal{M}(X)$, and $s\in \mathbb{R}$.
Then the following assertions hold:
\begin{enumerate}
 \item[(1)] If $\overline{P}_{\mu}(f_{1,\infty},x,\varphi)\leq s$ for all $x\in K$, then $P^P(f_{1,\infty},K,\varphi)\leq s$;

 \item[(2)] If $\overline{P}_{\mu}(f_{1,\infty},x,\varphi)\geq s$ for all $x\in K$ and $\mu(K)>0$, then $P^P(f_{1,\infty},K,\varphi)\geq s$.
 \end{enumerate}
\end{theorem}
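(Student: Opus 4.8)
The plan is to prove the two inequalities separately, following the classical Billingsley-type argument for packing measures/pressures adapted to the nonautonomous Bowen-ball setting.

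\smallskip

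\noindent\textbf{Part (1): the upper bound.} Suppose $\overline{P}_{\mu}(f_{1,\infty},x,\varphi)\le s$ for all $x\in K$. Fix $t>s$ and a small $\eta>0$; it suffices to show $P^P(f_{1,\infty},K,\varphi,\varepsilon)\le t$ for all sufficiently small $\varepsilon$. First I would stratify $K$ according to the rate at which the Brin--Katok pressure is realized: for $\ell\in\mathbb{N}$ set
\[
K_\ell=\Big\{x\in K:\ \mu(B_n(x,\varepsilon))\ge \exp(-nt+S_n\varphi(x))\ \text{for all } n\ge \ell\Big\},
\]
so that $K=\bigcup_{\ell\ge 1}K_\ell$ (using that $\overline{P}_\mu(f_{1,\infty},x,\varphi)\le s<t$ forces, for every small $\varepsilon$, the limsup to be $<t$, hence the inequality eventually). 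By the countable-stability part of \Cref{propertiesp}(2) it is enough to bound $P^P(f_{1,\infty},K_\ell,\varphi,\varepsilon)$ for each fixed $\ell$. For any subset $Z\subseteq K_\ell$ and any $N\ge \ell$, take an arbitrary pairwise-disjoint family $\{\overline{B}_{n_i}(x_i,\varepsilon)\}$ with $x_i\in Z$, $n_i\ge N$. Disjointness of the Bowen balls and the defining inequality of $K_\ell$ give
\[
\sum_i \exp(-tn_i+S_{n_i}\varphi(x_i))\le \sum_i \mu(\overline{B}_{n_i}(x_i,\varepsilon))\le \mu(X)=1.
\]
Hence $\mathcal{M}^P(f_{1,\infty},K_\ell,\varphi,\varepsilon,t,N)\le 1$ for all $N\ge\ell$, so $\mathcal{M}^P(f_{1,\infty},K_\ell,\varphi,\varepsilon,t)\le 1$ and a fortiori $\mathcal{M}^{\widetilde{P}}(f_{1,\infty},K_\ell,\varphi,\varepsilon,t)\le 1<\infty$, which yields $P^P(f_{1,\infty},K_\ell,\varphi,\varepsilon)\le t$. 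Taking the supremum over $\ell$, then $\varepsilon\to 0$, then $t\downarrow s$ finishes Part (1). (A minor technical point: the balls defining $K_\ell$ are open Bowen balls while the packing definition uses closed ones; I would absorb this by passing from $\varepsilon$ to $\varepsilon/2$ or by the usual observation that $\overline{B}_n(x,\varepsilon)\subseteq B_n(x,2\varepsilon)$, which only shifts the $\varepsilon\to0$ limit.)

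\smallskip

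\noindent\textbf{Part (2): the lower bound.} Assume $\overline{P}_{\mu}(f_{1,\infty},x,\varphi)\ge s$ for all $x\in K$ and $\mu(K)>0$. Fix $t<s$; I want $P^P(f_{1,\infty},K,\varphi,\varepsilon)\ge t$ for all small $\varepsilon$, which by the definition of $P^P$ as a critical value reduces to showing $\mathcal{M}^{\widetilde{P}}(f_{1,\infty},K,\varphi,\varepsilon,t)=\infty$, i.e.\ that for \emph{any} countable cover $K\subseteq\bigcup_j K_j$ one has $\sum_j \mathcal{M}^P(f_{1,\infty},K_j,\varphi,\varepsilon,t)=\infty$. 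Since $\mu(K)>0$ and $\mu$ is a finite measure, at least one piece, say $Z:=K_{j_0}$, has $\mu(Z)>0$ (in fact one argues: if every $\mathcal{M}^P(f_{1,\infty},K_j,\varphi,\varepsilon,t)<\infty$ we will derive a contradiction, so it suffices to show $\mathcal{M}^P(f_{1,\infty},Z,\varphi,\varepsilon,t)=\infty$ for some $Z$ in the cover with $\mu(Z)>0$). So the crux is: \emph{if $Z\subseteq K$ with $\mu(Z)>0$, then $\mathcal{M}^P(f_{1,\infty},Z,\varphi,\varepsilon,t)=\infty$ for all small $\varepsilon$.} For $x\in Z$, the hypothesis $\overline{P}_\mu(f_{1,\infty},x,\varphi)\ge s>t$ means that for every small $\varepsilon$ there are infinitely many $n$ with $\mu(B_n(x,\varepsilon))< \exp(-nt+S_n\varphi(x))$; equivalently $\exp(-nt+S_n\varphi(x))>\mu(B_n(x,\varepsilon))$ along a subsequence $n\to\infty$. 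Thus for every $x\in Z$ and every $N$, the collection $\mathcal{B}_N=\{\,\overline{B}_n(x,\varepsilon):x\in Z,\ n\ge N,\ \mu(\overline{B}_n(x,\varepsilon))<\exp(-nt+S_n\varphi(x))\,\}$ is a \emph{fine} cover of $Z$ (every point is the center of balls of arbitrarily large index $n$ in the family). Apply the Vitali covering lemma (\Cref{vitali}) to $\mathcal{B}_N$ — here is where I must be slightly careful, since $\overline{B}_n(x,\varepsilon)$ need not be an honest ball in $(X,d)$; the standard fix is to note that any maximal pairwise-disjoint subfamily $\{\overline{B}_{n_i}(x_i,\varepsilon)\}$ has the property that every $\overline{B}_n(x,\varepsilon)\in\mathcal{B}_N$ meets some $\overline{B}_{n_i}(x_i,\varepsilon)$ with $n_i\le n$, and then $\overline{B}_n(x,\varepsilon)\subseteq \overline{B}_{n_i}(x_i,3\varepsilon)$ by the triangle inequality applied along the first $n_i$ iterates, so the enlarged disjoint family covers $Z$. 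Consequently, writing $\{\overline{B}_{n_i}(x_i,3\varepsilon)\}$ for the enlargement,
\[
0<\mu(Z)\le \sum_i \mu(\overline{B}_{n_i}(x_i,3\varepsilon))\le \sum_i \mu(\overline{B}_{n_i}(x_i,\varepsilon))\cdot C
\]
is the wrong direction; instead one uses the \emph{small-measure} inequality the other way: the $\{\overline{B}_{n_i}(x_i,\varepsilon)\}$ are disjoint with $x_i\in Z$ and $\mu(\overline{B}_{n_i}(x_i,\varepsilon))<\exp(-n_it+S_{n_i}\varphi(x_i))$, while their $3\varepsilon$-enlargements cover $Z$, so
\[
\sum_i \exp(-n_it+S_{n_i}\varphi(x_i))>\sum_i \mu(\overline{B}_{n_i}(x_i,\varepsilon))\ \ \text{and also}\ \ \mu(Z)\le \sum_i\mu(\overline{B}_{n_i}(x_i,3\varepsilon)).
\]
To convert this into a lower bound on $\mathcal{M}^P(f_{1,\infty},Z,\varphi,3\varepsilon,t,N)$ that does not degenerate, I would instead run the \emph{greedy} construction standard for packing measures: extract the disjoint family directly from $\mathcal{B}_N$ \emph{restricted to balls of small measure} and use a counting/pigeonhole over the index $n=n_i$, exactly as in \eqref{ejt} in the proof of \Cref{relationship}. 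Concretely, choose a large $M$ with $\mu(\{x\in Z:\ \text{the subsequence starts before }M\})>\mu(Z)/2$, split according to the value $n_i=j$, and sum the geometric series in $j$; this produces, for every $N$, a disjoint family with $\sum_i\exp(-n_it+S_{n_i}\varphi(x_i))\ge c\,\mu(Z)$ for a constant $c=c(\varepsilon,s,t)>0$ independent of $N$, and moreover one can iterate inside the leftover set $Z\setminus\bigcup_i\overline{B}_{n_i}(x_i,\varepsilon)$ to add another $\ge c\,\mu(Z)$ worth, and again, showing $\mathcal{M}^P(f_{1,\infty},Z,\varphi,3\varepsilon,t,N)=\infty$. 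Hence $\mathcal{M}^P(f_{1,\infty},Z,\varphi,3\varepsilon,t)=\infty$, so $\sum_j\mathcal{M}^P(f_{1,\infty},K_j,\varphi,3\varepsilon,t)=\infty$ for every countable cover, i.e.\ $\mathcal{M}^{\widetilde P}(f_{1,\infty},K,\varphi,3\varepsilon,t)=\infty$, whence $P^P(f_{1,\infty},K,\varphi,3\varepsilon)\ge t$; letting $\varepsilon\to0$ and $t\uparrow s$ gives (2).

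\smallskip

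\noindent\textbf{Main obstacle.} Part (1) is routine. The hard part is Part (2): making the Vitali/greedy extraction interact correctly with the twofold bookkeeping of Bowen balls (closed versus open, and the radius inflation $\varepsilon\mapsto 3\varepsilon$ or $5\varepsilon$ caused by the covering lemma, which must be reabsorbed by the final $\varepsilon\to0$), and, more substantially, ensuring the extracted disjoint family carries a \emph{uniformly positive} share of the total weight $\sum\exp(-n_it+S_{n_i}\varphi(x_i))$ for every $N$ — this is exactly where the geometric-series/pigeonhole trick of \eqref{ejt} must be reused, and where the failure of additivity of $S_n\varphi$ for NDS forces one to keep the potential attached to the \emph{centers} $x_i$ throughout rather than replacing it by a supremum over the ball (the remark after the packing-pressure definition guarantees this replacement is harmless, but the cleanest route is to avoid it). I expect the write-up of this extraction, together with the iteration that upgrades ``positive weight'' to ``infinite weight'', to be the technical heart of the proof.
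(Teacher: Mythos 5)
Your overall strategy coincides with the paper's: for (1), stratify $K$ by the rank $\ell$ at which $\mu(B_n(x,\varepsilon))\ge e^{-nt+S_n\varphi(x)}$ holds, use disjointness to bound the packing sums, and invoke the countable stability of $P^P$ from \Cref{propertiesp}; for (2), build a fine cover of a positive-measure stratum by Bowen balls of small measure, disjointify via a Vitali-type covering argument with radius inflation, and bound the packing sum below by the measure of the stratum. Part (1) is correct as written (and the open/closed ball issue you worry about actually resolves itself in the favorable direction, since $\mu(\overline{B}_{n_i}(x_i,\varepsilon))\ge\mu(B_{n_i}(x_i,\varepsilon))$). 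Two points in Part (2) need attention. First, since $\overline{P}_\mu(f_{1,\infty},x,\varphi)=\lim_{\varepsilon\to0}\limsup_n(\cdots)$ is a limit of a quantity that \emph{increases} as $\varepsilon$ decreases, the statement ``$\limsup_n>t$'' holds only for $\varepsilon$ below a threshold depending on $x$; so for a fixed $\varepsilon$ your family $\mathcal{B}_N$ is a fine cover only of the sub-stratum $Z_\varepsilon=\{x\in Z:\varepsilon_x>\varepsilon\}$, and you must first fix $\varepsilon$ small enough that $\mu(Z_\varepsilon)>0$ (continuity from below along $Z=\bigcup_m Z_{1/m}$). This is exactly the $K_m$, $K_{m,N}$ stratification the paper carries out.

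The genuine gap is your mechanism for upgrading ``positive weight'' to ``infinite weight.'' Iterating inside the leftover set $Z\setminus\bigcup_i\overline{B}_{n_i}(x_i,\varepsilon)$ does not work: only the \emph{enlarged} balls cover $Z$, so there is no lower bound on the measure of the leftover (it may well be null), and the iteration can terminate after one step. Fortunately the claim you are after is both true and obtainable more cheaply. Having produced, for every $N$, a disjoint family with $x_i\in Z_\varepsilon$, $n_i\ge N$ and $\sum_i e^{-n_is'+S_{n_i}\varphi(x_i)}\ge\mu(Z_\varepsilon)>0$ for some $s'\in(t,s)$, you get
\[
\sum_i e^{-n_it+S_{n_i}\varphi(x_i)}\;=\;\sum_i e^{(s'-t)n_i}\,e^{-n_is'+S_{n_i}\varphi(x_i)}\;\ge\;e^{(s'-t)N}\mu(Z_\varepsilon)\xrightarrow[N\to\infty]{}\infty,
\]
so $\mathcal{M}^P(Z,\varphi,\cdot,t)=\infty$ directly. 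Alternatively — and this is the paper's route — one never needs infinity at all: the uniform bound $\sum_j\mathcal{M}^P(f_{1,\infty},K_j,\varphi,\cdot,s')\ge\mu(Z_\varepsilon)>0$ over all countable covers already gives $\mathcal{M}^{\widetilde{P}}(f_{1,\infty},K,\varphi,\cdot,s')>0$, which by the definition of the critical value forces $P^P(f_{1,\infty},K,\varphi,\cdot)\ge s'$. With either repair your argument closes; the appeal to the pigeonhole trick of \eqref{ejt} is not needed here.
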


\begin{proof}
(1) Recall that 
\[
  \overline{P}_{\mu}(f_{1,\infty},x,\varphi)=\lim_{\varepsilon\to0}\limsup_{n\to\infty}\frac{1}{n}(-\log\mu(B_n(x,\varepsilon))+S_n\varphi(x)).
\] 
Fix $r>0$, and for each $m\in \mathbb{N}$ define
\[
K_m=\left\{x\in K: \limsup_{n\to\infty}\frac{1}{n}\big(-\log\mu(B_n(x,\varepsilon))+S_n\varphi(x)\big)<s+r\, \text{for all}\ \varepsilon\in \left(0,1/m\right)\right\}.
\]
Since $\overline{P}_{\mu}(f_{1,\infty},x,\varphi)\leq s$ for all $x\in K$, we have $K=\bigcup_{m\geq 1}K_m$.
Following a method of \cite{ZC2023}, fix $m$ and $\varepsilon\in (0,1/m)$. 
For each $x\in K_m$, there exists $N\in \mathbb{N}$ such that
\[
\mu\left(B_{n}(x,\varepsilon)\right)\geq e^{-n(s+r)+S_{n}\varphi(x)},\quad \forall n\geq N.
\]
For any $N\geq 1$, define
\[
  K_{m,N}=\{x \in K_m: \mu\left( B_{n}(x,\varepsilon)\right)\geq e^{-n(s+r)+S_{n}\varphi(x)},\quad \forall n\geq N\}.
\]
Then $K_m=\bigcup_{N\geq 1} K_{m,N}$.
Fix $N$ and let $N^* \geq N$. 
Consider any finite or countable disjoint family $\{\overline{B}_{n_i}(x_i,\varepsilon)\}_i$ with $x_i\in K_{m,N}$ and $n_i\geq N^*$. 
Then
\[
  \sum_i e^{-n_i(s+2r)+S_{n_i}\varphi(x_i)}=\sum_i e^{-n_i(s+r)+S_{n_i}\varphi(x_i)} \cdot e^{-n_i r}\leq e^{-r N^*}\sum_i\mu(B_{n_i}(x_i,\varepsilon))\leq e^{-r N^*}.
\]
By the arbitrariness of the disjoint family $\{\overline{B}_{n_i}(x_i,\varepsilon)\}_i$, it follows that
\[
\mathcal{M}^P(f_{1,\infty},K_{m,N},\varphi,\varepsilon,s+2r,N^*)=\sup\left\{\sum_i e^{-n_i(s+2r)+S_{n_i}\varphi(x_i)}\right \}\leq e^{- rN^*}.
\]
Letting $N^*\to \infty$, we obtain $\mathcal{M}^P(f_{1,\infty},K_{m,N},\varphi,\varepsilon,s+2r)=0$.
Consequently, 
\[
  M^{\widetilde{P}}(f_{1,\infty},K_m,\varphi,\varepsilon,s+2r)=0\quad \text{for all }\varepsilon\in (0,1/m),
\]
and hence $P^P(f_{1,\infty},K_m,\varphi)\leq s+2r$.
Since $K=\bigcup_{m \geq 1}K_m$, we conclude that
\[
  P^P(f_{1,\infty},K,\varphi)=\sup_m P^P(f_{1,\infty},K_m,\varphi)\leq s+2r.
\]
Finally, by the arbitrariness of $r$, it follows that $P^P(f_{1,\infty},K,\varphi)\leq s$.

(2) Similarly, fix $r>0$. In this part, $K$ is assumed to be a Borel set with $\mu(K)>0$, and for each $m \geq 1$ define
\[
K_m=\left\{x\in K: \limsup_{n\to\infty} \frac{1}{n}\left(-\log\mu(B_n(x,\varepsilon))+S_n\varphi(x)\right)>s-r\,\text{for all}\, \varepsilon\in (0,1/m)\right\}.
\]
Clearly, $K_m\subseteq K_{m+1}$, $K=\bigcup_{m=1}^{\infty} K_m$, and $\mu(K)=\lim_{m\to\infty} \mu(K_m)>0$.

Hence there exists $m\geq 1$ sufficiently large such that $\mu(K_m)>0$.
For such $m$, define for each $N \geq 1$
\[
K_{m,N}=\left\{x\in K_m:\limsup_{n\to\infty}\frac{1}{n}(-\log\mu(B_n(x,\varepsilon))+S_n\varphi(x))>s-r,\forall n\geq N \text{ and } \varepsilon\in (0,1/m)\right\}.
\]
It is clear that $K_{m,N}\subseteq K_{m,N+1}$, $K_m=\bigcup_{N=1}^{\infty}K_{m,N}$, and $\mu(K_m)=\lim_{N\to\infty} \mu(K_{m,N})$.

Since $\mu(K)=\lim_{m\to\infty}\lim_{N\to\infty}\mu(K_{m,N})>0$, we may choose $m$ and $N^*$ such that $\mu(K_{m,N^*})>0$.
Set $K^*=K_{m,N^*}$ and $\varepsilon^*=1/m$. By the choice of $K^*$, we have
\[
\mu(B_n(x,\varepsilon))\leq e^{-n(s-r)+S_n\varphi(x)},\quad \forall x\in K^*, 0<\varepsilon<\varepsilon^*, n\geq N.
\]
Let $N>N^*$ be sufficiently large, and let $\{\overline{B}_{n_i}(x_i,\varepsilon/5): x_i \in K^*\}_{i\geq 1}$ be an open cover of $K^*$ with $n_i\geq N\geq N^*$ and $\varepsilon\in (0,\varepsilon^*)$.
By the Vitali covering lemma (see \Cref{vitali}), there exists a finite or countable pairwise disjoint subfamily $\{\overline{B}_{n_i}(x_i,\varepsilon/5\}$ such that
\[
K^*\subseteq \bigcup_{i\geq 1}\overline{B}_{n_i}(x_i,\varepsilon/5) \subseteq \bigcup_{i\geq 1}\overline{B}_{n_i}(x_i,\varepsilon).
\]
Therefore,
\begin{align*}
\mathcal{M}^P(f_{1,\infty},K,\varphi,\varepsilon/5,s-r,N)&\geq \mathcal{M}^P(f_{1,\infty},K^*,\varphi,\varepsilon/5,s-r,N) \\
&\geq \sum_i e^{-n_i(s-r)+S_{n_i}\varphi(x_i)}\\
&\geq \sum_{i\geq 1}\mu(B_{n_i}(x_i,\varepsilon))\\
&\geq \mu(K^*)>0.
\end{align*}
Letting $N\to\infty$, we obtain 
\[
  \mathcal{M}^{\widetilde{P}}(f_{1,\infty},K,\varphi,\varepsilon/5,s-r)\geq \mu(K^*)> 0,
\] 
which implies 
\[
  P^P(f_{1,\infty},K,\varphi,\varepsilon/5)\geq s-r.
\] 
Letting $\varepsilon\to 0$, we conclude that $P^P(f_{1,\infty},K,\varphi)\geq s-r$.
Finally, by the arbitrariness of $r$, it follows that $P^P(f_{1,\infty},K,\varphi)\geq s$, which completes the proof.
\end{proof}

\begin{remark}
We also have the Billingsley type theorem for packing topological pressure when $\overline{P}_{\mu}(f_{1,\infty},x,\varphi)$ is replaced by $\underline{P}_{\mu}(f_{1,\infty},x,\varphi)$ in \Cref{Billingsley}.
\end{remark}

\begin{proposition}\label{pmupp}
Let $(X,f_{1,\infty})$ be an NDS, $\varphi\in C(X,\mathbb{R})$, and $\mu \in \mathcal{M}(X)$. Then
\[
\overline{P}_{\mu}(f_{1,\infty},\varphi)\leq P_{\mu}^P(f_{1,\infty},\varphi).
\]
\end{proposition}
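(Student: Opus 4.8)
The plan is to bound $\overline{P}_{\mu}(f_{1,\infty},K,\varphi)=\int_K \overline{P}_{\mu}(f_{1,\infty},x,\varphi)\,d\mu(x)$ from above by $P^P(f_{1,\infty},L,\varphi)$ for every Borel set $L$ with $\mu(L)=1$, and then take the infimum over such $L$ to get $P_{\mu}^P(f_{1,\infty},\varphi)$. The natural device is a level-set decomposition of $K$ according to the value of the upper measure-theoretic local pressure, combined with part (2) of the Billingsley type theorem (\Cref{Billingsley}), which says precisely that if $\overline{P}_{\mu}(f_{1,\infty},x,\varphi)\ge t$ on a set of positive measure, then the packing pressure of that set is at least $t$.

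First I would fix an arbitrary Borel set $L\subseteq X$ with $\mu(L)=1$, and it suffices to show $\overline{P}_{\mu}(f_{1,\infty},K,\varphi)\le P^P(f_{1,\infty},L,\varphi)$. Write $c=P^P(f_{1,\infty},L,\varphi)$. For each $n\ge 1$ set
\[
K_n=\{x\in K\cap L:\ \overline{P}_{\mu}(f_{1,\infty},x,\varphi)>c+1/n\}.
\]
I claim $\mu(K_n)=0$ for every $n$: otherwise, since $\mu(K_n)>0$ and $\overline{P}_{\mu}(f_{1,\infty},x,\varphi)\ge c+1/n$ for all $x\in K_n\subseteq L$, part (2) of \Cref{Billingsley} applied to $K_n$ gives $P^P(f_{1,\infty},K_n,\varphi)\ge c+1/n$; but $K_n\subseteq L$, so by monotonicity (\Cref{propertiesp}(1)) $P^P(f_{1,\infty},K_n,\varphi)\le P^P(f_{1,\infty},L,\varphi)=c$, a contradiction. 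Hence $\mu\big(\bigcup_n K_n\big)=0$, which means $\overline{P}_{\mu}(f_{1,\infty},x,\varphi)\le c$ for $\mu$-a.e.\ $x\in K\cap L$, and since $\mu(K\cap L)=\mu(K)=1$ this holds $\mu$-a.e.\ on $K$. Integrating over $K$ yields
\[
\overline{P}_{\mu}(f_{1,\infty},K,\varphi)=\int_K \overline{P}_{\mu}(f_{1,\infty},x,\varphi)\,d\mu(x)\le c=P^P(f_{1,\infty},L,\varphi).
\]
Taking the infimum over all Borel $L$ with $\mu(L)=1$ and invoking the equivalent formula $P_{\mu}^P(f_{1,\infty},\varphi)=\inf\{P^P(f_{1,\infty},L,\varphi):\mu(L)=1\}$ established just before \Cref{vitali} gives the desired inequality.

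The main obstacle, and the point to be careful about, is integrability and the handling of the set where $\overline{P}_{\mu}(f_{1,\infty},x,\varphi)=+\infty$ or where the integrand is very negative: one should first note $\overline{P}_{\mu}(f_{1,\infty},x,\varphi)\ge -\|\varphi\|_{\infty}$ pointwise (since $-\log\mu(B_n(x,\varepsilon))\ge 0$ and $S_n\varphi(x)\ge -n\|\varphi\|_{\infty}$), so the integral is well-defined with values in $(-\infty,+\infty]$, and the a.e.\ bound $\overline{P}_{\mu}(f_{1,\infty},x,\varphi)\le c$ in particular forces the integral to be finite and at most $c$. A secondary subtlety is that \Cref{Billingsley} is stated for subsets $K$ of $X$ with the hypothesis holding for \emph{all} $x$ in that subset; applying it to $K_n$ is legitimate because on $K_n$ the inequality $\overline{P}_{\mu}(f_{1,\infty},x,\varphi)\ge c+1/n$ holds by construction for every point, and $\mu(K_n)>0$ is exactly the extra hypothesis of part (2). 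Everything else is bookkeeping with monotonicity and countable additivity.
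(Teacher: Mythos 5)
Your proposal is correct and runs on the same engine as the paper's proof: part (2) of \Cref{Billingsley} applied to a positive-measure super-level set of $x\mapsto\overline{P}_{\mu}(f_{1,\infty},x,\varphi)$, combined with the monotonicity of $P^P$ from \Cref{propertiesp}. The only organizational difference is that the paper argues contrapositively (fixing $s<\overline{P}_{\mu}(f_{1,\infty},K,\varphi)$, extracting one positive-measure set $A$ where the local pressure exceeds $s$, and handling sets $Z$ with $\mu(Z)\geq 1-\delta$ directly from \Cref{pressureofmu} via a cover $\{Z_i\}$), whereas you test against each full-measure set $L$ via the equivalent formula $P_{\mu}^P(f_{1,\infty},\varphi)=\inf\{P^P(f_{1,\infty},L,\varphi):\mu(L)=1\}$ and a level-set decomposition, which is a sound and in fact slightly cleaner route to the same conclusion.
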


\begin{proof}
For any $s< \overline{P}_{\mu}(f_{1,\infty},\varphi)=\int_X \overline{P}_{\mu}(f_{1,\infty},x,\varphi)\, d\mu(x)$, there exist $\varepsilon>0$, $\epsilon>0$, and a Borel set $A \subseteq X$ with $\mu(A)>0$ such that
\begin{equation}\label{splusepsilon}
  \limsup_{n\to\infty}\frac{-\log \mu(B_n(x,\varepsilon))+S_n\varphi(x)}{n}>s+\epsilon\ \text{for all}\ x\in A.
\end{equation}
Let $\delta\in (0,\mu(A))$, and let $Z \subseteq X$ be any set satisfying $\mu(Z)\geq 1-\delta$. 
Choose a countable collection $\{Z_i\}$ covering $Z$. 
Then 
\[
  \mu(\cup_i Z_i)\geq \mu(Z) \geq 1-\delta,
\] 
and hence 
\[
  \mu(A\cap (\cup_i Z_i))\geq \mu(A)-\delta>0.
\]
Therefore, there exists an index $i$ such that $\mu(A\cap Z_i)>0$. 
It is clear that \eqref{splusepsilon} also holds for all $x\in A\cap Z_i$.
Applying the second assertion of \Cref{Billingsley}, we obtain 
\[
  P^P(f_{1,\infty},A\cap Z_i,\varphi) \geq s.
\]
Equivalently, 
\[
  \mathcal{M}^{\widetilde{P}}(f_{1,\infty},A\cap Z_i,\varphi,\varepsilon/5,s)=\infty.
\] 
It follows that
\[
  \mathcal{M}^{P}(f_{1,\infty},Z_i,\varphi,\varepsilon/5,s)\geq \mathcal{M}^{P}(f_{1,\infty},A\cap Z_i,\varphi,\varepsilon/5,s)>0.
\]
Consequently, $\mathcal{M}^{\widetilde{P}}(f_{1,\infty},Z,\varphi,\varepsilon/5,s)=\infty$,
which implies that $P^P(f_{1,\infty},Z,\varphi,\varepsilon/5)>s$.
Since $\mu(Z)\geq 1-\delta$, we conclude that $P^P(f_{1,\infty},\varphi)\geq s$.
By \Cref{pressureofmu}, this yields $P_{\mu}^P(f_{1,\infty},\varphi) \geq s$.
As $s< \overline{P}_{\mu}(f_{1,\infty},\varphi)$ is arbitrary, we finally obtain $P_{\mu}^P(f_{1,\infty},\varphi)\geq \overline{P}_{\mu}(f_{1,\infty},\varphi)$.
\end{proof}

\begin{remark}
By the Billingsley type theorem for Pesin topological pressure \cite[Theorem A]{Naz2024}, if there exists a Borel probability measure $\mu$ such that $\mu(K)>0$, and for all $x\in K$, $\underline{P}_{\mu}(f_{1,\infty},x,\varphi) \geq s$, it follows that $P^B(f_{1,\infty},K,\varphi)\geq s$. By applying a similar argument, we can also establish the analogous inequality $\underline{P}_{\mu}(f_{1,\infty},\varphi)\leq P_{\mu}^B(f_{1,\infty},\varphi)$.
\end{remark}

\begin{lemma}\label{lemma1}
Let $K \subseteq X$, $\varepsilon>0$, and $s>\|\varphi\|_{\infty}$. 
If $\mathcal{M}^P(f_{1,\infty},K,\varphi,\varepsilon,s)=\infty$, then for any $N\in \mathbb{N}$ and any finite interval $(a,b)$ with $a \geq 0$, there exists a finite pairwise disjoint collection of closed balls $\{\overline{B}_{n_i}(x_i,\varepsilon)\}_i$ such that $x_i\in K$, $n_i \geq N$, and $\sum_{i} e^{-sn_i+S_{n_i}\varphi(x_i)}\in (a,b)$.
\end{lemma}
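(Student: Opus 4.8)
The plan is to exploit the standing hypothesis $s>\|\varphi\|_{\infty}$ to make every individual summand $e^{-sn_i+S_{n_i}\varphi(x_i)}$ uniformly small once $n_i$ is large, and then to hit the prescribed value in $(a,b)$ by a discrete intermediate value argument: I would build a finite pairwise disjoint family one ball at a time, starting from the empty family (whose weighted sum is $0\le a$) and ending at a family whose weighted sum exceeds $b$, arranging that each single step changes the running sum by strictly less than $b-a$. Some intermediate family must then have its weighted sum in $(a,b)$.

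First I would reduce matters to \emph{finite} families whose Bowen times are allowed to be arbitrarily large. Enlarging the threshold shrinks the collection of admissible disjoint families, so $\mathcal{M}^P(f_{1,\infty},K,\varphi,\varepsilon,s,N')$ is non-increasing in $N'$; since its limit is $\mathcal{M}^P(f_{1,\infty},K,\varphi,\varepsilon,s)=+\infty$, we get $\mathcal{M}^P(f_{1,\infty},K,\varphi,\varepsilon,s,N')=+\infty$ for every $N'\in\mathbb{N}$. Moreover, the supremum in \eqref{pressuremp} over countable pairwise disjoint families coincides with the supremum over finite ones, because the finite sub-families of an admissible countable family (which are still admissible, being disjoint with $x_i\in K$ and $n_i\ge N'$) have weighted sums that increase to, and hence realize in the limit or diverge to, the total. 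Consequently, for each $N'$ there is a finite pairwise disjoint family $\{\overline{B}_{n_i}(x_i,\varepsilon)\}_{i=1}^{M}$ with $x_i\in K$, $n_i\ge N'$ and $\sum_{i=1}^{M}e^{-sn_i+S_{n_i}\varphi(x_i)}>b$.

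Now fix $N$ and $(a,b)$ and put $\eta=b-a>0$. Since $S_{n}\varphi(x)\le n\|\varphi\|_{\infty}$ for all $x\in X$ and $n\in\mathbb{N}$, each weight satisfies $e^{-sn_i+S_{n_i}\varphi(x_i)}\le e^{-n_i(s-\|\varphi\|_{\infty})}$, and because $s-\|\varphi\|_{\infty}>0$ this bound tends to $0$ as $n_i\to\infty$; I would choose $N^{*}\ge N$ with $e^{-N^{*}(s-\|\varphi\|_{\infty})}<\eta$. Applying the reduction of the previous paragraph with threshold $N^{*}$ produces a finite disjoint family $\{\overline{B}_{n_1}(x_1,\varepsilon),\dots,\overline{B}_{n_M}(x_M,\varepsilon)\}$ with $x_j\in K$, $n_j\ge N^{*}$ and $\sigma_M:=\sum_{j=1}^{M}e^{-sn_j+S_{n_j}\varphi(x_j)}>b$. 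Writing $\sigma_k=\sum_{j=1}^{k}e^{-sn_j+S_{n_j}\varphi(x_j)}$ for $0\le k\le M$, we have $\sigma_0=0\le a$, $\sigma_M>b$, and $\sigma_k-\sigma_{k-1}=e^{-sn_k+S_{n_k}\varphi(x_k)}<\eta$ for every $k$. Let $k^{*}$ be the least index with $\sigma_{k^{*}}>a$; it exists since $\sigma_M>b>a$, and $k^{*}\ge 1$ since $\sigma_0\le a$. Then $\sigma_{k^{*}-1}\le a$, so $\sigma_{k^{*}}=\sigma_{k^{*}-1}+(\sigma_{k^{*}}-\sigma_{k^{*}-1})<a+\eta=b$, whence $\sigma_{k^{*}}\in(a,b)$, and $\{\overline{B}_{n_1}(x_1,\varepsilon),\dots,\overline{B}_{n_{k^{*}}}(x_{k^{*}},\varepsilon)\}$ is the required collection (finite, disjoint, with all $n_j\ge N^{*}\ge N$ and all $x_j\in K$). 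The argument is elementary; the only steps requiring care are the passage from countable to finite disjoint families and the quantitative choice of $N^{*}$ forcing the increments below $b-a$, which is precisely where $s>\|\varphi\|_{\infty}$ is used. The failure of additivity of $S_n\varphi$ in the nonautonomous setting causes no trouble, since only the one-sided estimate $S_n\varphi\le n\|\varphi\|_{\infty}$ is needed; this is the nonautonomous analogue of the corresponding lemma in \cite{FH2012,ZC2023}.
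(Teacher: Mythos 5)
Your proof is correct and follows essentially the same route as the paper's: choose the threshold $N^{*}$ (the paper's $N_1$) so large that $s>\|\varphi\|_{\infty}$ forces every individual weight below $b-a$, extract a finite disjoint family with weighted sum exceeding $b$, and then use a discrete intermediate value argument (you add balls one at a time from the empty family; the paper discards them one at a time, which is the same argument run backwards). Your explicit justification of the countable-to-finite reduction and of the monotonicity of $\mathcal{M}^P(\cdot,N')$ in $N'$ fills in details the paper leaves implicit, but there is no substantive difference.
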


\begin{proof}
Choose $N_1>N$ sufficiently large so that $e^{-N_1(s-\|\varphi\|_{\infty})}<b-a$.
Since $\mathcal{M}^P(f_{1,\infty},K,\varphi,\varepsilon,s)=\infty$, it follows from the definition that $\mathcal{M}^P(f_{1,\infty},K,\varphi,\varepsilon,s,N_1)=+\infty$.
By the definition of the packing topological pressure, there exists a finite pairwise disjoint collection of closed balls $\{\overline{B}_{n_i}(x_i,\varepsilon)\}_i$ such that $x_i\in K$, $n_i \geq N_1$ for all $i$, and $\sum_{i}e^{-sn_i+S_{n_i}\varphi(x_i)}>b$. 
Noting that $S_{n_i}\varphi(x_i)\leq n_i \|\varphi\|_{\infty}$, we obtain
\[
e^{-sn_i+S_{n_i}\varphi(x_i)}\leq e^{-n_i(s-\|\varphi\|_{\infty})}\leq e^{-N_1(s-\|\varphi\|_{\infty})} <b-a.
\]
Therefore, by discarding balls from the collection $\{\overline{B}_{n_i}(x_i,\varepsilon)\}_i$ one at a time if necessary, we may arrange that $a<\sum_ie^{-sn_i+S_{n_i}\varphi(x_i)}<b$.
\end{proof}

A \emph{Polish space} is a separable topological space that is equipped with a complete metric.
Given a Polish space $X$, a subset $Z \subseteq X$ is said to be \emph{analytic} if there exists a Polish space $Y$ and a continuous map $f: Y\to X$ such that $Z=f(Y)$.
It is worth noting that every compact metric space is Polish, and that any nonempty Borel set is a continuous image of the set of natural numbers $\mathbb{N}$. Consequently, every Borel set is analytic. With these preliminaries in place, we are ready to prove the reverse inequalities in \Cref{main} using the tools from analytic set theory.

\begin{proposition}\label{upperbound}
Let $(X,f_{1,\infty})$ be an NDS and $\varphi\in C(X, \mathbb{R})$. 
If $Z$ is an analytic subset of $X$ with $P^P(f_{1,\infty},Z,\varphi)\geq \|\varphi\|_{\infty}:=\sup_{x\in X}|\varphi(x)|$, then for any $s\in (\|\varphi\|_{\infty},P^P(f_{1,\infty},Z,\varphi))$, there exist a compact subset $K \subseteq Z$ and a measure $\mu\in \mathcal{M}(K)$ such that $\overline{P}_{\mu}(f_{1,\infty},K,\varphi) \geq s$ and $P_{\mu}^P(f_{1,\infty},\varphi)\geq s$.
\end{proposition}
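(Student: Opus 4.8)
The plan is to witness $P^P(f_{1,\infty},Z,\varphi)>s$ by a Moran-type construction: I will produce a compact set $K\subseteq Z$ and a Borel probability measure $\mu$ with $\mu(K)=1$ such that $\overline{P}_{\mu}(f_{1,\infty},x,\varphi)\geq s$ for every $x$ in $K$. Integrating gives $\overline{P}_{\mu}(f_{1,\infty},K,\varphi)=\int_K\overline{P}_{\mu}(f_{1,\infty},x,\varphi)\,d\mu\geq s$, and then \Cref{pmupp} (applicable since $\mu(K)=1$) upgrades this to $P_{\mu}^P(f_{1,\infty},\varphi)\geq \overline{P}_{\mu}(f_{1,\infty},K,\varphi)\geq s$, which is exactly the conclusion. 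The engine of the construction is \Cref{lemma1}, which lets me at each generation pick a finite pairwise disjoint family of closed Bowen balls with centers in a prescribed set and with total weight $\sum_i\exp(-s'n_i+S_{n_i}\varphi(x_i))$ landing in any prescribed window $(a,b)$; the only hypothesis needed to invoke it on a set $W$ is $\mathcal{M}^P(f_{1,\infty},W,\varphi,\varepsilon,s')=+\infty$ together with $s'>\|\varphi\|_{\infty}$.

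First I fix parameters. Write $\omega(\varepsilon)=\sup\{|\varphi(u)-\varphi(v)|:d(u,v)\leq 2\varepsilon\}$, so $\omega(\varepsilon)\to0$ by uniform continuity of $\varphi$. Since $P^P(f_{1,\infty},Z,\varphi,\varepsilon)\to P^P(f_{1,\infty},Z,\varphi)>s$ and $\omega(\varepsilon)\to0$, I may choose $\varepsilon>0$ small with $P^P(f_{1,\infty},Z,\varphi,\varepsilon)>s+2\omega(\varepsilon)$, and then $t\in(s+2\omega(\varepsilon),\,P^P(f_{1,\infty},Z,\varphi,\varepsilon))$ and $s'\in(s+\omega(\varepsilon),\,t)$; note $s'>s>\|\varphi\|_{\infty}$ and $\mathcal{M}^{\widetilde{P}}(f_{1,\infty},Z,\varphi,\varepsilon,t)=+\infty$. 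Since $Z$ is analytic, a capacitability argument applied to the packing-type outer measure $A\mapsto\mathcal{M}^{\widetilde{P}}(f_{1,\infty},A,\varphi,\varepsilon,t)$ (this is the only place analyticity of $Z$ is used; cf.\ the corresponding step in \cite{FH2016}) yields a compact $Z_1\subseteq Z$ with $\mathcal{M}^{\widetilde{P}}(f_{1,\infty},Z_1,\varphi,\varepsilon,t)>0$. Put $K_1=\{x\in Z_1:\mathcal{M}^{\widetilde{P}}(f_{1,\infty},Z_1\cap U,\varphi,\varepsilon,t)>0\text{ for every open }U\ni x\}$; its complement in $Z_1$ is covered by countably many open sets on which $\mathcal{M}^{\widetilde{P}}(\cdot,\varphi,\varepsilon,t)$ vanishes, so by countable subadditivity $\mathcal{M}^{\widetilde{P}}(f_{1,\infty},K_1,\varphi,\varepsilon,t)>0$, and $K_1$ is closed in $Z_1$, hence compact. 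For every open $U$ meeting $K_1$ one has $\mathcal{M}^{\widetilde{P}}(f_{1,\infty},K_1\cap U,\varphi,\varepsilon,t)>0$, so the critical parameter of $K_1\cap U$ is $\geq t>s'$; therefore $\mathcal{M}^{\widetilde{P}}(f_{1,\infty},K_1\cap U,\varphi,\varepsilon,s')=+\infty$, a fortiori $\mathcal{M}^P(f_{1,\infty},K_1\cap U,\varphi,\varepsilon,s')=+\infty$, and \Cref{lemma1} is available on every such piece.

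Now run the construction inside $K_1$. Inductively build a rooted tree of finite pairwise disjoint families $\mathcal{F}_1,\mathcal{F}_2,\dots$ of closed Bowen balls with centers in $K_1$: each ball of $\mathcal{F}_{k+1}$ sits inside a ball of $\mathcal{F}_k$, the Bowen times strictly and rapidly increase with the generation, and inside each parent ball $B$ (with center $x_B$, time $m_B$, budget $w(B)=\exp(-s'm_B+S_{m_B}\varphi(x_B))$) one applies \Cref{lemma1} to $K_1\cap B$ with a huge lower bound on times and with window $(w(B)(1-2^{-k}),w(B))$, obtaining children whose weights sum into that window; renormalizing makes the children weights sum to the parent weight $p_B$. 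These weights are consistent between generations and define a Borel probability measure $\mu$ concentrated on the compact Moran set $K:=\bigcap_k\bigcup_{B\in\mathcal{F}_k}B\subseteq K_1\subseteq Z$, so $\mu\in\mathcal{M}(K)$. Telescoping the window choices, the mass of the generation-$k$ ball $\overline{B}_{n_k}(x_{\alpha_k},\varepsilon)$ containing a point $x\in K$ satisfies $\mu(\overline{B}_{n_k}(x_{\alpha_k},\varepsilon))\leq C_k\exp(-s'n_k+S_{n_k}\varphi(x_{\alpha_k}))$ with $C_k\leq 2\prod_{j\geq2}(1-2^{-j})^{-1}$ bounded, hence $\tfrac1{n_k}\log C_k\to0$.

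The delicate point — and where the scheme of \cite{FH2016,ZC2023} must be followed carefully — is to pass from this estimate for generation-$k$ balls to an estimate for a genuine Bowen ball $B_{n_k}(x,\varepsilon/5)$ around $x\in K$. This needs careful bookkeeping of the Bowen radii across generations together with a separation property of the same-generation balls, arranged so that $B_{n_k}(x,\varepsilon/5)$ meets only a universally bounded number $D$ of generation-$k$ balls (this is the analogue, inside the construction, of the Vitali $5r$-step used in the proof of \Cref{Billingsley}(2)). Granting it, and using that any center met lies within $2\varepsilon$ of $x$ in $d_{n_k}$ so that $|S_{n_k}\varphi(x)-S_{n_k}\varphi(\cdot)|\leq n_k\omega(\varepsilon)$, one gets $\mu(B_{n_k}(x,\varepsilon/5))\leq D\,C_k\exp(-s'n_k+S_{n_k}\varphi(x)+n_k\omega(\varepsilon))$; dividing $-\log\mu(B_{n_k}(x,\varepsilon/5))+S_{n_k}\varphi(x)$ by $n_k$ and letting $k\to\infty$ along this sequence of times yields $\overline{P}_{\mu}(f_{1,\infty},x,\varphi)\geq s'-\omega(\varepsilon)\geq s$ for every $x\in K$. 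Integrating and applying \Cref{pmupp} as in the first paragraph finishes the proof. The main obstacle is precisely this geometric bookkeeping inside the Moran construction; the capacitability reduction and the choices of $\varepsilon,t,s'$ above are comparatively routine once the combinatorial core is in place.
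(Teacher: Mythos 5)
Your overall architecture matches the paper's: the engine is \Cref{lemma1}, the ``remove all open sets on which the premeasure vanishes'' step is the paper's set $Z'$ and its \Cref{claim}, the nested families with weights forced into shrinking multiplicative windows are the paper's $K_i,\mu_i,m_i(\cdot),\gamma_i$, the limit measure and the Bowen-ball mass estimate $\overline{\mu}(B_{m_i(x)}(z,\varepsilon))\leq C e^{-m_i(x)s+S_{m_i(x)}\varphi(x)}$ are the paper's \eqref{appendix7}--\eqref{appendix8}, and the conclusion via \Cref{pmupp} is how \Cref{main} is assembled. However, there is one step where you diverge from the paper and where I see a genuine gap: you dispose of analyticity by asserting that ``a capacitability argument applied to the packing-type outer measure'' produces a \emph{compact} $Z_1\subseteq Z$ with $\mathcal{M}^{\widetilde{P}}(f_{1,\infty},Z_1,\varphi,\varepsilon,t)>0$, and you call this comparatively routine. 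It is not. The set function $A\mapsto \mathcal{M}^{\widetilde{P}}(f_{1,\infty},A,\varphi,\varepsilon,t)$ is a metric outer measure, but it is not obviously a Choquet capacity (continuity from above along decreasing sequences of compacta fails for packing-type constructions in general), so the Choquet capacitability theorem does not apply off the shelf; inner regularity of packing measures on analytic sets is a hard theorem even in the classical fractal setting (Joyce--Preiss), and nothing of the sort is available for this dynamical premeasure. The paper avoids this entirely: it never produces a compact subset of positive premeasure up front. Instead it uses the analytic parametrization of $Z$ implicitly through the sets $Z_{n_1n_2\dots n_p}$ (images of cylinders under a continuous surjection from $\mathbb{N}^{\mathbb{N}}$), threads the choice of $n_p$ through the induction so that $K_p\subseteq Z_{n_1\dots n_p}$, and only at the very end concludes that $K=\bigcap_n\overline{\bigcup_{i\geq n}K_i}\subseteq\bigcap_p\overline{Z_{n_1\dots n_p}}=\bigcap_p Z_{n_1\dots n_p}\subseteq Z$ is compact, via the Cantor diagonal argument. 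You need either to supply a proof of your inner-regularity claim or to adopt this cylinder-set mechanism.

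A second, smaller point: the ``geometric bookkeeping'' you explicitly defer (bounding the number of generation-$k$ balls met by $B_{n_k}(x,\varepsilon/5)$) is precisely the content of the paper's inductive choice of $\gamma_{i+1}\in(0,\gamma_i/4)$ enforcing \eqref{appendix1}, \eqref{appendix3}, \eqref{appendix4} and hence \eqref{appendix5}: for $z\in\overline{B}(x,\gamma_i)$ the Bowen ball $\overline{B}_{m_i(x)}(z,\varepsilon)$ is disjoint from $\overline{B}(y,\gamma_i)$ for every $y\in K_i\setminus\{x\}$, so the multiplicity constant is $1$ and no Vitali-type $5r$ argument is needed inside the construction. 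Since you acknowledge this step rather than carry it out, the proposal leaves both of the proof's two nontrivial ingredients unproven; the second is fillable exactly as in the paper, but the first requires a change of method. (Your explicit modulus-of-continuity correction $\omega(\varepsilon)$ when replacing $S_{n}\varphi$ at the center by $S_n\varphi$ at the point $x$ is, for what it is worth, more careful than the paper's passage from \eqref{appendix8} to \eqref{normalized}.)
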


\begin{proof}
We adapt the proof of \cite{FH2012} and take advantage of ideas from \cite{ZZ2023,ZC2023}.
Our goal is to inductively construct a sequence $\{K_i\}_i$ of finite subsets of $Z$ and a sequence of finite measures $\{\mu_i\}_i$ such that $\mu_i$ is supported on $K_i$ for each $i$.
Additionally, we introduce a sequence of integers $\{n_i\}_i \subseteq \mathbb{N}$, a sequence of positive numbers $\{\gamma_i\}_i$, and a sequence of integer-valued functions $m_i: K_i\to \mathbb{N}$. 
The desired measure $\mu$ and subset $K \subseteq Z$ will be defined in terms of the sequences $\{\mu_i\}_i$ and $\{K_i\}_i$. 
To achieve this, we proceed in the following three steps:
\begin{enumerate}
    \item[(1)] Step 1: Construct the fundamental framework of $K_1$, $\mu_1$, $m_1(\cdot )$, $n_1$, and $\gamma_1$;
    \item[(2)] Step 2: Construct $K_2$, $\mu_2$, $m_2(\cdot)$, $n_2$, and $\gamma_2$;
    \item[(3)] Step 3: Assuming that $K_i$, $\mu_i$, $m_i(\cdot)$, $n_i$, and $\gamma_i$ have been constructed for $i=1,2,\dots, p$, construct $K_{p+1}$, $\mu_{p+1}$, $m_{p+1}(\cdot)$, $n_{p+1}$, and $\gamma_{p+1}$.
\end{enumerate}

\medskip

\textbf{Step 1}. Construct $K_1$, $\mu_1$, $m_1(\cdot)$, $n_1$, and $\gamma_1$.

For any $s\in (\|\varphi\|_{\infty},P^P(f_{1,\infty},Z,\varphi))$, choose $\varepsilon>0$ sufficiently small that $s<P^P(f_{1,\infty},Z,\varphi,\varepsilon)$, and then fix $t\in (s,P^P(f_{1,\infty},Z,\varphi,\varepsilon))$. 
Note that $t<P^P(f_{1,\infty},Z,\varphi,\varepsilon)$, so $\mathcal{M}^P(f_{1,\infty},Z,\varphi,\varepsilon,t)=+\infty$. 
Let
\[
H=\bigcup\{G \subseteq X: G \text{ is an open set and } \mathcal{M}^P(f_{1,\infty},G \cap Z, \varphi,\varepsilon, t)=0\}.
\]
By the separability of $X$, $H=\bigcup_{i} G_i$ and $Z \cap H = \bigcup_{i} (Z \cap G_i)$. 
Hence
\[
\mathcal{M}^P(f_{1,\infty},Z \cap H,\varphi,\varepsilon,t)\leq \sum_{i}\mathcal{M}^P(f_{1,\infty},Z\cap G_i,\varphi,\varepsilon,t)=0.
\]
Therefore, we have
\begin{equation}\label{KcapH}
\mathcal{M}^P(f_{1,\infty},Z \cap H,\varphi,\varepsilon,t)=0.
\end{equation}

Let $Z'=Z \backslash H=Z \cap (X \backslash H)$. 
We then have the following \Cref{claim}, which will be used frequently.

\begin{claim}\label{claim}
For any open set $G \subseteq X$, either $Z' \cap G= \emptyset$ or $\mathcal{M}^P(f_{1,\infty},Z'\cap G,\varphi,\varepsilon,t)>0$.
\end{claim}

\textit{Proof of \Cref{claim}}. 
For any open set $G \subseteq X$, suppose $\mathcal{M}^P(f_{1,\infty},Z'\cap G,\varphi,\varepsilon, t)=0$. 
Since $Z=Z' \cup (Z \cap H )$, we have $Z \cap G \subseteq (Z' \cap G) \cup (Z \cap H)$. 
By \Cref{propertiesp} and \eqref{KcapH}, we then obtain
\begin{align*}
    \mathcal{M}^P(f_{1,\infty},Z \cap G,\varphi,\varepsilon,t)
  &\leq \mathcal{M}^P(f_{1,\infty},Z'\cap G,\varphi,\varepsilon,t)+\mathcal{M}^P(f_{1,\infty},Z \cap H,\varphi,\varepsilon,t)=0.
\end{align*}
This implies that $G \subseteq H$ by the definition of $H$, and hence $Z' \cap G=\emptyset$. 
This completes the proof of the claim.

\bigskip

Applying \Cref{propertiesp} again to $Z=Z'\cup(Z\cap H)$, and using \eqref{KcapH}, we obtain
\begin{align*}
\mathcal{M}^P(f_{1,\infty},Z,\varphi,\varepsilon,t) & \leq \mathcal{M}^P(f_{1,\infty},Z',\varphi,\varepsilon,t)+\mathcal{M}^P(f_{1,\infty},Z \cap H,\varphi,\varepsilon,t)\\
&\leq \mathcal{M}^P(f_{1,\infty},Z',\varphi,\varepsilon,t).
\end{align*}
Thus, $\mathcal{M}^P(f_{1,\infty},Z',\varphi,\varepsilon,t)=+\infty$. 
Since $s<t$, it follows that $\mathcal{M}^P(f_{1,\infty},Z',\varphi,\varepsilon,s)=+\infty$.

Next, we apply \Cref{lemma1} to find a finite set $K_1 \subseteq Z'$ and an integer-valued function $m_1(\cdot)$ defined on $K_1$ such that $\{\overline{B}_{m_1(x)}(x,\varepsilon)\}_{x\in K_1}$ is pairwise disjoint, and $\sum_{x\in K_1}e^{-m_1(x)s+S_{m_1(x)}\varphi(x)}\in (1,2)$.

We then define
\[
  \mu_1=\sum_{x\in K_1}e^{-m_1(x)s+S_{m_1(x)}\varphi(x)}\delta_x,
\]
where $\delta_x$ is the Dirac measure at $x$. 
We choose $\gamma_1>0$ small enough that for any function $z: K_1\to X$ satisfying $d(z(x),x)<\gamma_1$ for all $x\in K_1$, and for distinct $x,y\in K_1$,
\begin{equation}\label{appendix1}
\left( \overline{B}(z(x),\gamma_1) \cup \overline{B}_{m_1(x)}(z(x),\varepsilon)\right) \cap \left(\overline{B}(z(y),\gamma_1)\cup \overline{B}_{m_1(y)}(z(y),\varepsilon) \right)=\emptyset.
\end{equation}
From this, we also see that the elements of $\{\overline{B}(x,\gamma_1)\}_{x\in K_1}$ are pairwise disjoint. 
By contradiction and the triangle inequality, we also have
\[
 \overline{B}_{m_1(x)}(z(x),\varepsilon) \cap \overline{B}(y,\gamma_1) = \emptyset,\  \text{for all } y\in K_1\backslash \{x\}.
\]
Moreover, for any $x\in K_1$, we have $B(x,\gamma_1/4) \cap K_1 \neq \emptyset$. 
Since $K_1 \subseteq Z' \subseteq Z$ and $x\in Z' \cap B(x,\gamma_1/4)$, by \Cref{claim},
\[
  \mathcal{M}^P(f_{1,\infty},Z'\cap B(x,\gamma_1/4),\varphi,\varepsilon,t)>0,
\]
and thus
\[
  \mathcal{M}^P(f_{1,\infty},Z \cap B(x,\gamma_1/4),\varphi,\varepsilon,t) \geq \mathcal{M}^P(f_{1,\infty},Z' \cap B(x,\gamma_1/4),\varphi,\varepsilon,t)>0.
\]
Therefore, we can choose $n_1 \in \mathbb{N}$ sufficiently large such that $Z \supseteq Z_{n_1} \supseteq K_1$ and
\begin{equation}\label{appendix2}
\mathcal{M}^P(f_{1,\infty},Z_{n_1} \cap B(x,\gamma_1/4),\varphi,\varepsilon,t)>0, \ \text{for all } x\in K_1.
\end{equation}

\medskip

\textbf{Step 2}. Construct $K_2$, $\mu_2$, $m_2(\cdot)$, $n_2$, and $\gamma_2$.

Based on \eqref{appendix2}, and following an approach similar to that in step 1. Fix $x\in K_1$, we construct a finite set
\[
  E_2(x) \subseteq Z_{n_1}\cap B(x,\gamma_1/4)
\]
and an integer-valued function
\[
  m_2: E_2(x)\to \mathbb{N}\cap [\max \{m_1(y):y\in K_1\},+\infty)
\]
such that the following conditions hold:
\begin{enumerate}
  \item[(2a)] $\mathcal{M}^P(f_{1,\infty},Z_{n_1}\cap G,\varphi,\varepsilon,t)>0$ for any open set $G$ with $G \cap E_2(x) \neq \emptyset$;
  \item[(2b)] The elements of $\{\overline{B}_{m_2(y)}(y,\varepsilon)\}_{y\in E_2(x)}$ are pairwise disjoint, and
  \[
  \mu_1(\{x\})=e^{-m_1(x)s+S_{m_1(x)}\varphi(x)}<\sum_{y\in E_2(x)}e^{-m_2(y)s+S_{m_2(y)}\varphi(y)}<(1+2^{-2})\mu_1(\{x\}).
  \]
\end{enumerate}
To verify these, for $x\in K_1$ and define $F=Z_{n_1} \cap B(x,\gamma_1/4)$. 
Let
\[
  H_x =\bigcup \{G \subseteq X: G \text{ is an open set and } \mathcal{M}^P(f_{1,\infty},F\cap G,\varphi,\varepsilon,t)=0 \},
\]
and
\[
  F'=F\backslash H_x=F \cap (X\backslash H_x).
\]

As in Step 1, we can verify that $\mathcal{M}^P(f_{1,\infty},F',\varphi,\varepsilon,t)=\mathcal{M}^P(f_{1,\infty},F,\varphi,\varepsilon,t)>0$, and $\mathcal{M}^P(f_{1,\infty},F'\cap G,\varphi,\varepsilon,t)>0$ for any open set $G$ with $F' \cap G \neq \emptyset$.

Since $s<t$, the first equality implies that $\mathcal{M}^P(f_{1,\infty},F',\varphi,\varepsilon,s)=+\infty$. 
Applying \Cref{lemma1} again, we can find a finite set $E_2(x) \subseteq F'$, and an integer-valued function $m_2: E_2(x)\to \mathbb{N}\cap [\max\{m_1(y): y\in K_1\}, +\infty)$ such that condition (2b) holds. 
Observe that if $G$ is an open set with $G\cap E_2(x)\neq \emptyset$, then by the choice of $E_2(x)\subseteq F'$, we have $G \cap F' \neq \emptyset$. 
Furthermore, $\emptyset \neq G \cap F' \subseteq G \cap F \subseteq G \cap Z_{n_1}$, and by \Cref{claim}, we have
\[
\mathcal{M}^P(f_{1,\infty},G\cap Z_{n_1},\varphi,\varepsilon,t)\geq \mathcal{M}^P(f_{1,\infty},G\cap F',\varphi,\varepsilon,t)>0.
\]
Thus condition (2a) holds as well.
\medskip

Since $E_2(x) \subseteq Z_{n_1} \cap B(x,\gamma_1/4)$ for any $x\in K_1$, and the elements of $\{\overline{B}(x,\gamma_1/4)\}_{x\in K_1}$ are pairwise disjoint, it follows that for any distinct $x,x'\in K_1$, we have $E_2(x)\cap E_2(x')=\emptyset$. 
Now define
\[
  K_2=\bigcup_{x\in K_1}E_2(x), \quad \mu_2=\sum_{y\in K_2}e^{-m_2(y)s+S_{m_2(y)}\varphi(y)} \delta_y.
\]
It is clear that for any $x\in K_1$, $E_2(x)=K_2 \cap B(x,\gamma_1)$, and the elements of $\{\overline{B}_{m_2(x)}(x,\varepsilon)\}_{x\in K_2}$ are pairwise disjoint, as stated in \eqref{appendix1} and condition (2b). 
Thus, we can choose $\gamma_2\in (0,\gamma_1/4)$ small enough such that for any function $z: K_2\to X$ satisfying $d(z(x),x)<\gamma_2$ for all $x\in K_2$. We also have, for any distinct $x,y\in K_2$, 
\begin{equation}\label{appendix3}
\left( \overline{B}(z(x),\gamma_2)\cup \overline{B}_{m_2(x)}(z(x),\varepsilon) \right)\cap \left( \overline{B}(z(y),\gamma_2)\cup \overline{B}_{m_2(y)}(z(y),\varepsilon) \right)=\emptyset.
\end{equation}
From this, we see that the elements of $\{\overline{B}(x,\gamma_2)\}_{x\in K_2}$ are pairwise disjoint, and
\[
  \overline{B}_{m_2(x)}(z(x),\varepsilon)\cap \overline{B}(y,\gamma_2)= \emptyset, \ \text{for all } y\in K_2\backslash \{x\}.
\]
Finally, as in Step 1, we can find a large $n_2\in \mathbb{N}$ such that $Z \supseteq Z_{n_1,n_2}\supseteq K_2$ and
\[
  \mathcal{M}^P(f_{1,\infty},Z_{n_1,n_2}\cap B(x,\gamma_2/4),\varphi,\varepsilon,t)>0, \ \text{for all } x\in K_2.
\]

\medskip

\textbf{Step 3}. Assuming that $K_{i}$, $\mu_{i}$, $m_{i}(\cdot)$, $n_{i}$, and $\gamma_{i}$ have been constructed for $i=1,2,\dots,p$, construct $K_{p+1}$, $\mu_{p+1}$, $m_{p+1}(\cdot)$, $n_{p+1}$, and $\gamma_{p+1}$.

For any function $z: K_p\to X$ satisfying $d(z(x),x)<\gamma_p$, and any distinct $x,y \in K_p$, we have
\begin{equation}\label{appendix4}
\left ( \overline{B}(z(x),\gamma_p)\cup \overline{B}_{m_p(x)}(z(x),\varepsilon) \right) \cap \left (\overline{B}(z(y),\gamma_p)\cup \overline{B}_{m_p(y)}(z(y),\varepsilon) \right) =\emptyset,
\end{equation}
and 
\[
  \mathcal{M}^P(f_{1,\infty},Z_{n_1,n_2,\dots, n_p}\cap B(x,\gamma_p/4),\varphi,\varepsilon,t)>0 \quad \text{and}\quad Z \supseteq Z_{n_1,n_2,\dots, n_p} \supseteq K_p.
\]

Note that, by \eqref{appendix4}, the elements of $\{\overline{B}(x,\gamma_p)\}_{x\in K_p}$ are pairwise disjoint. 
For any $x\in K_p$, similar to Step 2, we can construct a finite set $E_{p+1}(x)\subseteq Z_{n_1,n_2,\dots, n_p} \cap B(x,\gamma_p/4)$ and an integer-valued map
\[
  m_{p+1}: E_{p+1}(x)\to \mathbb{N}\cap [\max\{m_p(y):y\in K_p \},+\infty)
\]
such that the following conditions hold:
\begin{enumerate}
  \item[(3a)] $\mathcal{M}^P(f_{1,\infty},Z_{n_1,n_2,\dots, n_p}\cap G,\varphi,\varepsilon,t)>0$ for any open set $G$ with $G \cap E_{p+1}(x)\neq \emptyset$;
  \item[(3b)] The elements of $\{\overline{B}_{m_{p+1}(y)}(y,\varepsilon)\}_{y\in E_{p+1}(x)}$ are disjoint and
  \[
  \mu_p(\{x\})<\sum_{y\in E_{p+1}(x)}e^{-m_{p+1}(y)s+S_{m_{p+1}(y)}\varphi(y)}<(1+2^{-(p+1)})\mu_p(\{x\}).
  \]
\end{enumerate}

Observe that for any $x\in K_p$, $E_{p+1}(x) \subseteq Z_{n_1,n_2,\dots, n_p} \cap B(x,\gamma_p/4)$, and the elements of $\{\overline{B}_{m_{p+1}(x)}(x,\gamma_p/4)\}_{x\in K_p}$ are also pairwise disjoint.
Therefore, for distinct $x,x'\in K_p$, we have $E_{p+1}(x)\cap E_{p+1}(x')=\emptyset$. 
We then define
\[
  K_{p+1}=\bigcup_{x\in K_p}E_{p+1}(x),\quad \mu_{p+1}=\sum_{y\in K_{p+1}}e^{-m_{p+1}(y)s+S_{m_{p+1}(y)}\varphi(y)}\delta_y.
\]
It is clear that $E_{p+1}(x)=K_{p+1} \cap B(x,\gamma_{p})$ for $x\in K_p$, and the elements of $\{\overline{B}_{m_{p+1}(x)}(x,\varepsilon)\}_{x\in K_{p+1}}$ are pairwise disjoint by \eqref{appendix4} and condition (3b). 
Thus, we can choose $\gamma_{p+1}\in (0,\gamma_p/4)$ small enough that for any function $z: K_{p+1}\to X$ satisfying $d(z(x),x)<\gamma_{p+1}$ for all $x\in K_{p+1}$, and for distinct $x,y\in K_{p+1}$, we have
\[
  \left(\overline{B}(z(x),\gamma_{p+1}) \cup \overline{B}_{m_{p+1}(x)}(z(x),\varepsilon) \right) \cap \left(\overline{B}(z(y),\gamma_{p+1})\cup \overline{B}_{m_{p+1}(y)}(z(y),\varepsilon) \right)=\emptyset.
\]
From this, the elements of $\{\overline{B}(x,\gamma_{p+1})\}_{x\in K_{p+1}}$ are pairwise disjoint, and
\[
  \overline{B}_{m_{p+1}(x)}(z(x),\varepsilon)\cap \overline{B}(y,\gamma_{p+1})=\emptyset, \ \text{for all } y\in K_{p+1}\backslash \{x\}.
\]
Finally, as in Step 2, we can find sufficiently large $n_{p+1}\in \mathbb{N}$ such that $Z\supseteq Z_{n_1,n_2,\dots, n_{p+1}}\supseteq K_{p+1}$, and
\[
  \mathcal{M}^P(f_{1,\infty},Z_{n_1,n_2,\dots, n_{p+1}}\cap B(x,\gamma_{p+1}/4),\varphi,\varepsilon,t)>0, \ \text{for all } x\in K_{p+1}.
\]

\medskip

By the previous three steps, we have completed the construction of $\{K_i\}_i$, $\{\mu_i\}_i$, $\{m_i(\cdot)\}_i$, $\{n_i\}_i$, and $\{\gamma_i\}_i$. Here are some basic properties:
\begin{enumerate}
  \item[(P1)] for each $i \geq 1$, the elements of $\mathcal{F}_i=\{\overline{B}(x,\gamma_i): x\in K_i\}$ are pairwise disjoint, and each element of $\mathcal{F}_{i+1}$ is a subset of $\overline{B}(x,\gamma_i/2)$ for some $x\in K_i$;
  \item[(P2)] for each $i \geq 1$, $K_i \subseteq Z_{n_1,n_2,\dots, n_i}$, and $\mu_i=\sum_{y\in K_i}e^{-m_i(y)s+S_{m_i(y)}\varphi(y)}\cdot \delta_y$ and $\mu_1(K_1)\in (1,2)$;
  \item[(P3)] for each $x\in K_i$, $E_{i+1}(x)=K_{i+1}\cap \overline{B}(x,\gamma_i)$, and for any $z\in \overline{B}(x,\gamma_i)$, we have
  \begin{equation}\label{appendix5}
  \overline{B}_{m_i(x)}(z,\varepsilon)\cap \bigcup_{y\in K_i\backslash \{x\}} \overline{B}(y,\gamma_i)=\emptyset,
  \end{equation}
  and
\begin{align}\label{appendix6}
\mu_i(\overline{B}(x,\gamma_i))&=e^{-m_i(x)s+S_{m_i(x)}\varphi(x)} \nonumber\\
  &\leq \sum_{y\in E_{i+1}(x)} e^{-m_{i+1}(y)s+S_{m_{i+1}(y)}\varphi(y)}\nonumber\\
  &\leq (1+2^{-(i+1)})\cdot\mu_i(\overline{B}(x,\gamma_i)).
  \end{align}
\end{enumerate}

\medskip

Next, we will construct a compact subset $K \subseteq Z$ and $\mu\in \mathcal{M}(K)$.

For any $x\in K_i$ with $\overline{B}(x,\gamma_i)\in \mathcal{F}_i$, then
\[
  \mu_i(\overline{B}(x,\gamma_i))=e^{-m_i(x)s+S_{m_i(x)}\varphi(x)},
\]
and
\begin{align*}
\mu_{i+1}(\overline{B}(x,\gamma_i))&=\sum_{F\in \mathcal{F}_{i+1}:\, F \subseteq \overline{B}(x,\gamma_i)}\mu_{i+1}(F)\\
&=\sum_{\overline{B}(x',\gamma_{i+1}):\,\, x'\in K_{i+1},\overline{B}(x',\gamma_{i+1})\subseteq \overline{B}(x,\gamma_i)}\mu_{i+1}(\overline{B}(x',\gamma_{i+1})),
\end{align*}
which simplifies to
\begin{align*}
\mu_{i+1}(\overline{B}(x,\gamma_i))&\leq \sum_{y\in K_{i+1}\cap \overline{B}(x,\gamma_i)}e^{-m_{i+1}(y)s+S_{m_{i+1}(y)}\varphi(y)}\\
&=\sum_{y\in E_{i+1}(x)}e^{-m_{i+1}(y)s+S_{m_{i+1}(y)}\varphi(y)}.
\end{align*}
By \eqref{appendix6}, it follows that
\[
  \mu_i(\overline{B}(x,\gamma_i))\leq \mu_{i+1}(\overline{B}(x,\gamma_i))\leq (1+2^{-i-1})\cdot\mu_i(\overline{B}(x,\gamma_i)).
\]
By induction, we see that for any $j \geq i$ and $\overline{B}(x,\gamma_i) \in \mathcal{F}_i$, we have
\begin{equation}\label{appendix7}
\mu_i(\overline{B}(x,\gamma_i))\leq \mu_j(\overline{B}(x,\gamma_i))\leq \prod_{n=i+1}^j (1+2^{-n})\mu_i(\overline{B}(x,\gamma_i))\leq C\cdot \mu_i(\overline{B}(x,\gamma_i)),
\end{equation}
where $C=\prod_{n=1}^{\infty}(1+2^{-n})<+\infty$.

Let $\overline{\mu}$ be the limit point of $\mu_i$ in weak$^{*}$-topology, and
\[
  K=\bigcap_{n=1}^{\infty}\overline{\bigcup_{i\geq n} K_i}.
\]
Then, $\overline{\mu}$ is supported on $K$, $K \subseteq \bigcap_{p=1}^{\infty}\overline{Z_{n_1,n_2,\dots, n_p}}\subseteq Z$, and
\begin{equation}\label{Zsubseteq}
K \subseteq \bigcup_{x\in K_i}B(x,\gamma_i/2).
\end{equation}

Following Cantor's diagonal method (see \cite[Proposition 3.5]{ZZ2023}), we can check that
\[
  \bigcap_{p=1}^{\infty}Z_{n_1,n_2,\dots, n_p}=\bigcap_{p=1}^{\infty} \overline{Z_{n_1,n_2,\dots, n_p}}.
\]
Thus, $K \subseteq Z$, and $K$ is compact.

For any $x\in K_i$, by \eqref{appendix7}, we have
\begin{equation}\label{overlinemu}
e^{-m_i(x)(s-\|\varphi\|_{\infty})}\leq e^{-m_i(x)s+S_{m_i(x)}\varphi(x)}=\mu_i(\overline{B}(x,\gamma_i))\leq \overline{\mu}(\overline{B}(x,\gamma_i))\leq C\cdot \mu_i(\overline{B}(x,\gamma_i)).
\end{equation}
For $i = 1$, it follows that
\begin{equation}\label{muZ}
1\leq \sum_{x\in K_1}\mu_1(B(x,\gamma_1))\leq \sum_{x\in K_1}\overline{\mu}(B(x,\gamma_1))=\overline{\mu}(Z)\leq C \cdot \sum_{x\in K_1}\mu_1(B(x,\gamma_1))\leq 2C.
\end{equation}

Notice that for any $x\in K_i$ and $z\in \overline{B}(x,\gamma_i)$, by \eqref{appendix5} and \eqref{Zsubseteq}, it follows that $\overline{B}_{m_{i}(x)}(z,\varepsilon)\subseteq \overline{B}(x,\gamma_i/2)$. 
Therefore, by \eqref{Zsubseteq} again, for any $z\in K$ and $i\in \mathbb{N}$, there exists $x\in K_i$ such that $z\in \overline{B}(x,\gamma_i/2)$. 
Hence, by \eqref{overlinemu},
\begin{equation}\label{appendix8}
\overline{\mu}(\overline{B}_{m_i(x)}(z,\varepsilon))\leq \overline{\mu}(\overline{B}(x,\gamma_i/2))\leq \overline{\mu}(\overline{B}(x,\gamma_i))\leq C \cdot e^{-m_i(x)s+S_{m_i(x)}\varphi(x)}.
\end{equation}

Now we normalize $\mu$ as follows: $\mu(B):=\overline{\mu}(B)/\overline{\mu}(K)$ for any $B\in \mathcal{M}(X)$. 
Thus $\mu\in \mathcal{M}(K)$ and $\mu(K)=1$. Combining this with the fact that $P^P(f_{1,\infty},K,\varphi)>s$ and 
\[
  P_{\mu}^P(f_{1,\infty},\varphi)=\lim_{\delta\to0}\inf\{P^P(f_{1,\infty},K,\varphi):\mu(K)\geq 1-\delta\}.
\]
We conclude that $P_{\mu}^P(f_{1,\infty},\varphi) \geq s$. 
Hence, $P_{\mu}^P(f_{1,\infty},\varphi) \geq P^P(f_{1,\infty},K,\varphi)$.

On the other hand, for any $z\in K$, by \eqref{appendix8}, there exists an increasing sequence $\{k_i\}$ such that
\begin{equation}\label{normalized}
\mu(B_{k_i}(z,\varepsilon))\leq C \frac{e^{-k_i s+S_{k_i}\varphi(z)}}{\overline{\mu}(K)}.
\end{equation}

By the definition of upper measure-theoretic pressure (see \Cref{definitiontheoretic}), we have 
\begin{align*}
\overline{P}_{\mu}(f_{1,\infty},K,\varphi)&=\int_K \lim_{\varepsilon\to 0}\limsup_{n\to\infty}\frac{1}{n}\big[-\log \mu(B_n(x,\varepsilon))+S_n\varphi(x)\big]d \mu(x)\\
&\geq \int_K \limsup_{i\to\infty}\frac{1}{k_i}\left[ -\log \mu(B_{k_i}(x,\varepsilon))+S_{k_i}\varphi(x) \right] d \mu(x)\\
& \geq \int_K \limsup_{i\to\infty}\frac{1}{k_i} \left[ -\log \left(C\cdot \frac{e^{-k_i s+S_{k_i}\varphi(z)}}{\overline{\mu}(K)}\right)+S_{k_i}\varphi(x) \right]d \mu(x) \quad \text{(by \eqref{normalized})}\\
&=\int_K \limsup_{i\to\infty}\frac{1}{k_i}\big (-\log C + \log \overline{\mu}(K) + k_i s \big)d \mu(x)\quad (\text{by } \eqref{muZ}, \overline{\mu}(K) \text{ is finite})\\
&=s \cdot\mu(K)=s.
\end{align*}
Thus, we conclude that
\[
  \overline{P}_{\mu}(f_{1,\infty},K,\varphi)\geq P^P(f_{1,\infty},Z,\varphi)\geq P^P(f_{1,\infty},K,\varphi).
\]
Therefore, the proof is complete.
\end{proof}

\begin{theorem}[Variational principle for packing topological pressure]\label{main}
Let $(X,f_{1,\infty})$ be an NDS and $\varphi \in C(X,\mathbb{R})$. If $K$ is a nonempty subset of $X$ with $P^P(f_{1,\infty},K,\varphi)\geq \|\varphi\|_{\infty}$, then
    \begin{align*}
    P^P(f_{1,\infty},K,\varphi)
    &=\sup\{\overline{P}_{\mu}(f_{1,\infty},K,\varphi):\mu\in \mathcal{M}(X),\mu(K)=1 \}\\
    &=\sup\{P_{\mu}^P(f_{1,\infty},\varphi):\mu\in \mathcal{M}(X),\mu(K)=1 \}.
    \end{align*}
\end{theorem}

\begin{proof}
For any $\mu\in \mathcal{M}(X)$ with $\mu(K)=1$, combining \Cref{ppmeasure} and \Cref{pmupp}, we have
\[
P^P(f_{1,\infty},K,\varphi) \geq P_{\mu}^P(f_{1,\infty},\varphi) \geq \overline{P}_{\mu}(f_{1,\infty},\varphi).
\]
Observe that
\[
   \overline{P}_{\mu}(f_{1,\infty},\varphi)=\overline{P}_{\mu}(f_{1,\infty},X,\varphi)\geq \overline{P}_{\mu}(f_{1,\infty},K,\varphi).
\]
Thus, these two inequalities provide a lower bound for $P^P(f_{1,\infty},K,\varphi)$. On the other hand, by \Cref{upperbound}, we obtain the corresponding upper bound. Therefore, the proof is complete.
\end{proof}

\section{Applications}
\subsection{Measure-preserving systems}
Let $(X,\mathcal{B}(X),\mu)$ be a probability space, and let $f_{1,\infty}$ be a sequence of continuous maps from $X$ to itself.
Given $\mu\in \mathcal{M}(X)$, we say that $\mu$ is \emph{$f_{1,\infty}$-invariant} if $\mu(f_k^{-1}(A))=\mu(A)$ for each $f_k\in f_{1,\infty}$ and each Borel set $A \subseteq X$. That is, each map $f_k\in f_{1,\infty}$ preserves the same probability measure $\mu$.
We denote by $\mathcal{M}(X,f_{1,\infty})$ the set of $f_{1,\infty}$-invariant Borel probability measures on $X$. 
For a more detailed discussion of the motivation for studying the $f_{1,\infty}$-invariant measures, we refer the reader to \cite{ZLXZ2012}.

For each $f_k \in f_{1,\infty}$, define the push-forward operator$f_{k}^*:\mathcal{M}(X)\to \mathcal{M}(X)$ by $(f_{k}^*\mu)(B)=\mu(f_{k}^{-1}(B))$ for each Borel set $B \in \mathcal{B}(X)$. By an argument similar to that in \cite{Wal2000a}, one can verify that $f_{k}^*$ is continuous and affine. Moreover, $\mathcal{M}(X,f_{1,\infty})=\{\mu\in \mathcal{M}(X): f_{k}^*\mu=\mu \ \text{for all}\, f_k \in f_{1,\infty} \}$.

\begin{proposition}\label{mxff}
Let $(X,f_{1,\infty})$ be an NDS and let $\mu\in \mathcal{M}(X)$. Then
\begin{enumerate}
    \item[(1)] for every $f_k \in f_{1,\infty}$ and $\varphi\in C(X,\mathbb{R})$,
    \[
    \int \varphi d\big(f_{k}^*\mu\big)=\int \varphi\circ f_k d\mu.
    \]
    \item[(2)] $\mu\in \mathcal{M}(X,f_{1,\infty})$ if and only if, for every $f_k\in f_{1,\infty}$ and $\varphi\in C(X,\mathbb{R})$,
\begin{equation}\label{varphifk}
    \int \varphi\circ f_k d\mu=\int \varphi d\mu.
\end{equation}
 \item[(3)]  $\mathcal{M}(X,f_{1,\infty})$ is compact.
\end{enumerate}
\end{proposition}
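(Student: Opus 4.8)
The plan is to derive (1) from the standard change-of-variables formula for a pushforward measure, deduce (2) from (1) together with the fact that a Borel probability measure on a compact metric space is determined by its integrals against continuous functions, and obtain (3) from the weak$^*$-continuity of each $f_k^*$ recorded just before the statement.

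For (1), I would note that $f_k^*\mu$ is precisely the pushforward of $\mu$ under $f_k$, since $(f_k^*\mu)(B)=\mu(f_k^{-1}(B))$ for every Borel set $B$. The identity $\int\varphi\,d(f_k^*\mu)=\int\varphi\circ f_k\,d\mu$ then holds trivially for $\varphi=\mathbf{1}_B$, extends to simple functions by linearity, to nonnegative Borel functions by the monotone convergence theorem, and to arbitrary $\varphi\in C(X,\mathbb{R})$ by splitting into positive and negative parts; boundedness of $\varphi$ (from compactness of $X$) keeps all integrals finite.

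For (2), the forward direction is immediate: if $f_k^*\mu=\mu$ then (1) gives $\int\varphi\circ f_k\,d\mu=\int\varphi\,d(f_k^*\mu)=\int\varphi\,d\mu$ for all $\varphi\in C(X,\mathbb{R})$. For the converse, (1) turns the hypothesis into $\int\varphi\,d(f_k^*\mu)=\int\varphi\,d\mu$ for every continuous $\varphi$; since $X$ is a compact metric space this forces $f_k^*\mu=\mu$, because a Borel probability measure on such a space is determined by the integrals of continuous functions --- for a closed set $F$ one approximates $\mathbf{1}_F$ from above by the continuous functions $x\mapsto\max\{0,1-n\,d(x,F)\}$, applies dominated convergence to recover $\mu(F)$, and uses that the closed sets generate $\mathcal{B}(X)$. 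Since this applies to every $f_k\in f_{1,\infty}$, we conclude $\mu\in\mathcal{M}(X,f_{1,\infty})$.

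For (3), write $\mathcal{M}(X,f_{1,\infty})=\bigcap_{k\geq1}\{\mu\in\mathcal{M}(X):f_k^*\mu=\mu\}$. Each $f_k^*$ is continuous on $\mathcal{M}(X)$ in the weak$^*$ topology, so if $\mu_n\to\mu$ with $f_k^*\mu_n=\mu_n$ for all $n$, passing to the limit yields $f_k^*\mu=\mu$; thus every set in the intersection is closed, hence so is $\mathcal{M}(X,f_{1,\infty})$, and a closed subset of the weak$^*$-compact space $\mathcal{M}(X)$ is compact. None of the steps is delicate; the only place that uses more than a one-line computation is the converse of (2), where one must invoke that continuous functions determine Borel probability measures on a compact metric space.
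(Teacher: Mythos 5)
Your proof is correct and follows essentially the same route as the paper: parts (1) and (2) are the standard pushforward and Riesz-uniqueness facts (which the paper simply cites from Walters), and part (3) is the same closedness-in-the-weak$^*$-compact-space argument, phrased via continuity of $f_k^*$ rather than via test functions $\varphi\circ f_k$, which amounts to the same computation. No gaps.
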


\begin{proof}
The first two assertions follow directly from \cite{Wal2000a}. 
To prove the third assertion, let $\mu_n$ be a sequence in $\mathcal{M}(X,f_{1,\infty})$ such that $\mu_n \to \mu$. 
By \eqref{varphifk}, for every $\varphi\in C(X,\mathbb{R})$ and every $f_k\in f_{1,\infty}$, we have
\[
\int \varphi \circ f_k d \mu=\lim_{n\to\infty} \int \varphi \circ f_k d \mu_n= \lim_{n\to\infty} \int \varphi d \mu_n= \int \varphi d \mu.
\]
It then follows from assertion (2) that $\mu\in \mathcal{M}(X,f_{1,\infty})$. Therefore $\mathcal{M}(X,f_{1,\infty})$ is compact.
\end{proof}

\begin{proposition}\label{munconverge}
Let $(X,f_{1,\infty})$ be an NDS and $f_n$ converges uniformly to a limit $f$. If $\mu\in \mathcal{M}(X,f_{1,\infty})$, then $\mu\in \mathcal{M}(X,f)$.
\end{proposition}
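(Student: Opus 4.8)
The plan is to reduce the assertion to the functional characterisation of invariance provided by \Cref{mxff}(2): a Borel probability measure $\nu\in\mathcal M(X)$ is invariant under a continuous selfmap $g$ of $X$ if and only if $\int\varphi\circ g\,d\nu=\int\varphi\,d\nu$ for every $\varphi\in C(X,\mathbb{R})$. Thus it suffices to verify, for each fixed $\varphi\in C(X,\mathbb{R})$, that $\int\varphi\circ f\,d\mu=\int\varphi\,d\mu$. (Here one also notes that the uniform limit $f$ of the selfmaps $f_n$ is itself a continuous selfmap of $X$, since $X$ is compact, hence closed, so that $\mathcal M(X,f)$ is meaningful.)

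The key analytic step is the following: since $X$ is a compact metric space, every $\varphi\in C(X,\mathbb{R})$ is uniformly continuous, and combining this with the hypothesis $f_n\to f$ uniformly yields $\varphi\circ f_n\to\varphi\circ f$ uniformly on $X$. Indeed, given $\eta>0$ one first chooses $\delta>0$ with $|\varphi(x)-\varphi(y)|<\eta$ whenever $d(x,y)<\delta$, and then chooses $N$ so that $\sup_{x\in X}d(f_n(x),f(x))<\delta$ for all $n\geq N$, whence $\|\varphi\circ f_n-\varphi\circ f\|_\infty\leq\eta$ for $n\geq N$.

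Finally I would pass to the limit. Since $\mu\in\mathcal M(X,f_{1,\infty})$ and each $f_n$ belongs to $f_{1,\infty}$, \Cref{mxff}(2) gives $\int\varphi\circ f_n\,d\mu=\int\varphi\,d\mu$ for every $n$. Because $\mu$ is a probability measure, the uniform convergence above lets us interchange the limit with the integral:
\[
\int\varphi\circ f\,d\mu=\lim_{n\to\infty}\int\varphi\circ f_n\,d\mu=\int\varphi\,d\mu .
\]
As $\varphi\in C(X,\mathbb{R})$ was arbitrary, \Cref{mxff}(2) yields $\mu\in\mathcal M(X,f)$. The argument is short and I do not anticipate a genuine obstacle; the only point that requires (mild) care is the uniform convergence $\varphi\circ f_n\to\varphi\circ f$, which is exactly where the compactness of $X$ — through uniform continuity of $\varphi$ — is used.
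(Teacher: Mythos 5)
Your proposal is correct and follows essentially the same route as the paper: both use the uniform continuity of $\varphi$ on the compact space $X$ together with the uniform convergence $f_n\to f$ to get $\int\varphi\circ f_n\,d\mu\to\int\varphi\circ f\,d\mu$, then invoke the invariance identity $\int\varphi\circ f_n\,d\mu=\int\varphi\,d\mu$ and the functional characterisation of invariance to conclude. No gaps.
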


\begin{proof}
For any $\varphi\in C(X,\mathbb{R})$, define
\[
w_{\varphi}(\delta)=\sup_{x,y\in X,\,d(x,y)<\delta}\left|\varphi(x)-\varphi(y)\right|.
\]
Given any $\varepsilon>0$, there exists $\delta>0$ such that $w_{\varphi}(\delta)<\varepsilon$.
Note that $f_n$ converges uniformly to $f$, hence $f$ is continuous. 
Moreover, for this $\delta>0$, there exists $N\geq 1$ such that for all $n\geq N$, $\sup_{x\in X}d(f_n(x), f(x))<\delta$.

Let $\mu\in \mathcal{M}(X,f_{1,\infty})$. Then for $n\geq N$,
\[
  \left|\int \varphi \circ f_n d \mu- \int \varphi\circ f d \mu \right|\leq \int \left | \varphi \circ f_n - \varphi\circ f \right |d \mu\leq w_{\varphi}(\delta) < \varepsilon.
\]
This indicates that
\[
  \lim_{n\to\infty} \int \varphi \circ f_n d \mu = \int \varphi\circ f d \mu.
\]
Combining this with \eqref{varphifk}, then
\[
   \int \varphi\circ f d \mu=\lim_{n\to\infty} \int \varphi \circ f_n d \mu =\lim_{n\to\infty} \int \varphi d \mu = \int \varphi d \mu.
\]
Thus $\mu\in \mathcal{M}(X,f)$.
\end{proof}

A point $x\in X$ is said to be \emph{non-wandering} for $f_{1,\infty}$ if for every neighborhood $U(x)$ of $x$, there exist positive integers $n$ and $k$ with $f_n^k(U(x))\cap U(x)\neq \emptyset$.
We denote the set of all non-wandering points of $f_{1,\infty}$ by $\Omega(f_{1,\infty})$.
A set $U\subset X$ is said to be \emph{wandering} if $f_n^k(U)\cap U=\emptyset$ for all integers $n$ and $k$.
A point $x\in X$ is wandering if it belongs to some wandering set \cite{KS1996}.

\begin{proposition}\label{nonwanering}
Let $(X,f_{1,\infty})$ be an NDS on the compact metric space $(X,d)$.
For any $\mu\in \mathcal{M}(X,f_{1,\infty})$, we have $\mu(\Omega(f_{1,\infty}))=1$.
\end{proposition}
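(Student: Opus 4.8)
The plan is to prove the equivalent statement $\mu\bigl(X\setminus\Omega(f_{1,\infty})\bigr)=0$. First I would reduce to a single wandering set: by definition of $\Omega(f_{1,\infty})$, a point lies in $X\setminus\Omega(f_{1,\infty})$ precisely when it has a wandering neighbourhood, and shrinking such a neighbourhood to an open one keeps it wandering (a subset of a wandering set is wandering). Since $X$ is a compact metric space it is second countable, so $X\setminus\Omega(f_{1,\infty})$, being covered by open wandering sets, is by the Lindel\"of property a countable union $\bigcup_{m\geq1}U_m$ of open wandering sets. It therefore suffices to show $\mu(U)=0$ for every wandering Borel set $U$.

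Fix such a $U$ and consider its pullbacks $V_j:=(f_1^j)^{-1}(U)$ for $j\geq0$, with $V_0=U$. Each $V_j$ is Borel since the $f_k$ are continuous, and writing $f_1^j=f_j\circ\cdots\circ f_1$ and applying the invariance $\mu(f_k^{-1}(A))=\mu(A)$ exactly $j$ times yields $\mu(V_j)=\mu(U)$ for all $j$. The crucial point is that the family $\{V_j\}_{j\geq0}$ is pairwise disjoint. Indeed, suppose $0\leq i<j$ and $x\in V_i\cap V_j$, and set $y=f_1^i(x)\in U$. Using the composition identity $f_1^j=f_{i+1}^{j-i}\circ f_1^i$ we get $f_{i+1}^{j-i}(y)=f_1^j(x)\in U$, hence $f_{i+1}^{j-i}(U)\cap U\neq\emptyset$ with positive integers $n=i+1$ and $k=j-i$, contradicting the hypothesis that $U$ is wandering.

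Granting disjointness, the fact that $\mu$ is a probability measure gives
\[
\sum_{j=0}^{\infty}\mu(U)=\sum_{j=0}^{\infty}\mu(V_j)=\mu\Bigl(\bigcup_{j\geq0}V_j\Bigr)\leq\mu(X)=1,
\]
which forces $\mu(U)=0$. Summing over $m$ then gives $\mu\bigl(X\setminus\Omega(f_{1,\infty})\bigr)=0$, that is, $\mu(\Omega(f_{1,\infty}))=1$.

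The measurability of the $V_j$ and the iterated invariance are routine; the only step needing care is the disjointness argument, where one must feed the correctly shifted starting index $n=i+1$ and block length $k=j-i$ into the definition of a wandering set. This is essentially the nonautonomous analogue of the Poincar\'e recurrence argument, and I do not expect any serious obstacle beyond keeping these indices straight.
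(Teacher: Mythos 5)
Your proof is correct and follows essentially the same route as the paper's: cover $X\setminus\Omega(f_{1,\infty})$ by countably many open wandering sets, observe that the preimages $(f_1^j)^{-1}(U)$ of a wandering set are pairwise disjoint and of equal $\mu$-measure by the $f_{1,\infty}$-invariance, and conclude each has measure zero. Your disjointness verification via $f_1^j=f_{i+1}^{j-i}\circ f_1^i$ is exactly the index bookkeeping the paper leaves implicit when it cites Walters.
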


\begin{proof}
Let $\{U_n\}_{n=1}^{\infty}$ be a base of the topology.
By \cite{Wal2000a}, $X\backslash\Omega(f_{1,\infty})$ is the union of those $U_n$ such that $U_n, f_1^{-1}(U_n),f_1^{-2}(U_n),\dots$ are pairwise disjoint.
Then $\mu(U_n)=0$ for every $\mu\in \mathcal{M}(X,f_{1,\infty})$. Thus $\mu(X\backslash\Omega(f_{1,\infty}))=0$ for all $\mu\in \mathcal{M}(X,f_{1,\infty})$.
\end{proof}

\begin{lemma} \label{naz2024}\cite[Theorem C]{Naz2024}
Let $(X,f_{1,\infty})$ be an NDS on a compact metric space $(X,d)$, and let $K$ be a nonempty compact subset of $X$.
Then, for any $\varphi\in C(X,\mathbb{R})$,
\[
P^B(f_{1,\infty},K,\varphi)=\sup\{\underline{P}_{\mu}(f_{1,\infty},\varphi): \mu\in \mathcal{M}(X), \mu(K)=1\}.
\]
\end{lemma}

\begin{corollary}
Let $(X,f_{1,\infty})$ be an NDS and let $\varphi\in C(X,\mathbb{R})$, then
\begin{align*}
P^B(f_{1,\infty},X,\varphi) &= P^B(f_{1,\infty},\Omega(f_{1,\infty}),\varphi),\\
P^P(f_{1,\infty},X,\varphi) &= P^P(f_{1,\infty},\Omega(f_{1,\infty}),\varphi).
\end{align*}
\end{corollary}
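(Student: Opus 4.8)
The easy inclusions $P^B(f_{1,\infty},\Omega(f_{1,\infty}),\varphi)\le P^B(f_{1,\infty},X,\varphi)$ and $P^P(f_{1,\infty},\Omega(f_{1,\infty}),\varphi)\le P^P(f_{1,\infty},X,\varphi)$ are forced by monotonicity (\Cref{propertiesp}(1)) together with $\Omega(f_{1,\infty})\subseteq X$, so the plan is to establish the reverse inequalities. Two preliminary facts enter: first, $\Omega(f_{1,\infty})$ is closed, since its complement is the union of the open wandering sets, hence it is compact; second, by the preceding proposition every $\mu\in\mathcal M(X,f_{1,\infty})$ satisfies $\mu(\Omega(f_{1,\infty}))=1$, i.e.\ every $f_{1,\infty}$-invariant measure is carried by $\Omega(f_{1,\infty})$.

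For the Pesin pressure I would invoke \cite[Theorem C]{Naz2024}. Read as a variational principle expressing $P^B(f_{1,\infty},X,\varphi)$ through $f_{1,\infty}$-invariant measures, it lets one replace each competitor $\mu$ by the same measure regarded as living on the compact set $\Omega(f_{1,\infty})$: the measure-theoretic quantities attached to $\mu$ are built from infima of $P^B(f_{1,\infty},K,\varphi)$ over Borel sets $K$ with $\mu(K)$ close to $1$, and intersecting each such $K$ with $\Omega(f_{1,\infty})$ leaves $\mu(K)$ unchanged while not increasing $P^B$ (again \Cref{propertiesp}(1)). Applying the variational principle on $\Omega(f_{1,\infty})$ then yields $P^B(f_{1,\infty},X,\varphi)\le P^B(f_{1,\infty},\Omega(f_{1,\infty}),\varphi)$.

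For the packing pressure the scheme is parallel, with \Cref{main} playing the role of the variational principle. One may assume $P^P(f_{1,\infty},X,\varphi)\ge\|\varphi\|_\infty$; otherwise, since $P^P\ge P^B$ by \Cref{relationship}(2) and the Pesin case is already settled, a short comparison (or a translation of $\varphi$ by a constant) closes the gap. Then \Cref{main} applied to $X$ and to the compact set $\Omega(f_{1,\infty})$ reduces the claim to showing that $\sup\{P^P_\mu(f_{1,\infty},\varphi):\mu\in\mathcal M(X),\ \mu(X)=1\}$ is not decreased upon restricting to measures carried by $\Omega(f_{1,\infty})$, exactly as in the Pesin case; alternatively, using \Cref{pmupp} one can work with $\overline P_\mu(f_{1,\infty},\cdot,\varphi)$ and \Cref{Billingsley}.

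The main obstacle is precisely this last reduction. The fact available to us, $\mu(\Omega(f_{1,\infty}))=1$, concerns \emph{invariant} measures, whereas the suprema in \Cref{main} (and, on that reading, in the subset variational principle behind the Pesin step) range over all Borel probability measures. The crux is therefore to show that the extremizers may be taken invariant — or at least carried by $\Omega(f_{1,\infty})$ — for instance by passing to Ces\`aro-type limits of the measures produced in \Cref{upperbound}, or by squeezing $P^P(f_{1,\infty},X,\varphi)$ between $P^B$ and $\overline{CP}$ via \Cref{relationship} and controlling each end through \cite[Theorem C]{Naz2024} and the identity $\overline{CP}=P$ of \Cref{relationship}(4). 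A minor additional point is verifying the hypothesis $P^P(f_{1,\infty},X,\varphi)\ge\|\varphi\|_\infty$ needed to invoke \Cref{main}.
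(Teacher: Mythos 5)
Your easy direction (monotonicity via \Cref{propertiesp}(1)) is fine, and your diagnosis of where the difficulty sits is exactly right --- but that diagnosis is not a proof, and the reverse inequalities are never established. The suprema in \Cref{main} range over \emph{all} $\mu\in\mathcal M(X)$ with $\mu(K)=1$, whereas the only fact available about $\Omega(f_{1,\infty})$ is that \emph{invariant} measures give it full mass; since $\{\mu\in\mathcal M(X):\mu(X)=1\}=\mathcal M(X)$ strictly contains $\{\mu:\mu(\Omega(f_{1,\infty}))=1\}$ (a Dirac mass at a wandering point already lies in the first set and not the second), one cannot restrict the supremum on support grounds alone. To close the argument one would have to show either that measures not carried by $\Omega(f_{1,\infty})$ never realize the supremum, or that the extremizing measures produced in \Cref{upperbound} can be replaced by measures (say Ces\`aro limits, which moreover need not converge to elements of $\mathcal M(X,f_{1,\infty})$ without extra hypotheses on $f_{1,\infty}$) carried by $\Omega(f_{1,\infty})$ without decreasing $\overline P_\mu$. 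None of your proposed workarounds is carried out, and the squeeze via $P^B\le P^P\le\overline{CP}=P$ cannot pin down $P^P$ because the two ends need not coincide for NDS. There is also the unaddressed hypothesis $P^P(f_{1,\infty},X,\varphi)\ge\|\varphi\|_\infty$ (and the corresponding one for $\Omega(f_{1,\infty})$) required to invoke \Cref{main} at all.

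For what it is worth, the paper itself supplies no proof of this corollary: it is asserted to follow from \Cref{main} and the cited Theorem~C together with the preceding proposition that invariant measures charge $\Omega(f_{1,\infty})$, which is precisely the non sequitur you flag. So your write-up is at least as detailed as the source, and the obstruction you identify is genuine rather than an artifact of your approach; but judged as a proof of the stated equalities, it is incomplete.
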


\begin{proof}
This result follows directly from \Cref{main}, \Cref{nonwanering}, and \Cref{naz2024}.
\end{proof}

\subsection{The set of generic points}
Let $(X,\mathcal{B}(X),\mu)$ be a probability space, $f_{1,\infty}$ a sequence of continuous selfmaps of $X$, and $\mu\in \mathcal{M}(X,f_{1,\infty})$.
A point $x\in X$ is said to be \emph{generic} for $\mu$ if the sequence of probability measures $\Gamma_n(x)=\frac{1}{n}\sum_{j=0}^{n-1}\delta_{f_1^j(x)}$ converges to $\mu$ in the weak$^*$-topology, where $\delta_y$ denotes the Dirac measure at $y$ \cite{Wan2021b}.
Denote by $G_{\mu}$ the set of all generic points for $\mu$.

Let $n\in \mathbb{N}$ and $F \subseteq \mathcal{M}(X,f_{1,\infty})$ be a neighborhood of $\mu$. Define
\[
X_{n,F}=\{x\in X: \Gamma_n (x)\in F\}, \quad G_{\mu}^m=\{x\in G_{\mu}: \Gamma_n(x) \in F, n\geq m\}.
\]
It is clear that $G_{\mu}=\bigcup_{m \geq 1}G_{\mu}^m$ and $G_{\mu}^m \subseteq X_{n,F}$ for $n \geq m$.

\begin{proposition}
Let $(X,f_{1,\infty})$ be an NDS and $\mu\in \mathcal{M}(X,f_{1,\infty})$.
Then for any $\varphi\in C(X,\mathbb{R})$,
\[
  P^P(f_{1,\infty},G_{\mu},\varphi) \leq \lim_{\varepsilon \to 0} \inf_{\mu\in F} \limsup_{n\to \infty} \frac{1}{n} \log P(f_{1,\infty},X_{n,F},\varphi,\varepsilon,n).
\]
\end{proposition}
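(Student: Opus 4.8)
The plan is to prove, for each neighbourhood $F$ of $\mu$ in $\mathcal{M}(X,f_{1,\infty})$ and each $\varepsilon>0$, the fixed-scale estimate
\[
P^P(f_{1,\infty},G_{\mu},\varphi,\varepsilon)\leq\limsup_{n\to\infty}\frac{1}{n}\log P(f_{1,\infty},X_{n,F},\varphi,\varepsilon,n),
\]
and then to insert $\inf_{\mu\in F}$ and finally let $\varepsilon\to0$, using $P^P(f_{1,\infty},G_{\mu},\varphi)=\lim_{\varepsilon\to0}P^P(f_{1,\infty},G_{\mu},\varphi,\varepsilon)$. Write $a$ for the right-hand side above; if $a=+\infty$ the estimate is vacuous, so assume $a<\infty$. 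The starting observation is that $G_{\mu}=\bigcup_{m\geq1}G_{\mu}^m$, so $\{G_{\mu}^m\}_{m\geq1}$ is a countable cover of $G_{\mu}$ and, directly from the definition of $\mathcal{M}^{\widetilde{P}}$,
\[
\mathcal{M}^{\widetilde{P}}(f_{1,\infty},G_{\mu},\varphi,\varepsilon,s)\leq\sum_{m\geq1}\mathcal{M}^P(f_{1,\infty},G_{\mu}^m,\varphi,\varepsilon,s).
\]
Hence it suffices to show $\mathcal{M}^P(f_{1,\infty},G_{\mu}^m,\varphi,\varepsilon,s)=0$ for every $m$ whenever $s>a$; this is exactly where the countable stability of the packing pressure is used, and it is the reason the stratification by $m$ is available here but not for the classical pressure, where $G_{\mu}$ need not sit inside any single $X_{n,F}$.

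Next I would run a Bowen-type counting estimate on each $G_{\mu}^m$. Fix $s>a$, pick $\eta>0$ with $a+\eta<s$, and choose $N_0$ with $P(f_{1,\infty},X_{n,F},\varphi,\varepsilon,n)\leq e^{n(a+\eta)}$ for all $n\geq N_0$. For $N\geq\max\{N_0,m\}$ take any finite or countable pairwise disjoint family $\{\overline{B}_{n_i}(x_i,\varepsilon)\}_i$ with $x_i\in G_{\mu}^m$ and $n_i\geq N$, and regroup the indices by the common value $n$ of $n_i$. For fixed $n$ the centres $\{x_i:n_i=n\}$ are $(n,\varepsilon)$-separated — if $d_n(x_i,x_j)\leq\varepsilon$ then $x_j$ lies in $\overline{B}_n(x_i,\varepsilon)\cap\overline{B}_n(x_j,\varepsilon)$, contradicting disjointness — and each of them lies in $X_{n,F}$, since $n\geq N\geq m$ forces $\Gamma_n(x_i)\in F$ by definition of $G_{\mu}^m$. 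Therefore $\sum_{i:n_i=n}e^{S_n\varphi(x_i)}\leq P(f_{1,\infty},X_{n,F},\varphi,\varepsilon,n)\leq e^{n(a+\eta)}$, and summing the geometric series over $n\geq N$,
\[
\sum_i e^{-sn_i+S_{n_i}\varphi(x_i)}=\sum_{n\geq N}e^{-sn}\sum_{i:n_i=n}e^{S_n\varphi(x_i)}\leq\sum_{n\geq N}e^{-n(s-a-\eta)}=\frac{e^{-N(s-a-\eta)}}{1-e^{-(s-a-\eta)}}.
\]
Taking the supremum over such families bounds $\mathcal{M}^P(f_{1,\infty},G_{\mu}^m,\varphi,\varepsilon,s,N)$ by the last quantity, which tends to $0$ as $N\to\infty$, so $\mathcal{M}^P(f_{1,\infty},G_{\mu}^m,\varphi,\varepsilon,s)=0$.

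Combining the two displays gives $\mathcal{M}^{\widetilde{P}}(f_{1,\infty},G_{\mu},\varphi,\varepsilon,s)=0$ for all $s>a$, hence $P^P(f_{1,\infty},G_{\mu},\varphi,\varepsilon)\leq a$; since this holds for every neighbourhood $F$ and every $\varepsilon>0$, taking $\inf_{\mu\in F}$ and then $\varepsilon\to0$ finishes the proof. I do not expect a serious obstacle — this is a routine covering estimate — but two points need care. First, the order of the limiting operations: the inequality must be secured at each fixed $\varepsilon$ and $F$ before taking $\inf_F$, so that the dependence of $G_{\mu}^m$ on $F$ is harmless, and $\varepsilon\to0$ is taken only at the very end. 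Second, the scale bookkeeping: disjointness of the closed Bowen balls sharing a level already yields an $(n,\varepsilon)$-separated set, so no factor in $\varepsilon$ is lost and exactly the same $\varepsilon$ appears on both sides of the estimate.
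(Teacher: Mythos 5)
Your proposal is correct and follows essentially the same route as the paper: stratify $G_{\mu}=\bigcup_{m\geq1}G_{\mu}^m$, observe that a pairwise disjoint family of closed Bowen balls at a common level $n\geq m$ with centres in $G_{\mu}^m$ yields an $(n,\varepsilon)$-separated subset of $X_{n,F}$, and control the sum over levels by a geometric series. The only difference is presentational — you bound $\mathcal{M}^P(f_{1,\infty},G_{\mu}^m,\varphi,\varepsilon,s)$ directly for $s$ above the right-hand side, whereas the paper argues contrapositively by extracting one good level $j$ from a family with total weight exceeding $1$ — and your bookkeeping of the order of the limits in $N$, $F$ and $\varepsilon$ is, if anything, cleaner.
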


\begin{proof}
We prove this result by \Cref{relationship}. 
For any $s<t< P^P(f_{1,\infty},G_{\mu}^m,\varphi)$, there exists $\gamma>0$ such that $P^P(f_{1,\infty},G_{\mu}^m,\varphi,\varepsilon)>t$ for all $\varepsilon\in (0,\gamma)$.
Consequently,
\[
  \mathcal{M}^{\widetilde{P}}(f_{1,\infty},G_{\mu}^m,\varphi,\varepsilon,t)=+\infty,
\]
which implies that $\mathcal{M}^P(f_{1,\infty},G_{\mu}^m,\varphi,\varepsilon,t)=+\infty$, and hence $\mathcal{M}^P(f_{1,\infty},G_{\mu}^m,\varphi,\varepsilon,t,N)=+\infty$ for all sufficiently large $N$.

Therefore, there exists a finite or countable collection of pairwise disjoint closed balls $\{\overline{B}_{n_i}(x_i,\varepsilon)\}$ with $x_i\in G_{\mu}^m$ such that
\[
  \sum_i e^{-sn_i+S_{n_i}\varphi(x_i)}>1,\quad \forall n_i \geq N.
\]
For each $j \geq N \geq m$, define $G_{\mu}^{m,j}=\{x_i\in G_{\mu}^m: n_i=j\}$. 
Then
\[
  \sum_{j \geq N}\sum_{x\in G_{\mu}^{m,j}}e^{-tj+S_k \varphi(x)}=\sum_{i \geq 1} e^{-sn_i+S_{n_i}\varphi(x_i)}>1.
\]
Hence, there exists $j \geq N$ such that
\[
  \sum_{x\in G_{\mu}^{m,j}}e^{-tj+S_k \varphi(x)} \geq e^{js}(1-e^{s-t}).
\]
Since the balls $\{\overline{B}_{n_i}(x_i,\varepsilon)\}$ are pairwise disjoint, the set $G_{\mu}^{m,j}$ is a $(j,\varepsilon)$-separated subset of $X_{j,F}$. 
It follows that
\[
  P(f_{1,\infty},X_{j,F},\varphi,\varepsilon,j) \geq e^{js}(1-e^{s-t}).
\]
Similar to \eqref{ejt}, we obtain
\[
  \limsup_{n\to \infty} \frac{1}{n}\log P(f_{1,\infty},X_{n,F},\varphi,\varepsilon,n) \geq s.
\]
By the arbitrariness of $s$ with $s<P^P(f_{1,\infty},G_{\mu}^m,\varphi)$, we conclude that
\[
  P^P(f_{1,\infty},G_{\mu}^m,\varphi)\leq \limsup_{n\to \infty} \frac{1}{n}\log P(f_{1,\infty},X_{n,F},\varphi,\varepsilon,n).
\]
Since $G_{\mu}=\bigcup_{m \geq 1} G_{\mu}^m$, it follows that 
\[
  P^P(f_{1,\infty},G_{\mu},\varphi)=\sup_m P^P(f_{1,\infty},G_{\mu}^m,\varphi)\leq \limsup_{n\to \infty} \frac{1}{n}\log P(f_{1,\infty},X_{n,F},\varphi,\varepsilon,n).
\]
Finally, letting $\varepsilon\to 0$, we have
\[
  P^P(f_{1,\infty},G_{\mu},\varphi) \leq \lim_{\varepsilon \to 0} \inf_{\mu\in F} \limsup_{n\to \infty} \frac{1}{n} \log P(f_{1,\infty},X_{n,F},\varphi,\varepsilon,n).
\]
\end{proof}

This proposition provides an upper bound for the packing topological pressure of the set of generic points.
However, this bound is far from satisfactory, as neither the Birkhoff ergodic theorem nor the Shannon-McMillan-Breiman theorem is available for NDS.
To the best of our knowledge, these theorems do not hold in general for NDS.
\section*{Acknowledgements}
The author would like to express great thanks to Prof. Li Jian and Dr. Carllos Eduardo Holanda for their kind comments and suggestions, and also thank the respected reviewers for their valuable comments.
This author is supported by the National Natural Science Foundation of China (No. 12271185) and the Basic and Applied Basic Research Foundation of Guangdong Province (Nos. 2022A1515011124 and 2023A1515110084).

\end{document}